\newtheorem{mytheorem}{Theorem}[subsection]
\newtheorem{mydefinition}{Definition}[subsection]
\newtheorem{mycorollary}{Corollary}[subsection]
\newtheorem{myremark}{Remark}[subsection]
\newtheorem{myassumption}{Assumption}
\newtheorem{mylemma}{Lemma}[subsection]
\newtheorem{aplemma}{Lemma}[section]
\newcolumntype{R}[1]{>{\raggedleft\let\newline\\\arraybackslash\hspace{0pt}}m{#1}}
\newcommand\numberthis{\addtocounter{equation}{1}\tag{\theequation}}
\begin{document}


\title[Third and Fourth Cumulants in ICA]{Joint Use of Third and Fourth Cumulants in Independent Component Analysis}
\author{J. Virta, K. Nordhausen \and H. Oja}
\address{Department of Mathematics and Statistics \\ University of Turku, FIN-20014, Finland}





\keywords{Skewness, Kurtosis, FastICA, FOBI, JADE}
\subjclass[2000]{Primary 62H10; Secondary 62H12}
\maketitle

\begin{abstract}
The independent component model is  a latent variable model where the components of the observed random vector are linear combinations of latent independent variables. The aim is to find an estimate for a transformation matrix back to independent components. In moment-based approaches third cumulants are often  neglected in favour of fourth cumulants, even though both approaches have similar appealing properties. This paper considers the joint use of third and fourth cumulants in finding independent components. First, univariate cumulants are used as projection indices in search for independent components (projection pursuit). Second, multivariate  cumulant matrices are jointly used to solve the problem. The properties of the estimates are considered in detail through corresponding optimization problems,  estimating equations, algorithms  and asymptotic statistical properties. Comparisons of the asymptotic variances of different estimates in wide independent component models show that in most cases symmetric projection pursuit approach using both third  and fourth squared cumulants is a safe choice.
\end{abstract}




\section{Introduction}

In this paper we consider the use of third and fourth cumulants in independent component analysis (ICA).
The basic {\it blind source separation  model} assumes that the observed vectors $\textbf{x}_i \in \mathbb{R}^p$ are  linear combinations of some latent unobservable  variables $\textbf{z}_i \in \mathbb{R}^p$, $i=1,...,n$, the recovering of which is the objective of the analysis. If we write
\[
\textbf{X}=(\textbf{x}_1,...,\textbf{x}_n)^T\in \mathbb{R}^{n \times p}\ \ \mbox{and}\ \ \textbf{Z}=( \textbf{z}_1,...,\textbf{z} _n)^T\in \mathbb{R}^{n \times p}
\]
we have a semiparametric model
\[
\textbf{X}  =\mathbf{1}_n \boldsymbol{\mu}^T+ \textbf{Z} \mathbf{\Omega}^T
\]
with a shift vector $\boldsymbol{\mu} $ and  a non-singular transformation matrix or {\it mixing matrix} $\boldsymbol{\Omega} \in \mathbb{R}^{p \times p}$.
In the independent component model the columns of $\textbf{Z} $ are assumed to be independent, and in the most classical model it is further assumed that the rows
of $\textbf{Z} $, that is, $\textbf{z}_1,...,\textbf{z} _n$ are independent and identically distributed, each having $p$ independent components.
The model has the intuitive interpretation of $p$ hidden independent signals, the properties of which we observe only through an unknown linear mixing process. 

{\it Projection pursuit (PP)} is  a popular method to reveal hidden structures in the data
by searching for low-dimensional orthogonal projections of interest. This is done by finding one or several linear combinations of the original variables
that maximize the value of an objective function, the so-called {\it projection index}. The classical measures of skewness and kurtosis, the third and fourth moments of a random variable after standardization, have been widely used for this purpose. \cite {Huber:1985} considered projection indices
with heuristic arguments that non-gaussian linear combinations are most interesting. His indices were ratios of two dispersion functionals thus  measuring kurtosis, with the classical kurtosis measure as a special case.  \cite{pena2001cluster} used projection pursuit for hidden cluster identification, again with the classical kurtosis measure. For early contributions on projection pursuit, see also \cite{FriedmanTukey:1974} and  \cite{jones1987projection}.  In the engineering literature, \cite{HyvarinenOja:1997} were the first to propose a projection pursuit approach  for independent component analysis with  the absolute value of the excess kurtosis, the fourth cumulant of a standardized variable, as a projection index and  considered later an extension with a choice among several alternative measures of non-gaussianity including the absolute value of the classical skewness, namely, the third cumulant of a standardized random variable. The approach is called {\it deflation-based  FastICA} or {\it symmetric FastICA} depending on whether the independent components are found one-by-one or simultaneously.  FastICA  is perhaps the most popular approach for the ICA problem in engineering applications. Recently, \cite{miettinen2014fourth} surveyed and discussed in detail the statistical properties of unmixing matrix estimates based on the use of the absolute value of the excess kurtosis as a projection index.

The concept and measures of kurtosis have been extended to the multivariate case as well. The
classical skewness and kurtosis measures by~\citet{Mardia:1970}, for example, combine in a natural
way the third and fourth moments of a standardized multivariate variable. Mardia's measures are
invariant under affine transformations. For other combinations of standardized third and fourth
moments, see also \cite{MoriRohtaguSzekely:1994,Kollo:2008}. In the invariant coordinate selection (ICS)
\citep{TylerCritchleyDumbgenOja:2009} one finds, using two scatter matrices, an unmixing matrix such that
the back-transformed variables are  presented in an invariant coordinate system, standardized
and ordered according their (generalized) kurtosis.   In independent component
analysis, certain scatter matrices based on fourth moments and the covariance matrix are used together in a similar
way to find the transformations to independent components; e.g. {\it fourth order blind identification (FOBI)} by~\citet{cardoso1989source} and
{\it joint approximate diagonalization of eigen-matrices (JADE)} by~\citet{cardoso1993blind} are regularly used in independent component analysis.
\cite{miettinen2014fourth} give a detailed survey of FOBI and JADE estimates with a comparison to deflation-based and symmetric projection pursuit estimates that use the absolute value of the excess kurtosis as a projection index. \cite{PenaPrietoViladomat:2010} use a fourth moment kurtosis matrix to reveal  cluster structures in the data. Similarly \cite{Loperfido:2013,Loperfido:2015} apply multivariate skewness measures for this purpose.

Independent component analysis has so far been mainly developed in the engineering literature and seen as a computational tool
to decompose a multivariate signal into independent non-gaussian signals. The procedures are then considered as numerical algorithms
rather than estimates of certain population quantities and considering their statistical properties has been neglected.  Recently, statisticians have
become interested in the problem. \citet{ChenBickel:2006} and \citet{SamworthYuan:2012} for example developed estimates that need only the existence of first moments and  rely on efficient nonparametric estimates of the marginal densities. Efficient estimation methods based on residual signed ranks and residual ranks have been developed recently by \citet{IlmonenPaindaveine:2011} and
\citet{HallinMehta:2015}.  


As far as the authors know, this paper introduces for the first time several ICA procedures that jointly use third and fourth cumulants. Only in the case of the JADE-type approach of Section \ref{subsec:jade} has this been done before, see \citet{moreau2001generalization}. First, weighted sums of squared third and fourth cumulants are used as projection indices in search for independent components (deflation-based and symmetric PP).  In most cases, our estimates then outperform the classical FastICA estimates that  use either absolute values of the third cumulants or absolute values of the fourth cumulants. Second, multivariate third and fourth cumulant matrices are jointly used to find an unmixing matrix estimate. Our approach is again novel in the sense that it uses also the multivariate third cumulant matrices. The classical FOBI and JADE estimates are found as special cases. The properties of the estimates are considered in detail through corresponding optimization problems,  estimating equations, algorithms  and asymptotic statistical properties. Comparisons of the asymptotic variances of different estimates in wide independent component models with skew, heavy- and light-tailed marginal distributions show that in most cases symmetric projection pursuit approach using third cumulants only outperforms its competitors.

The paper is structured as follows. We first introduce some helpful notation in  Section \ref{sec:nota}. After introducing the independent component (IC) model with relevant assumptions in Section \ref{sec:ICmodel}, the unmixing matrix estimates based on the projection pursuit approach  and those based on the multivariate cumulant matrices are discussed in detail in Sections  \ref{sec:uni} and \ref{sec:multi}, respectively. In Section \ref{sec:simu} the procedures are first compared in the case of cluster identification (using only one independent component)
 and then in the general case of $p$ independent components. 
We end with some discussion on the results and their importance in Section \ref{sec:conc}. The proofs are reserved for the Appendix.

\section{Notation} \label{sec:nota}

For a univariate random variable $x$, we write $x_{st}=(x-E(x))/\sqrt{Var(x)}$ for its standardized version. The classical skewness, kurtosis and excess kurtosis of $x$ are then
\[
\gamma(x)=E\left(x_{st}^3\right),\ \ \beta(x)=E\left(x_{st}^4\right)\ \ \mbox{and}\ \
\kappa(x)=\beta(x)-3.
\]
Note that the measures $\gamma(x)$ and $\kappa(x)$ are the third and fourth cumulants of the standardized variable $x_{st}$. For symmetrical random variables $\gamma(x)=0$ and
for the normal distribution $\kappa(x)=0$.

 Throughout the paper we assume that $ \textbf{z}_1,...,\textbf{z}_n$ is a random sample from a $p$-variate distribution of $\textbf{z}$  with $E(\textbf{z})= \mathbf{0}$ and $Cov(\textbf{z})= \textbf{I}_p$ and that the $p$ components of $\textbf{z}$ are mutually independent.
 As different moment-based quantities play a crucial role in our derivations, we have the  shorthands 
\[  E(z_{ik}^3) =: \gamma_k, \quad E(z_{ik}^4) =: \beta_k  \quad \mbox{and}\  E(z_{ik}^4) - 3 =: \kappa_k,\ \ k=1,...,p.\]
For all $k=1,...,p$, the  moment-based expressions
\[E(z_{ik}^6) - E(z_{ik}^3)^2 =: \omega_k, \quad E(z_{ik}^4) - 1 =: \nu_k \quad \text{and} \quad E(z_{ik}^5) - E(z_{ik}^3) =: \eta_k \]
are encountered numerous times in the expressions for the asymptotic variances of our estimates and thus deserve symbols of their own.
The limiting distributions of our unmixing matrix estimates depend on the joint limiting distributions of
\begin{alignat*}{3}
\sqrt{n} \hat{s}_{kl} &= \dfrac{1}{\sqrt{n}}\sum_{i=1}^n z_{ik} z_{il}, \qquad & \\
\sqrt{n} \hat{r}_{kl} &= \dfrac{1}{\sqrt{n}}\sum_{i=1}^n (z_{ik}^2 - 1) z_{il}, \qquad & \sqrt{n} \hat{r}_{mkl} &= \frac{1}{\sqrt{n}} \sum_{i=1}^n z_{im} z_{ik} z_{il}, \\
\sqrt{n} \hat{q}_{kl} &= \frac{1}{\sqrt{n}} \sum_{i=1}^n (z_{ik}^3 - \gamma_k) z_{il} \qquad \mbox{and} \qquad & \sqrt{n} \hat{q}_{mkl} &= \frac{1}{\sqrt{n}} \sum_{i=1}^n z_{im}^2 z_{ik} z_{il}.
\end{alignat*}
Central limit theorem can be used to prove the joint limiting multinormality of these statistics with the  variances and covariances as listed in Table \ref{tab:cov}.


\begin{table}[h]
\caption{Covariances of the column and row entries, for $k \neq l \neq m \neq m'$.}
\label{tab:cov}
\begin{center}
\begin{tabular}{| r || R{1cm} | R{1cm} | R{1cm} | R{1cm} |  R{1cm} | R{1cm} | R{1cm} | R{1cm} | R{1cm} |}
\hline
 & $\sqrt{n}\hat{q}_{kl}$ & $\sqrt{n}\hat{q}_{lk}$ & $\sqrt{n}\hat{r}_{kl}$ & $\sqrt{n}\hat{r}_{lk}$ & $\sqrt{n} \hat{q}_{m'kl}$ & $\sqrt{n} \hat{r}_{mkl}$ & $\sqrt{n}\hat{s}_{kl}$ \\ \hline \hline
$\sqrt{n}\hat{q}_{kl}$ & $\omega_k$ & $\beta_k \beta_l$ & $\eta_k$ & $\beta_k \gamma_l$ & $\beta_k$ & 0 & $\beta_k$ \\ \hline
$\sqrt{n}\hat{q}_{lk}$ &  & $\omega_l$ & $\beta_l \gamma_k$ & $\eta_l$ & $\beta_l$ & 0 & $\beta_l$ \\ \hline
$\sqrt{n}\hat{r}_{kl}$ & & & $\nu_k$ & $\gamma_k \gamma_l$ & $\gamma_k$ & 0 & $\gamma_k$ \\ \hline
$\sqrt{n}\hat{r}_{lk}$ & & & & $\nu_l$ & $\gamma_l$ & 0 & $\gamma_l$ \\ \hline
$\sqrt{n}\hat{q}_{m'kl}$ &  & & & & $\beta_m$ & 0 & 1 \\ \hline
$\sqrt{n}\hat{r}_{mkl}$ &  & & & & & 1 & 0  \\ \hline
$\sqrt{n}\hat{s}_{kl}$ &  & & & & & & 1 \\
\hline
\end{tabular}
\end{center}
\end{table}

For a $p$-variate random vector $\textbf{x}$ with mean vector $\boldsymbol{\mu}$ and covariance matrix $\boldsymbol{\Sigma}$, the standardized vector is  $\textbf{x}_{st} = \boldsymbol{\Sigma}^{-1/2} (\textbf{x} - \boldsymbol{\mu})$, where $\boldsymbol{\Sigma}^{-1/2}$ is chosen as the  symmetric matrix $\textbf{G}$ satisfying $\textbf{G} \boldsymbol{\Sigma} \textbf{G} = \textbf{I}_p$. A useful result (see for example \cite{ilmonen2012invariant}) regarding the standardized observations is that if $\textbf{x}^* = \textbf{A} \textbf{x} + \textbf{b}$, then $\textbf{x}^*_{st} = \textbf{U} \textbf{x}_{st}$, for some orthogonal matrix $\textbf{U} \in \mathbb{R}^{p \times p}$. This fact is used in proving the affine equivariances of the different functionals later on. Additionally, the centered observations are in the proofs denoted with $\tilde{\textbf{z}}_i := \textbf{z}_i - \bar{\textbf{z}}$ for clarity.

The standard basis vectors of $\mathbb{R}^p$ are denoted by $\textbf{e}_i \in \mathbb{R}^p$. That is, the $j$th element of $\textbf{e}_i$ is equal to Kronecker's delta $\delta_{ij} = I(i = j)$. Using the standard basis vectors we further define the following matrices
\[\textbf{E}^{ij} = \textbf{e}_i \textbf{e}_j^T, \qquad i,j=1,...,p,\]
the only non-zero element of $\textbf{E}^{ij}$ being the element $(i, j)$. Finally, some often encountered sets of matrices are denoted with symbols of their own:
\begin{itemize}
\item $\mathcal{U} = \{\textbf{U} \in \mathbb{R}^{p \times p} \,: \, \textbf{U} \text{ is an orthogonal matrix.}\}$
\item $\mathcal{J} = \{\textbf{J} \in \mathbb{R}^{p \times p}\,: \, \textbf{J} = diag(j_1,...,j_p), \, j_1,...,j_p = \pm 1\}$
\item $\mathcal{D} = \{\textbf{D} \in \mathbb{R}^{p \times p}\,: \, \textbf{D} = diag(d_1,...,d_p), \, d_1,...,d_p > 0\}$
\item $\mathcal{P} = \{\textbf{P} \in \mathbb{R}^{p \times p}\,: \, \textbf{P} \text{ is a permutation matrix.}\}$
\end{itemize}

\section{Independent component model}\label{sec:ICmodel}

The model used throughout the paper is the independent component model (IC model), in which the $p$-variate observations $\textbf{x}_1,..., \textbf{x}_n$ are thought to originate as
\begin{align}\label{eq:icm_icmodel}
\textbf{x}_i = \boldsymbol{\mu} + \boldsymbol{\Omega} \textbf{z}_i, \qquad i=1,...,n,
\end{align}
where the unobserved, independent and identically distributed vectors $\textbf{z}_i=(z_{i1},...,z_{ip})^T$ satisfy the following three assumptions.

\begin{myassumption}\label{assu:ind}
$z_{i1},...,z_{ip}$  are standardized and mutually independent.
\end{myassumption}


\begin{myassumption}\label{assu:gaus}
At most one of $z_{i1},...,z_{ip}$ is normally distributed.
\end{myassumption}
 The conditions $E(z_{ik})=0$ and $E(z_{ik}^2)=1$, $k=1,...,p$, in Assumption  \ref{assu:ind} just serve as identification constraints for the location $\boldsymbol{\mu}$ and the lengths of the rows of $\mathbf{\Omega}$.
  Then $$E(\textbf{x}_i)=\boldsymbol{\mu}\ \ \mbox{and}\ \ Cov(\textbf{x}_i)=\boldsymbol{\Sigma} = \boldsymbol{\Omega} \boldsymbol{\Omega}^T.$$  To see why Assumption \ref{assu:gaus} has to hold, consider the case $\textbf{z}_i \sim \mathcal{N}_2(\textbf{0}, \textbf{I}_p)$. Then any orthogonal transformation preserves the distribution of $\textbf{z}_i$, that is, $\textbf{z}_i \sim \textbf{U} \textbf{z}_i$ for all $\textbf{U} \in \mathcal{U}$, and we can recover the original $\textbf{z}_i$ only up to some orthogonal matrix $\textbf{U}$. Regarding the uniqueness of the independent components
after our assumptions, it is easy to see that the signs and the order of the independent components are not fixed  in the model.  This, however, is satisfactory in most applications.

Additionally, we introduce the following six assumptions, each of which is a stricter version of Assumption \ref{assu:gaus} and implicitly assumes that the third and fourth moments exist.
This hence rules out heavy-tailed distributions.
The relevance of these assumptions will become apparent in later discussions on the existence and properties of different unmixing matrix functionals. Recall that
\[
\gamma_k=\gamma (z_{ik})=E(z_{ik}^3) \ \ \mbox{and} \ \ \kappa_k=\kappa(z_{ik})=E(z_{ik}^4)-3 ,\ \ k=1,...,p.
\]

\begin{myassumption}\label{assu:zeroskew}
At most one of $\gamma_1,...,\gamma_p$ is zero.
\end{myassumption}

\begin{myassumption}\label{assu:zerokurt}
At most one of $\kappa_1,...,\kappa_p$ is zero.
\end{myassumption}

\begin{myassumption}\label{assu:distskew}
$\gamma_1,...,\gamma_p$ are distinct.
\end{myassumption}

\begin{myassumption}\label{assu:distkurt}
$\kappa_1,...,\kappa_p$ are distinct.
\end{myassumption}

\begin{myassumption}\label{assu:zeroboth}
For at most one $k$, $\gamma_k=\kappa_k=0$.
\end{myassumption}

\begin{myassumption}\label{assu:distboth}
There is no  $k\ne l$ such that $\gamma_k=\gamma_l$ and $\kappa_k=\kappa_l$.
\end{myassumption}

Assumption \ref{assu:zeroskew} is often considered to be much more restrictive
than Assumption \ref{assu:zerokurt} as it limits the number of symmetric sources to one. 
The assumption of symmetric sources is made in \cite{IlmonenPaindaveine:2011}. Their approach allows however heavy-tailed distributions as the existence of moments is not assumed. Note also that Assumptions \ref{assu:distskew}, \ref{assu:distkurt} and \ref{assu:distboth} rule out components with identical marginal distributions.

The structure of the  assumptions is depicted in Figure~\ref{fig:icm_assum}. From the graph we again see that all the ``moment-based assumptions'' are stronger than  Assumption \ref{assu:gaus} and  the most stringent amongst them are Assumptions \ref{assu:distskew} and \ref{assu:distkurt}.

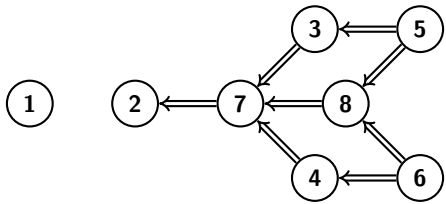
\begin{figure}[h]
\begin{center}
\begin{tikzpicture}[auto,node distance=1.4cm,
  thick,main node/.style={circle,draw,font=\sffamily\small\bfseries}]
\node[main node] (1) {1};
\node[main node] (2) [right of=1] {2};
\node[main node] (7) [right of=2] {7};
\node[main node] (3) [above right of=7] {3};
\node[main node] (4) [below right of=7] {4};
\node[main node] (5) [right of=3] {5};
\node[main node] (6) [right of=4] {6};
\node[main node] (8) [below left of=5] {8};

\draw[double,double equal sign distance,-implies] (3) to (7);
\draw[double,double equal sign distance,-implies] (4) to (7);
\draw[double,double equal sign distance,-implies] (7) to (2);
\draw[double,double equal sign distance,-implies] (8) to (7);
\draw[double,double equal sign distance,-implies] (5) to (3);
\draw[double,double equal sign distance,-implies] (6) to (4);
\draw[double,double equal sign distance,-implies] (5) to (8);
\draw[double,double equal sign distance,-implies] (6) to (8);
\end{tikzpicture}

\caption{The relationships and implications between the different assumptions on the independent components.}
\label{fig:icm_assum}
\end{center}
\end{figure}

Next we state one of the key results of independent component analysis, the proof of which can be found, e.g., in \cite{miettinen2014fourth}.

\begin{mytheorem}\label{theo:icm_rotate}
Let $\textbf{x} \in \mathbb{R}^p$ follow the independent component model in \eqref{eq:icm_icmodel}. Then the standardized observations $\textbf{x}_{st} = \boldsymbol{\Sigma}^{-1/2} (\textbf{x} - \boldsymbol{\mu})$ 
satisfy $\textbf{z} = \textbf{U} \textbf{x}_{st}$ for some orthogonal matrix $\textbf{U}$.
\end{mytheorem}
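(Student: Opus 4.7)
The plan is straightforward: compute $\textbf{x}_{st}$ explicitly in terms of $\textbf{z}$ using the model equation, and then verify that the linear map between them is orthogonal. Nothing subtle is going on — this is essentially a sanity check that the whitening step of ICA reduces the problem to finding an orthogonal rotation.

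First I would substitute $\textbf{x} = \boldsymbol{\mu} + \boldsymbol{\Omega}\textbf{z}$ into the definition of $\textbf{x}_{st}$, yielding
\[
\textbf{x}_{st} = \boldsymbol{\Sigma}^{-1/2}(\textbf{x} - \boldsymbol{\mu}) = \boldsymbol{\Sigma}^{-1/2}\boldsymbol{\Omega}\,\textbf{z}.
\]
Set $\textbf{V} := \boldsymbol{\Sigma}^{-1/2}\boldsymbol{\Omega}$. The claim reduces to showing that $\textbf{V}$ is orthogonal, since then $\textbf{U} := \textbf{V}^T$ satisfies $\textbf{U}\textbf{x}_{st} = \textbf{V}^T\textbf{V}\textbf{z} = \textbf{z}$.

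Next I would compute $\textbf{V}\textbf{V}^T$ directly. Using Assumption \ref{assu:ind} (so that $Cov(\textbf{z}) = \textbf{I}_p$) we have $\boldsymbol{\Sigma} = Cov(\textbf{x}) = \boldsymbol{\Omega} Cov(\textbf{z}) \boldsymbol{\Omega}^T = \boldsymbol{\Omega}\boldsymbol{\Omega}^T$, and therefore
\[
\textbf{V}\textbf{V}^T = \boldsymbol{\Sigma}^{-1/2}\boldsymbol{\Omega}\boldsymbol{\Omega}^T\boldsymbol{\Sigma}^{-1/2} = \boldsymbol{\Sigma}^{-1/2}\boldsymbol{\Sigma}\boldsymbol{\Sigma}^{-1/2} = \textbf{I}_p,
\]
where in the last equality I use that $\boldsymbol{\Sigma}^{-1/2}$ is by definition the symmetric matrix $\textbf{G}$ with $\textbf{G}\boldsymbol{\Sigma}\textbf{G} = \textbf{I}_p$ (so in particular $\boldsymbol{\Sigma}^{-1/2}$ commutes with $\boldsymbol{\Sigma}^{1/2}$ in the required sense). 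This exhibits $\textbf{V}$ as an orthogonal matrix, completing the argument.

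There is no real obstacle; the only thing one has to be a little careful about is invertibility of $\boldsymbol{\Omega}$ (hence of $\boldsymbol{\Sigma}$), which is guaranteed by the non-singularity assumption in the model definition, and the symmetry convention for $\boldsymbol{\Sigma}^{-1/2}$ recorded in Section \ref{sec:nota}. The result itself is the cornerstone that justifies the standard two-stage ICA strategy used throughout the rest of the paper: first whiten to $\textbf{x}_{st}$, then search among orthogonal matrices $\textbf{U}$ for the one that recovers the independent components.
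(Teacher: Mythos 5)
Your proof is correct and is exactly the standard argument: the paper itself does not prove this theorem but defers to \cite{miettinen2014fourth}, where the proof proceeds precisely as you do, by writing $\textbf{x}_{st}=\boldsymbol{\Sigma}^{-1/2}\boldsymbol{\Omega}\textbf{z}$ and checking that $\textbf{V}=\boldsymbol{\Sigma}^{-1/2}\boldsymbol{\Omega}$ satisfies $\textbf{V}\textbf{V}^T=\boldsymbol{\Sigma}^{-1/2}\boldsymbol{\Omega}\boldsymbol{\Omega}^T\boldsymbol{\Sigma}^{-1/2}=\textbf{I}_p$. No gaps; your attention to the symmetric-square-root convention and the non-singularity of $\boldsymbol{\Omega}$ covers the only points that need care.
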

Theorem \ref{theo:icm_rotate} essentially states that the estimation of the unmixing matrix $\boldsymbol{\Omega}^{-1}$ can in fact be reduced to a simpler task, namely to the estimation of an orthogonal  matrix $\textbf{U}$.  This result is used repeatedly in the following sections.

Finally, we define the independent component functional $\textbf{W}(F)$ as follows.
\begin{mydefinition}
The functional $\textbf{W}(F) \in \mathbb{R}^{p \times p}$ is said to be an independent component functional if (i) $\textbf{W}(F_\textbf{x}) \textbf{x}$ has independent components under the independent component model \eqref{eq:icm_icmodel} and (ii) $\textbf{W}(F_\textbf{x})$ is affine equivariant in the sense that
for all $\textbf{x} $, all full-rank $\textbf{A} \in \mathbb{R}^{p \times p}$ and $\textbf{b} \in \mathbb{R}^p$, there exist $\textbf{P} \in \mathcal{P}$ and $\textbf{J} \in \mathcal{J}$
such that
\[\textbf{W}(F_{\textbf{Ax} + \textbf{b}}) \textbf{A} \textbf{x} = \textbf{P} \textbf{J} \textbf{W}(F_\textbf{x}) \textbf{x}.\]
\end{mydefinition}
Note that the functional $\textbf{W}(F)$ is defined at any $F$ and is required to be Fisher consistent to $\boldsymbol{\Omega}^{-1}$ up to permutation and heterogeneous sign-changes of the rows. The functional  $\textbf{W}(F)$   is  \textit{affine equivariant} and therefore provides a transformation  to an {\it invariant coordinate system (ICS)}, that is, it is also an {\it ICS functional};  see \cite{TylerCritchleyDumbgenOja:2009} and \cite{ilmonen2012invariant}.
Let next $F_n$ be the empirical cumulative distribution function from a random sample $\textbf{x}_1,...,\textbf{x}_n$ from $F$. Then $\textbf{W}(F_n)$ provides a natural affine equivariant estimate of $\textbf{W}(F)$. The affine equivariance property  simplifies the derivation of the asymptotic behavior of $\textbf{W}(F_n)$ considerably as we may restrict our attention to the case $\boldsymbol{\Omega} = \textbf{I}_p$ only.

Finally, note that Assumption \ref{assu:gaus} guarantees that the estimated vector of independent components is indeed equal to $\textbf{z}$ up to sign and order, that is, all independent component functionals $\textbf{W}(F_\textbf{x})$ lead to the same independent components up to sign change and permutation; see the Ghurye-Olkin-Zinger characterization theorem in \citet{ibragimov2014ghurye}.

\section{Univariate third and fourth cumulants}\label{sec:uni}

We first consider the use of univariate third and fourth cumulants in estimating the unmixing matrix, leading in old and new variants of the so-called deflation-based FastICA and symmetric FastICA.

\subsection{Estimating the components separately} \label{subsec:sep}

First, to actually guarantee the validity of our approach, we prove the following inequality, an extension of  Theorem 2 in \cite{miettinen2014fourth}.

\begin{mytheorem}\label{theo:sep_ineq}
Let $\textbf{z} \in \mathbb{R}^p$ have independent components with $E(\textbf{z}) = \textbf{0}$ and $Cov(\textbf{z}) = \textbf{I}_p$.  Then
\begin{align*}
\alpha_1 \gamma^2(\textbf{u}^T \textbf{z}) + \alpha_2 \kappa^2(\textbf{u}^T \textbf{z}) \leq \underset{1 \leq k \leq p}{\emph{max}} \left( \alpha_1 \gamma_k^2 + \alpha_2 \kappa_k^2 \right),
\end{align*}
for all $\alpha_1, \alpha_2 \in \mathbb{R}^+ \cup \{0\}$ and for all vectors $\textbf{u} \in \mathbb{R}^p$ satisfying $\textbf{u}^T \textbf{u} = 1$.
\end{mytheorem}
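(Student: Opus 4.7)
The plan is to reduce both $\gamma(\mathbf{u}^T\mathbf{z})$ and $\kappa(\mathbf{u}^T\mathbf{z})$ to explicit weighted sums of the $\gamma_k$'s and $\kappa_k$'s, and then bound the two squared quantities simultaneously via Cauchy--Schwarz. Write $y = \mathbf{u}^T\mathbf{z} = \sum_k u_k z_k$. Because the $z_k$ are centered, unit-variance and mutually independent, cross-moments of the form $E(z_i z_j^2)$ and $E(z_i z_j z_l^2)$ with at least one singleton index vanish, so a direct expansion together with $\sum_k u_k^2 = 1$ gives
\begin{align*}
\gamma(y) &= \sum_{k=1}^p u_k^3 \gamma_k, \\
\kappa(y) &= \sum_{k=1}^p u_k^4 \kappa_k.
\end{align*}
This already packages all of the structure we need.

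Next, I would apply Cauchy--Schwarz in each term. Writing $|u_k|^3 |\gamma_k| = |u_k| \cdot (u_k^2 |\gamma_k|)$, one gets
\[
\gamma^2(y) \;\le\; \Bigl(\sum_k |u_k|^3 |\gamma_k|\Bigr)^2 \;\le\; \Bigl(\sum_k u_k^2\Bigr)\Bigl(\sum_k u_k^4 \gamma_k^2\Bigr) \;=\; \sum_k u_k^4 \gamma_k^2,
\]
and similarly, using $\sum_k u_k^4 \le (\sum_k u_k^2)^2 = 1$,
\[
\kappa^2(y) \;\le\; \Bigl(\sum_k u_k^4\Bigr)\Bigl(\sum_k u_k^4 \kappa_k^2\Bigr) \;\le\; \sum_k u_k^4 \kappa_k^2.
\]
Since $\alpha_1, \alpha_2 \ge 0$, taking the weighted sum of these two inequalities yields
\[
\alpha_1 \gamma^2(y) + \alpha_2 \kappa^2(y) \;\le\; \sum_{k=1}^p u_k^4 \bigl(\alpha_1 \gamma_k^2 + \alpha_2 \kappa_k^2\bigr).
\]

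Finally, I would finish by pulling out the maximum: the right-hand side is bounded by $\max_k(\alpha_1 \gamma_k^2 + \alpha_2 \kappa_k^2)\cdot\sum_k u_k^4$, and $\sum_k u_k^4 \le 1$ as already noted, giving the claim. The only real subtlety is the first Cauchy--Schwarz step (since $u_k^3$ may be negative and the $\gamma_k$ may have either sign), which is handled by passing through absolute values before splitting the factors; everything else is bookkeeping. The same proof scheme also makes the equality case transparent, which is presumably what will drive the later uniqueness/consistency arguments for the projection pursuit functionals.
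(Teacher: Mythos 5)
Your proposal is correct and follows essentially the same route as the paper: expand $\gamma(\textbf{u}^T\textbf{z})=\sum_k u_k^3\gamma_k$ and $\kappa(\textbf{u}^T\textbf{z})=\sum_k u_k^4\kappa_k$ by independence, apply Cauchy--Schwarz to each squared sum, and absorb the remaining powers of $u_k$ using $\sum_k u_k^2=1$ before pulling out the maximum. The only (immaterial) difference is your Cauchy--Schwarz split $u_k^2\cdot(u_k^2\kappa_k)$ for the kurtosis term, where the paper uses $u_k\cdot(u_k^3\kappa_k)$ and bounds $\sum_k u_k^6\kappa_k^2$ instead.
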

The inequality in Theorem \ref{theo:sep_ineq} implies that the independent components can be recovered by repeatedly searching for mutually orthogonal vectors
 $\textbf{u}$ maximizing the projection index
 \[ \alpha_1 \gamma^2(\textbf{u}^T \textbf{x}_{st}) + \alpha_2 \kappa^2(\textbf{u}^T \textbf{x}_{st}) \]
 and we give the following.

\begin{mydefinition}\label{def:sep_func}
The deflation-based projection pursuit functional based on squared third and fourth cumulants is a functional  $\textbf{W} (F_\textbf{x}) = \textbf{U}\boldsymbol{\Sigma}^{-1/2}$, where  $\boldsymbol{\Sigma} = Cov(\textbf{x})$ and the rows of the orthogonal matrix $\textbf{U} = (\textbf{u}_1,...,\textbf{u}_p)^T$ are found one-by-one, such that
\[ \textbf{u}_k = \underset{\textbf{u}_k^T \textbf{u}_j = \delta_{kj}, 1 \leq j \leq k}{\emph{argmax}} \left( \alpha \gamma^2(\textbf{u}_k^T \textbf{x}_{st}) + (1 - \alpha)
\kappa^2(\textbf{u}_k^T \textbf{x}_{st}) \right), \]
where $\alpha \in [0, 1]$ is the proportion of weight given to third cumulants.
\end{mydefinition}

Note that weights $\alpha_1$ and $\alpha_2$ and weights $\alpha_1/(\alpha_1+\alpha_2)$ and $\alpha_2/(\alpha_1+\alpha_2)$ in Theorem \ref{theo:sep_ineq} lead to the same optimization problem, and we may without loss of generality use just a single weight parameter $\alpha=\alpha_1/(\alpha_1+\alpha_2)$. An interesting choice is $\alpha=0.8$ corresponding to
\[
\frac {\gamma^2(\textbf{u}^T \textbf{x}_{st})}6   + \frac { \kappa^2(\textbf{u}^T \textbf{x}_{st})} { 24}
\]
as we are then maximizing the value of a functional that is often used to test for univariate normality (see \citet{jarque1987test}).
Note also, that choosing either $\alpha = 0$ or $\alpha = 1$ makes the proposed method equivalent to the so-called \textit{deflation-based FastICA} \citep{Hyvarinen:1999a} with the projection indices
$|\gamma(\textbf{u}_k^T \textbf{x}_{st})|$ and $|\kappa(\textbf{u}_k^T \textbf{x}_{st})|$, respectively. For general results concerning deflation-based FastICA using absolute values see also \cite{Ollila2010,nordhausen2011deflation,MiettinenNordhausenOjaTaskinen:2014}.

The affine equivariance of the procedure given in Definition \ref{def:sep_func} follows simply from the fact that the optimization problem along with the constraints is invariant under mappings $\textbf{x}_{st} \mapsto \textbf{V} \textbf{x}_{st}$, where $ \textbf{V} \in \mathcal{U}$. (Recall that the transformation $\textbf{x}\to \textbf{A} \textbf{x} + \textbf{b}$
induces the transformation $\textbf{x}_{st}\to  \textbf{V}  \textbf{x}_{st}$ for some orthogonal $\textbf{V}$.)
This together with Theorem \ref{theo:sep_ineq} implies the following.

\begin{mylemma}\label{lem:sep_ae}
The deflation-based projection pursuit functional  $\textbf{W} (F_\textbf{x})$ in Definition \ref{def:sep_func}  is an independent component functional for every $\alpha \in [0, 1]$.
\end{mylemma}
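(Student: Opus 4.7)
The plan is to verify the two defining properties of an independent component functional directly: (i) the transformed vector $\textbf{W}(F_\textbf{x})\textbf{x}$ has independent components, and (ii) the functional is affine equivariant in the sense of the definition.

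For property (i), I would first apply Theorem \ref{theo:icm_rotate} to write $\textbf{x}_{st}=\textbf{U}_0^T\textbf{z}$ for some orthogonal $\textbf{U}_0$. The change of variable $\textbf{v}=\textbf{U}_0\textbf{u}$ is a bijection on the unit sphere and preserves mutual orthogonality, so the sequential optimization in Definition \ref{def:sep_func} is equivalent to the same optimization with $\textbf{x}_{st}$ replaced by $\textbf{z}$. Theorem \ref{theo:sep_ineq} then says the first step attains its maximum at $\textbf{v}_1=\pm\textbf{e}_{k_1}$ for some $k_1\in\arg\max_k\{\alpha\gamma_k^2+(1-\alpha)\kappa_k^2\}$, that is, $\textbf{u}_1^T\textbf{x}_{st}=\pm z_{k_1}$. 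Appending the orthogonality constraint confines subsequent searches to the orthogonal complement of the vectors already found; re-applying Theorem \ref{theo:sep_ineq} on the remaining coordinates (formally by induction on $k$) forces $\textbf{U}\textbf{x}_{st}$ to equal $\textbf{P}\textbf{J}\textbf{z}$ for some $\textbf{P}\in\mathcal{P}$ and $\textbf{J}\in\mathcal{J}$. Hence $\textbf{W}(F_\textbf{x})\textbf{x}=\textbf{U}\boldsymbol{\Sigma}^{-1/2}\textbf{x}$ equals $\textbf{P}\textbf{J}\textbf{z}$ up to an additive constant, and in particular has independent components.

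For property (ii), I would use the observation already recorded in the excerpt: for $\textbf{x}^\ast=\textbf{A}\textbf{x}+\textbf{b}$ there is an orthogonal $\textbf{V}$ with $\textbf{x}^\ast_{st}=\textbf{V}\textbf{x}_{st}$, where $\textbf{V}=(\boldsymbol{\Sigma}^\ast)^{-1/2}\textbf{A}\boldsymbol{\Sigma}^{1/2}$. Substituting $\tilde{\textbf{u}}_k=\textbf{V}^T\textbf{u}_k^\ast$ turns the optimization defining the rows of $\textbf{U}^\ast$ (with respect to $\textbf{x}^\ast_{st}$) into exactly the optimization defining the rows of $\textbf{U}$ (with respect to $\textbf{x}_{st}$), and orthogonality of the rows is preserved because $\textbf{V}^T\textbf{V}=\textbf{I}_p$. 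The argmaxes coincide up to the same sign-and-permutation indeterminacy used in part (i), giving $\textbf{U}^\ast\textbf{V}=\textbf{P}_0\textbf{J}_0\textbf{U}$ for some $\textbf{P}_0\in\mathcal{P}$, $\textbf{J}_0\in\mathcal{J}$. Writing $\textbf{W}(F_{\textbf{x}^\ast})=\textbf{U}^\ast(\boldsymbol{\Sigma}^\ast)^{-1/2}$ and simplifying with $(\boldsymbol{\Sigma}^\ast)^{-1/2}\textbf{A}=\textbf{V}\boldsymbol{\Sigma}^{-1/2}$ yields
\[
\textbf{W}(F_{\textbf{x}^\ast})\textbf{A}\textbf{x}=\textbf{P}_0\textbf{J}_0\textbf{U}\boldsymbol{\Sigma}^{-1/2}\textbf{x}=\textbf{P}_0\textbf{J}_0\textbf{W}(F_\textbf{x})\textbf{x},
\]
which is precisely the required equivariance relation.

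The only genuinely delicate point is that Definition \ref{def:sep_func} uses an argmax that need not be unique, so one must argue that all possible deflation sequences for $\textbf{x}$ and $\textbf{x}^\ast$ differ only by a signed permutation. This is exactly what Theorem \ref{theo:sep_ineq} guarantees: each step's maximizer is a signed standard basis vector in the $\textbf{z}$-coordinates, and ties among the values $\alpha\gamma_k^2+(1-\alpha)\kappa_k^2$ only permit permutations inside a tied block, which are absorbed into $\textbf{P}_0$. So uniqueness-up-to-$\textbf{P}\textbf{J}$ is not an obstacle but rather precisely the flexibility the definition of an independent component functional allows.
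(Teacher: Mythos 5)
Your argument is correct and follows essentially the same route as the paper, which proves this lemma by exactly the two observations you elaborate: the optimization problem and its constraints are invariant under orthogonal maps $\textbf{x}_{st}\mapsto\textbf{V}\textbf{x}_{st}$ (giving equivariance), and Theorem \ref{theo:sep_ineq} identifies the successive maximizers as signed standard basis vectors in the $\textbf{z}$-coordinates (giving Fisher consistency up to $\textbf{P}\textbf{J}$). Your write-up simply fills in the details the paper leaves implicit, including the handling of non-unique argmaxes.
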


The Lagrangian of the maximization problem involving $\textbf{u}_k$ has the form
\begin{align*}
 L(\textbf{u}_k, \boldsymbol{\lambda}_k) &= \alpha \left( E\left[(\textbf{u}_k^T \textbf{x}_{st})^3 \right] \right)^2 + (1 - \alpha) \left( E\left[(\textbf{u}_k^T \textbf{x}_{st})^4 \right] - 3 \right)^2 \\
 &- \sum_{j=1}^{k-1} \lambda_{kj} \textbf{u}_j^T \textbf{u}_k - \lambda_{kk} (\textbf{u}_k^T \textbf{u}_k - 1).
\end{align*}
First differentiating w.r.t. $\textbf{u}_k$ and the Lagrangian multipliers and then solving for the Lagrangian multipliers and substituting them back in yields the following estimating equation for the $k$th row $\textbf{u}_k$.
\[
\left(\textbf{I}_p- \sum_{j=1}^k \textbf{u}_j \textbf{u}_j^T \right)  \textbf{T}_k=\textbf{0}
\]
where
\begin{eqnarray*}
\textbf{T}_k &=& 3 \alpha E\left[(\textbf{u}_k^T \textbf{x}_{st})^3\right] E\left[(\textbf{u}_k^T \textbf{x}_{st})^2 \textbf{x}_{st} \right]\\
             &+& 4 (1 - \alpha) \left(E\left[(\textbf{u}_k^T \textbf{x}_{st})^4 \right] - 3\right)  E\left[(\textbf{u}_k^T \textbf{x}_{st})^3 \textbf{x}_{st} \right].
\end{eqnarray*}
After finding $\textbf{u}_1,...,\textbf{u}_{k-1}$,  we then obtain a fixed-point solution for $\textbf{u}_k$  by successively iterating over the the following steps.
\begin{enumerate}
\item $\textbf{u}_k \leftarrow \left(\textbf{I}_p - \sum_{j=1}^{k-1} \textbf{u}_j \textbf{u}_j^T \right) \textbf{T}_k$.
\item $\textbf{u}_k \leftarrow \|\textbf{u}_k\|^{-1} \textbf{u}_k$.
\end{enumerate}
A Newton-Raphson type algorithm for this problem might be more efficient and will be considered in a separate paper.

Additionally, the estimating equations provide us with the following results regarding the asymptotic behavior of the unmixing matrix estimates $\hat{\textbf{W}}$ in the case $\boldsymbol{\Omega} = \textbf{I}_p$.  Note that, this is sufficient as all estimates are affine equivariant. The general case easily follows.

\begin{mytheorem}\label{theo:sep_asymp}
(i) Let $\textbf{z}_1,...,\textbf{z}_n$ be a random sample from a distribution with finite sixth moments and satisfying assumptions \ref{assu:ind} and \ref{assu:zeroskew}. Then there exists a sequence of solutions based on skewness (that is, $\alpha = 1$) such that $\hat{\textbf{W}}\rightarrow_P \textbf{I}_p$ and
\begin{align*}
\sqrt{n} \hat{w}_{kl} &= -\sqrt{n} \hat{w}_{lk} - \sqrt{n} \hat{s}_{kl} + o_P(1), \quad l < k, \\
\sqrt{n} (\hat{w}_{kk} - 1) &= -\dfrac{1}{2} \sqrt{n} (\hat{s}_{kk} - 1) + o_P(1), \\
\sqrt{n} \hat{w}_{kl} &= \frac{\sqrt{n} \hat{\psi}_{1kl}}{\gamma_k^2} + o_P(1), \quad l > k,
\end{align*}
where $\hat{\psi}_{1kl} = \gamma_k \hat{r}_{kl} - \gamma_k^2 \hat{s}_{kl}$.
\newline

(ii) Let $\textbf{z}_1,...,\textbf{z}_n$ be a random sample from a distribution with finite eighth moments and satisfying assumptions  \ref{assu:ind} and \ref{assu:zerokurt}. Then there exists a sequence of solutions based on kurtosis (that is, $\alpha = 0$) such that $\hat{\textbf{W}}\rightarrow_P \textbf{I}_p$ and
\begin{align*}
\sqrt{n} \hat{w}_{kl} &= -\sqrt{n} \hat{w}_{lk} - \sqrt{n} \hat{s}_{kl} + o_P(1), \quad l < k, \\
\sqrt{n} (\hat{w}_{kk} - 1) &= -\dfrac{1}{2} \sqrt{n} (\hat{s}_{kk} - 1) + o_P(1), \\
\sqrt{n} \hat{w}_{kl} &= \frac{\sqrt{n} \hat{\psi}_{2kl}}{\kappa_k^2} + o_P(1), \quad l > k,
\end{align*}
where $\hat{\psi}_{2kl} = \kappa_k \hat{q}_{kl} - \kappa_k \beta_k \hat{s}_{kl}$.
\newline

(iii) Let $\textbf{z}_1,...,\textbf{z}_n$ be a random sample from a distribution with finite eighth moments and satisfying assumptions \ref{assu:ind} and \ref{assu:zeroboth}. Then there exists a sequence of solutions based on both skewness and kurtosis such that $\hat{\textbf{W}}\rightarrow_P \textbf{I}_p$ and
\begin{align*}
\sqrt{n} \hat{w}_{kl} &= -\sqrt{n} \hat{w}_{lk} - \sqrt{n} \hat{s}_{kl} + o_P(1), \quad l < k, \\
\sqrt{n} (\hat{w}_{kk} - 1) &= -\dfrac{1}{2} \sqrt{n} (\hat{s}_{kk} - 1) + o_P(1), \\
\sqrt{n} \hat{w}_{kl} &= \frac{3 \alpha \sqrt{n} \hat{\psi}_{1kl} + 4 (1 - \alpha) \sqrt{n} \hat{\psi}_{2kl}}{3 \alpha \gamma_k^2 + 4 (1 - \alpha) \kappa_k^2} + o_P(1), \quad l > k,
\end{align*}
where $\alpha \in [0, 1]$ is the proportion of weight given to skewness, and $\hat{\psi}_{1kl}$ is as in (i) and $\hat{\psi}_{2kl}$ as in (ii).
\end{mytheorem}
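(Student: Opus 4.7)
By the affine equivariance (Lemma \ref{lem:sep_ae}) it suffices to treat $\boldsymbol{\Omega} = \textbf{I}_p$, so that $\textbf{x}_i = \textbf{z}_i$. Theorem \ref{theo:sep_ineq} identifies $\textbf{U} = \textbf{I}_p$ as a strict maximiser (up to sign and order) of the population projection index under Assumption \ref{assu:zeroskew}, \ref{assu:zerokurt}, or \ref{assu:zeroboth} respectively, so a standard Z-estimator argument yields a sample sequence $\hat{\textbf{U}} \rightarrow_P \textbf{I}_p$ and hence $\hat{\textbf{W}} = \hat{\textbf{U}} \hat{\boldsymbol{\Sigma}}^{-1/2} \rightarrow_P \textbf{I}_p$.

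Set $\hat{\textbf{A}} = \hat{\textbf{U}} - \textbf{I}_p$ and $\hat{\textbf{B}} = \hat{\boldsymbol{\Sigma}}^{-1/2} - \textbf{I}_p$. Orthogonality of $\hat{\textbf{U}}$ forces $\hat{\textbf{A}} + \hat{\textbf{A}}^T = -\hat{\textbf{A}} \hat{\textbf{A}}^T = O_P(n^{-1})$, so $\sqrt{n}\hat{a}_{kl} = -\sqrt{n}\hat{a}_{lk} + o_P(1)$ and $\sqrt{n}\hat{a}_{kk} = o_P(1)$; the expansion $\hat{\boldsymbol{\Sigma}}^{-1/2} = \textbf{I}_p - \tfrac{1}{2}(\hat{\boldsymbol{\Sigma}} - \textbf{I}_p) + O_P(n^{-1})$ together with the negligible centering contribution $\bar z_k \bar z_l = O_P(n^{-1})$ gives $\sqrt{n}\hat{b}_{kl} = -\tfrac{1}{2}\sqrt{n}(\hat{s}_{kl} - \delta_{kl}) + o_P(1)$. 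Since $\sqrt{n}(\hat{w}_{kl} - \delta_{kl}) = \sqrt{n}\hat{a}_{kl} + \sqrt{n}\hat{b}_{kl} + o_P(1)$, the first two displayed identities of each of (i)--(iii) drop out from these two relations together with the symmetry $\hat{s}_{kl} = \hat{s}_{lk}$.

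The third identity requires the estimating equation. For $l > k$, taking the $l$-th component of $(\textbf{I}_p - \sum_{j=1}^k \hat{\textbf{u}}_j \hat{\textbf{u}}_j^T) \hat{\textbf{T}}_k = \textbf{0}$ and using $\hat{\textbf{u}}_k^T \hat{E}[\hat{v}_k^s \hat{\textbf{z}}_{st}] = \hat{E}[\hat{v}_k^{s+1}]$ (so that $\hat{\textbf{u}}_k^T \hat{\textbf{T}}_k = 3\alpha \hat{\gamma}_k^2 + 4(1-\alpha)\hat{\kappa}_k \hat{\beta}_k \rightarrow_P C_k := 3\alpha\gamma_k^2 + 4(1-\alpha)\kappa_k\beta_k$, whereas $\hat{\textbf{u}}_j^T \hat{\textbf{T}}_k = O_P(n^{-1/2})$ for $j < k$ since its population value vanishes) reduces the equation, at order $n^{-1/2}$, to
\[
\hat{T}_{kl} = C_k\, \hat{a}_{kl} + o_P(n^{-1/2}).
\]
Expanding $\hat{v}_{ik} = \tilde z_{ik} + \sum_m \hat{b}_{km} \tilde z_{im} + \sum_{j\neq k} \hat{a}_{kj} \tilde z_{ij} + O_P(n^{-1})$ and $\hat{z}_{i,st,l} = \tilde z_{il} + \sum_m \hat{b}_{lm} \tilde z_{im} + O_P(n^{-1})$, averaging, and using the mutual independence of the $z_j$'s to kill all mixed moments $E[z_k z_l z_j]$ and $E[z_k^2 z_l z_j]$ with $j \notin \{k,l\}$, the only surviving order-$n^{-1/2}$ contributions are
\[
\sqrt{n}\,\hat{E}[\hat{v}_k^2 \hat{z}_{st,l}] = \sqrt{n}\hat{r}_{kl} - \tfrac{1}{2}\gamma_k\sqrt{n}\hat{s}_{kl} + o_P(1),
\]
\[
\sqrt{n}\,\hat{E}[\hat{v}_k^3 \hat{z}_{st,l}] = \sqrt{n}\hat{q}_{kl} - \tfrac{1}{2}(\beta_k + 3)\sqrt{n}\hat{s}_{kl} + 3\sqrt{n}\hat{a}_{kl} + o_P(1),
\]
the extra $3\sqrt{n}\hat{a}_{kl}$ in the second line arising from the $j=l$ summand in the $\hat{\textbf{u}}_k$-perturbation of $\hat{v}_k^3$ together with $E[z_k^2 z_l^2] = 1$. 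Plugging these into the reduced equation, moving the $\hat{a}_{kl}$ contribution to the left (the identity $\beta_k - 3 = \kappa_k$ turns $C_k - 12(1-\alpha)\kappa_k$ into $3\alpha\gamma_k^2 + 4(1-\alpha)\kappa_k^2$), and rewriting $\gamma_k \hat{r}_{kl} = \hat{\psi}_{1kl} + \gamma_k^2 \hat{s}_{kl}$ and $\kappa_k \hat{q}_{kl} = \hat{\psi}_{2kl} + \kappa_k\beta_k \hat{s}_{kl}$ to collapse the $\hat{s}_{kl}$-coefficient gives
\[
\bigl[3\alpha\gamma_k^2 + 4(1-\alpha)\kappa_k^2\bigr] \sqrt{n}\hat{a}_{kl} = 3\alpha\sqrt{n}\hat{\psi}_{1kl} + 4(1-\alpha)\sqrt{n}\hat{\psi}_{2kl} + \tfrac{1}{2}\bigl[3\alpha\gamma_k^2 + 4(1-\alpha)\kappa_k^2\bigr]\sqrt{n}\hat{s}_{kl} + o_P(1).
\]
Dividing (the bracket is nonzero for every relevant $k$ under the respective assumption, and the deflation handles the one possible exception) and substituting into $\sqrt{n}\hat{w}_{kl} = \sqrt{n}\hat{a}_{kl} - \tfrac{1}{2}\sqrt{n}\hat{s}_{kl} + o_P(1)$ yields the third identity of (iii); parts (i) and (ii) are the $\alpha = 1$ and $\alpha = 0$ specialisations.

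The main obstacle is the perturbation bookkeeping: the apparent coefficient $C_k$ of $\hat{a}_{kl}$ produced by the $\hat{\textbf{u}}_k\hat{\textbf{u}}_k^T$ projector is secretly modified by a hidden $\hat{a}_{kl}$-contribution inside $\hat{E}[\hat{v}_k^3 \hat{z}_{st,l}]$ which only becomes visible after expanding the sample means to order $n^{-1/2}$, and the numerous candidate stochastic terms must each be shown either to cancel through the $\hat{\psi}_{1kl}$/$\hat{\psi}_{2kl}$-rewriting or to vanish by the mutual independence of the components of $\textbf{z}$.
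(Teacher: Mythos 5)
Your proposal is correct and follows essentially the same route as the paper: reduction to $\boldsymbol{\Omega}=\textbf{I}_p$ by equivariance, the decomposition $\hat{\textbf{W}}=\hat{\textbf{U}}\hat{\boldsymbol{\Sigma}}^{-1/2}$ with the orthogonality of $\hat{\textbf{U}}$ and the expansion of $\hat{\boldsymbol{\Sigma}}^{-1/2}$ yielding the diagonal and $l<k$ relations (the paper's Lemma A1), and a linearization of the deflation estimating equations for $l>k$ whose algebra --- including the hidden $3\sqrt{n}\hat{a}_{kl}$ contribution from the perturbation of the fourth-moment term and the collapse $\kappa_k\beta_k-3\kappa_k=\kappa_k^2$ --- reproduces exactly the paper's computation carried out via the identities cited from Nordhausen et al.\ (2011). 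The only place you are noticeably thinner than the paper is consistency, where the paper works through the deflation steps explicitly (uniform convergence on the unit sphere, then passing to the estimated orthogonal complements) rather than appealing to a generic Z-estimator argument.
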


\begin{mycorollary} \label{cor:sep_asymp}
(i) Under the assumptions of Theorem \ref{theo:sep_asymp}(i) the limiting distribution of $\sqrt{n} \, vec(\hat{\textbf{W}} - \textbf{I}_p)$ is multivariate normal with mean vector $\textbf{0}$ and elementwise variances
\begin{alignat*}{3}
&ASV&(\hat{w}_{kl}) &= \frac{\zeta_{11l}}{\gamma_l^4} + 1, &\quad l < k, \\
&ASV&(\hat{w}_{kk}) &= \frac{\kappa_k + 2}{4}, &\\
&ASV&(\hat{w}_{kl}) &= \frac{\zeta_{11k}}{\gamma_k^4}, &\quad l > k,
\end{alignat*}
where $\zeta_{11k} = \gamma_k^2 (\nu_k - \gamma_k^2)$.
\newline

(ii) Under the assumptions of Theorem \ref{theo:sep_asymp}(ii) the limiting distribution of $\sqrt{n} \, vec(\hat{\textbf{W}} - \textbf{I}_p)$ is multivariate normal with mean vector $\textbf{0}$ and elementwise variances
\begin{alignat*}{3}
&ASV&(\hat{w}_{kl}) &= \frac{\zeta_{22l}}{\kappa_l^4} + 1, &\quad l < k, \\
&ASV&(\hat{w}_{kk}) &= \frac{\kappa_k + 2}{4}, &\\
&ASV&(\hat{w}_{kl}) &= \frac{\zeta_{22k}}{\kappa_k^4}, &\quad l > k,
\end{alignat*}
where $\zeta_{22k} = \kappa_k^2 (\omega_k - \beta_k^2) $.
\newline

(iii) Under the assumptions of Theorem \ref{theo:sep_asymp}(iii) the limiting distribution of $\sqrt{n} \, vec(\hat{\textbf{W}} - \textbf{I}_p)$ is multivariate normal with mean vector $\textbf{0}$ and elementwise variances
\begin{alignat*}{3}
&ASV&(\hat{w}_{kl}) &= \frac{9 \alpha^2 \zeta_{11l} + 16 (1 - \alpha)^2 \zeta_{22l} + 24 \alpha (1 - \alpha) \zeta_{12l}}{(3 \alpha \gamma_l^2 + 4 (1 - \alpha) \kappa_l^2)^2} + 1, &\quad l < k, \\
&ASV&(\hat{w}_{kk}) &= \frac{\kappa_k + 2}{4}, &\\
&ASV&(\hat{w}_{kl}) &= \frac{9 \alpha^2 \zeta_{11k} + 16 (1 - \alpha)^2 \zeta_{22k} + 24 \alpha (1 - \alpha) \zeta_{12k}}{(3 \alpha \gamma_k^2 + 4 (1 - \alpha) \kappa_k^2)^2}, &\quad l > k,
\end{alignat*}
where $\alpha \in [0, 1]$ is the proportion of weight given to skewness, and $\zeta_{11k}$ is as in (i), $\zeta_{22k}$ as in (ii) and $\zeta_{12k} = \gamma_k \kappa_k (\eta_k - \gamma_k \beta_k)$.
\end{mycorollary}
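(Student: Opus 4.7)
The plan is to convert the stochastic expansions of Theorem \ref{theo:sep_asymp} into asymptotic variances using the joint central limit theorem for the empirical averages $\hat{s}_{kl}$, $\hat{r}_{kl}$ and $\hat{q}_{kl}$. First I would invoke the CLT to obtain joint asymptotic multinormality for the vector of all these statistics, with variances and covariances as listed in Table \ref{tab:cov}. By Slutsky's theorem, the asymptotic variance of each entry of $\sqrt{n}(\hat{\textbf{W}} - \textbf{I}_p)$ then equals the variance of the leading linear combination appearing in the corresponding expansion, and the marginal normality claimed in the statement follows at once.

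For the diagonal entries, all three parts (i)--(iii) share the expansion $\sqrt{n}(\hat{w}_{kk} - 1) = -\tfrac{1}{2}\sqrt{n}(\hat{s}_{kk} - 1) + o_P(1)$; since $\mathrm{Var}(z_{ik}^2) = \beta_k - 1 = \kappa_k + 2$, one obtains $ASV(\hat{w}_{kk}) = (\kappa_k + 2)/4$. For the upper-triangular entries $l > k$ in part (iii) I would compute the variance of the numerator $3\alpha\sqrt{n}\hat{\psi}_{1kl} + 4(1 - \alpha)\sqrt{n}\hat{\psi}_{2kl}$. Expanding $\hat{\psi}_{1kl}$ and $\hat{\psi}_{2kl}$ in terms of $\hat{r}_{kl}$, $\hat{q}_{kl}$ and $\hat{s}_{kl}$ and reading off the relevant entries of Table \ref{tab:cov} yields
\begin{align*}
\mathrm{Var}(\sqrt{n}\hat{\psi}_{1kl}) &= \gamma_k^2(\nu_k - \gamma_k^2) = \zeta_{11k}, \\
\mathrm{Var}(\sqrt{n}\hat{\psi}_{2kl}) &= \kappa_k^2(\omega_k - \beta_k^2) = \zeta_{22k}, \\
\mathrm{Cov}(\sqrt{n}\hat{\psi}_{1kl}, \sqrt{n}\hat{\psi}_{2kl}) &= \gamma_k \kappa_k(\eta_k - \gamma_k \beta_k) = \zeta_{12k},
\end{align*}
from which the $l > k$ formula of (iii) is immediate; parts (i) and (ii) drop out as the special cases $\alpha = 1$ and $\alpha = 0$.

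The main obstacle will be the lower-triangular entries. Here Theorem \ref{theo:sep_asymp} gives $\sqrt{n}\hat{w}_{kl} = -\sqrt{n}\hat{w}_{lk} - \sqrt{n}\hat{s}_{kl} + o_P(1)$, so I would substitute the upper-triangular expansion (with $k$ and $l$ swapped) for $\hat{w}_{lk}$ and use $\hat{s}_{kl} = \hat{s}_{lk}$ to merge the $\hat{s}_{lk}$ contributions. Writing $A = 3\alpha$ and $B = 4(1 - \alpha)$, the identity $\beta_l - \kappa_l = 3$ reduces the residual coefficient of $\sqrt{n}\hat{s}_{lk}$ to $3 B \kappa_l / (A\gamma_l^2 + B\kappa_l^2)$. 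Expanding the variance of the resulting linear combination via Table \ref{tab:cov} then produces, on top of the $\zeta_{11l}, \zeta_{22l}, \zeta_{12l}$ combination seen before, three extra cross-terms that, after using $(\beta_l - 3)^2 = \kappa_l^2$, assemble into the perfect square $(A\gamma_l^2 + B\kappa_l^2)^2$; dividing by the squared denominator yields exactly the additive $+1$ appearing in the claimed ASV. As before, the boundary choices $\alpha = 1$ and $\alpha = 0$ specialise to (i) and (ii), completing the proof.
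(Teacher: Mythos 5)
Your proposal is correct and follows exactly the route the paper intends: the corollary is an immediate consequence of the expansions in Theorem \ref{theo:sep_asymp} combined with the joint limiting multinormality and the covariances of $\sqrt{n}\hat{s}_{kl}$, $\sqrt{n}\hat{r}_{kl}$, $\sqrt{n}\hat{q}_{kl}$ in Table \ref{tab:cov}, which is why the paper states it without a separate proof. Your variance and covariance computations for $\hat{\psi}_{1kl}$, $\hat{\psi}_{2kl}$ and the lower-triangular cancellation producing the additive $+1$ all check out.
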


\subsection{Estimating the components simultaneously}\label{symm FastIca}

As in Section \ref{subsec:sep}, we first provide the justification for the validity of our approach in the form of the following inequality.

\begin{mytheorem}\label{theo:sym_ineq}
Let $\textbf{z} \in \mathbb{R}^p$ have independent components with $E(\textbf{z}) = \textbf{0}$ and $Cov(\textbf{z}) = \textbf{I}_p$.  Then
\[
\alpha \sum_{k=1}^p \gamma^2(\textbf{u}_k^T \textbf{z}) + (1-\alpha) \sum_{k=1}^p \kappa^2(\textbf{u}_k^T \textbf{z})
\leq  \alpha \sum_{k=1}^p \gamma_k^2 + (1-\alpha) \sum_{k=1}^p \kappa_k^2,
\]
for all orthogonal matrices $\textbf{U} = (\textbf{u}_1,..., \textbf{u}_p)^T \in \mathbb{R}^{p \times p}$ and for all $\alpha \in [0, 1]$.
\end{mytheorem}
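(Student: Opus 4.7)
The plan is to reduce the left-hand side to an explicit polynomial in the entries $u_{kj}$ of $\textbf{U}$ by exploiting two things: (a) the rows $\textbf{u}_k$ of an orthogonal matrix are unit vectors, so $\textbf{u}_k^T \textbf{z}$ is already standardized, and (b) $\gamma$ and $\kappa$ are cumulants, hence additive over independent summands. Writing $y_k = \textbf{u}_k^T \textbf{z} = \sum_j u_{kj} z_j$, assumption $E(\textbf{z})=\textbf{0}$, $Cov(\textbf{z})=\textbf{I}_p$ and $\sum_j u_{kj}^2 = 1$ give $E(y_k)=0$ and $Var(y_k)=1$, so
\[
\gamma(y_k) = E(y_k^3) = \sum_{j=1}^p u_{kj}^3 \gamma_j, \qquad \kappa(y_k) = \sum_{j=1}^p u_{kj}^4 \kappa_j,
\]
the second identity by additivity of the fourth cumulant under independence.

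Next I would bound each squared cumulant by Cauchy--Schwarz. For skewness,
\[
\gamma^2(y_k) = \Bigl(\sum_{j} u_{kj}\cdot u_{kj}^2 \gamma_j\Bigr)^2 \leq \Bigl(\sum_j u_{kj}^2\Bigr)\Bigl(\sum_j u_{kj}^4 \gamma_j^2\Bigr) = \sum_j u_{kj}^4 \gamma_j^2,
\]
and for kurtosis, using $\sum_j u_{kj}^4 \le \sum_j u_{kj}^2 = 1$,
\[
\kappa^2(y_k) = \Bigl(\sum_{j} u_{kj}^2\cdot u_{kj}^2 \kappa_j\Bigr)^2 \leq \Bigl(\sum_j u_{kj}^4\Bigr)\Bigl(\sum_j u_{kj}^4 \kappa_j^2\Bigr) \leq \sum_j u_{kj}^4 \kappa_j^2.
\]

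Now I would sum both bounds over $k$, switch the order of summation, and exploit the orthogonality of the \emph{columns} of $\textbf{U}$. Since $|u_{kj}|\le 1$, we have $u_{kj}^4 \le u_{kj}^2$, hence $\sum_k u_{kj}^4 \leq \sum_k u_{kj}^2 = 1$ for each $j$. This gives
\[
\sum_{k=1}^p \gamma^2(y_k) \leq \sum_{j=1}^p \gamma_j^2 \sum_{k=1}^p u_{kj}^4 \leq \sum_{j=1}^p \gamma_j^2,
\]
and analogously $\sum_{k} \kappa^2(y_k) \leq \sum_j \kappa_j^2$. Forming the convex combination with weights $\alpha$ and $1-\alpha$ then yields the claim.

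There is no real obstacle; the only minor care-point is recognizing that the two Cauchy--Schwarz applications need to be set up differently (pairing $u_{kj}$ with $u_{kj}^2 \gamma_j$ in the skewness case, but $u_{kj}^2$ with $u_{kj}^2 \kappa_j$ in the kurtosis case) so that the ``spare'' factor always sums to $1$ row-wise. After that, the column-wise bound $\sum_k u_{kj}^4 \le 1$ is what converts the double sum into the desired right-hand side. Equality holds precisely when each $\textbf{u}_k$ is a signed standard basis vector, i.e.\ when $\textbf{U}$ is a signed permutation matrix, consistent with the identifiability of the IC model up to sign and order.
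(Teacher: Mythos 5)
Your proof is correct and follows essentially the same route as the paper: the paper expands the cumulants via independence, applies Cauchy--Schwarz row-wise (packaged as a general lemma for $\sum_{i,k,l}u_{ik}^r u_{il}^r b_k b_l$ with $r=3,4$), and then uses $|u_{ik}|\le 1$ together with column orthonormality, exactly as you do. The only quibble is your closing aside on the equality case, which is not quite precise (e.g.\ if all $\gamma_j$ and $\kappa_j$ vanish, equality holds for every orthogonal $\textbf{U}$), but that claim is not part of the theorem.
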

The inequality in Theorem \ref{theo:sym_ineq} suggests the following strategy for searching for the independent components.
\begin{mydefinition}\label{def:sym_func}
The symmetric projection pursuit functional based on squared third and fourth cumulants is a functional  $\textbf{W} (F_\textbf{x}) = \textbf{U}\boldsymbol{\Sigma}^{-1/2}$, where  $\boldsymbol{\Sigma} = Cov(\textbf{x})$ and the rows of the orthogonal matrix $\textbf{U} = (\textbf{u}_1,...,\textbf{u}_p)^T$ are found simultaneously, such that
\[ \textbf{U} = \underset{\textbf{U} \in \mathcal{U}}{\emph{argmax}} \left( \alpha \sum_{k=1}^p \gamma^2(\textbf{u}_k^T \textbf{x}_{st}) + (1 - \alpha) \sum_{k=1}^p \kappa^2(\textbf{u}_k^T \textbf{x}_{st}) \right), \]
where $\alpha \in [0, 1]$ is the proportion of weight given to third cumulants.
\end{mydefinition}

 Recall that in the classical {\it symmetric fastICA} approach utilizing third or fourth cumulants one finds $\textbf{U}$ that maximizes either  $ \sum_{k=1}^p |\gamma(\textbf{u}_k^T \textbf{x}_{st})|$ or $ \sum_{k=1}^p |\kappa(\textbf{u}_k^T \textbf{x}_{st})|$. We thus use squares instead of absolute values and both cumulants simultaneously. See also \cite{Wei2014,miettinen2014fourth} for more details on the approach using absolute values.

In \citet{comon1994independent} the projection indices that satisfy inequalities such as in Theorem \ref{theo:sym_ineq} are called  \textit{contrasts}, see also \citet{moreau2001generalization}.
Both papers also show that in general any cumulants of order 3 or higher can be used in independent component analysis as contrasts.

It is easy to see that the functional  in  Definition \ref{def:sym_func} is affine equivariant and  Theorem \ref{theo:sym_ineq}  implies the following.

\begin{mylemma}\label{lem:symm_ae}
The deflation-based projection pursuit functional  $\textbf{W} (F_\textbf{x})$ in Definition \ref{def:sym_func}  is an independent component functional for every $\alpha \in [0, 1]$.
\end{mylemma}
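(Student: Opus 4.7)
The plan is to verify the two defining properties of an independent component functional for $\textbf{W}(F_\textbf{x}) = \textbf{U}\boldsymbol{\Sigma}^{-1/2}$ of Definition \ref{def:sym_func}. Since the objective is continuous and the feasible set $\mathcal{U}$ is compact, a maximizer $\textbf{U}$ exists at every $F$ with finite fourth moments, so $\textbf{W}(F)$ is well defined. To connect the problem to $\textbf{z}$, Theorem \ref{theo:icm_rotate} gives $\textbf{x}_{st} = \textbf{U}_0^T\textbf{z}$ for some orthogonal $\textbf{U}_0$, and setting $\textbf{V} = \textbf{U}\textbf{U}_0^T$ (a bijection of $\mathcal{U}$ onto itself) reparametrizes the optimization as maximizing $\alpha\sum_k\gamma^2(\textbf{v}_k^T\textbf{z}) + (1-\alpha)\sum_k\kappa^2(\textbf{v}_k^T\textbf{z})$ over $\textbf{V} \in \mathcal{U}$.

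For affine equivariance, let $\textbf{x}^* = \textbf{A}\textbf{x} + \textbf{b}$. By the fact recalled in Section \ref{sec:nota}, $\textbf{x}^*_{st} = \textbf{V}_0\textbf{x}_{st}$ with $\textbf{V}_0 = (\textbf{A}\boldsymbol{\Sigma}\textbf{A}^T)^{-1/2}\textbf{A}\boldsymbol{\Sigma}^{1/2} \in \mathcal{U}$. Substituting into the objective for $\textbf{x}^*$ and using the reparametrization $\tilde{\textbf{U}} = \textbf{U}\textbf{V}_0$, the maximizer $\textbf{U}^*$ for $F_{\textbf{x}^*}$ must satisfy $\textbf{U}^*\textbf{V}_0 = \textbf{P}\textbf{J}\textbf{U}$ for some $\textbf{P}\in\mathcal{P}$, $\textbf{J}\in\mathcal{J}$ (invoking the signed-permutation ambiguity of the argmax to be established in the next step). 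A brief calculation using $\textbf{U}^*(\textbf{A}\boldsymbol{\Sigma}\textbf{A}^T)^{-1/2}\textbf{A} = \textbf{U}^*\textbf{V}_0\boldsymbol{\Sigma}^{-1/2} = \textbf{P}\textbf{J}\textbf{U}\boldsymbol{\Sigma}^{-1/2}$ then yields $\textbf{W}(F_{\textbf{x}^*})\textbf{A}\textbf{x} = \textbf{P}\textbf{J}\textbf{W}(F_\textbf{x})\textbf{x}$, which is the required equivariance.

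For the independence property, apply Theorem \ref{theo:sym_ineq} in the reparametrized form, which caps the objective at $M := \alpha\sum_k\gamma_k^2 + (1-\alpha)\sum_k\kappa_k^2$. This bound is attained whenever $\textbf{V}$ is a signed permutation $\textbf{P}\textbf{J}$, since then $\textbf{v}_k^T\textbf{z} = \pm z_{\pi(k)}$ and squared cumulants are sign-invariant. Hence the global maximum equals $M$, and the core task is the converse: every maximizer must be a signed permutation. Tracing the equality case in the Cauchy--Schwarz-type estimates underlying Theorem \ref{theo:sym_ineq}, each row $\textbf{v}_k$ is forced to concentrate on a single coordinate, so $\textbf{V} = \textbf{P}\textbf{J}$ and $\textbf{U}\textbf{x}_{st} = \textbf{P}\textbf{J}\textbf{z}$ has independent components; the residual additive term $\textbf{U}\boldsymbol{\Sigma}^{-1/2}\boldsymbol{\mu}$ in $\textbf{W}(F_\textbf{x})\textbf{x}$ is deterministic and so does not affect independence.

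The main obstacle will be making this converse equality analysis rigorous. The inequality in Theorem \ref{theo:sym_ineq} alone does not determine the argmax, and for genuinely mixed weights $\alpha \in (0,1)$ neither the skewness nor the kurtosis contribution need be tight on its own. One must therefore combine the row-wise Cauchy--Schwarz equality conditions for both cumulants simultaneously; the non-degeneracy hypothesis that each component is distinguished by at least one non-vanishing cumulant (as in Assumption \ref{assu:zeroboth}) is what ultimately prevents ``mixed'' rotations from achieving $M$ and pins down $\textbf{V}$ up to a signed permutation.
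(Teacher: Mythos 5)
Your proposal is correct and takes essentially the same route as the paper, which offers no separate proof of this lemma: it simply observes that affine equivariance follows from the invariance of the optimization problem under orthogonal maps $\textbf{x}_{st}\mapsto \textbf{V}\textbf{x}_{st}$ and that Theorem \ref{theo:sym_ineq} then gives Fisher consistency. The equality-case analysis you flag as the main obstacle (equality in Lemma A2 forces $v_{ik}\in\{0,\pm 1\}$ for every column $k$ whose relevant cumulant does not vanish, so that under Assumption \ref{assu:zeroboth} orthogonality pins down $\textbf{V}=\textbf{P}\textbf{J}$ with $\textbf{P}\in\mathcal{P}$ and $\textbf{J}\in\mathcal{J}$) is precisely the detail the paper leaves implicit, so you are if anything more careful than the source.
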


The Lagrangian of the maximization problem in Definition \ref{def:sym_func} has the form
\begin{align*}
 L(\textbf{U}, \boldsymbol{\Lambda}) &= \alpha \sum_{k=1}^p \left( E\left[(\textbf{u}_k^T \textbf{x}_{st})^3 \right] \right)^2 + (1 - \alpha) \sum_{k=1}^p \left( E\left[(\textbf{u}_k^T \textbf{x}_{st})^4 \right] - 3 \right)^2 \\
 &- \sum_{k=1}^{p-1} \sum_{l=k+1}^p \lambda_{kl} \textbf{u}_k^T \textbf{u}_l - \sum_{k=1}^p \lambda_{kk} (\textbf{u}_k^T \textbf{u}_k - 1).
\end{align*}
First differentiating w.r.t. $\textbf{U}$ and the Lagrangian multipliers in $\boldsymbol{\Lambda}$ and then noticing that the multipliers have two solutions that must be equal,
we get  equations
\[
\textbf{u}_l^T \textbf{T}_k =\textbf{u}_k^T \textbf{T}_l,\ \ k,l=1,...,p,
\]
where again
\begin{eqnarray*}
\textbf{T}_k &=& 3 \alpha E\left[(\textbf{u}_k^T \textbf{x}_{st})^3\right] E\left[(\textbf{u}_k^T \textbf{x}_{st})^2 \textbf{x}_{st} \right]\\
             &+& 4 (1 - \alpha) \left(E\left[(\textbf{u}_k^T \textbf{x}_{st})^4 \right] - 3\right)  E\left[(\textbf{u}_k^T \textbf{x}_{st})^3 \textbf{x}_{st} \right],\ \ k=1,...,p.
\end{eqnarray*}
If we then write
$$\textbf{T}= (\textbf{T}_1,..., \textbf{T}_p)^T$$
we get, as in \cite{miettinen2014fourth}, the following.

\begin{mylemma}\label{lem:estimeq}
The estimating equations for $\textbf{U}$ in Definition \ref{def:sym_func} are
\[\textbf{U} \textbf{T}^T = \textbf{T} \textbf{U}^T \ \ \mbox{and} \ \  \textbf{U}\textbf{U}^T = \textbf{I}_p \]
or, equivalently,  $\textbf{U}=\textbf{T} (\textbf{T}^T \textbf{T})^{-1/2}$.
\end{mylemma}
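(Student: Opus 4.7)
The plan is to take the scalar stationarity equations $\textbf{u}_l^T \textbf{T}_k = \textbf{u}_k^T \textbf{T}_l$, $k,l=1,\ldots,p$, already derived in the paragraph just above the statement, assemble them into a matrix equation, and then convert that equation together with the orthogonality constraint into the closed-form polar-decomposition formula.

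For the matrix repackaging, I would do a routine index check: under the conventions $\textbf{U}=(\textbf{u}_1,\ldots,\textbf{u}_p)^T$ and $\textbf{T}=(\textbf{T}_1,\ldots,\textbf{T}_p)^T$, one has $(\textbf{U}\textbf{T}^T)_{kl}=\textbf{u}_k^T\textbf{T}_l$ and $(\textbf{T}\textbf{U}^T)_{kl}=\textbf{T}_k^T\textbf{u}_l=\textbf{u}_l^T\textbf{T}_k$, so the collection of scalar identities is entrywise equivalent to $\textbf{U}\textbf{T}^T=\textbf{T}\textbf{U}^T$. Combined with the orthogonality constraint $\textbf{U}\textbf{U}^T=\textbf{I}_p$ coming from $\textbf{U}\in\mathcal{U}$, this is the first pair of equations in the statement.

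For the equivalent closed form, the strategy is via polar decomposition. Multiplying $\textbf{U}\textbf{T}^T=\textbf{T}\textbf{U}^T$ on the left by $\textbf{U}^T$ and on the right by $\textbf{U}$, and using $\textbf{U}\textbf{U}^T=\textbf{I}_p$, gives $\textbf{T}^T\textbf{U}=\textbf{U}^T\textbf{T}$, so the matrix $\textbf{P}:=\textbf{U}^T\textbf{T}$ is symmetric and $\textbf{T}=\textbf{U}\textbf{P}$ is a polar-type factorization with $\textbf{U}$ orthogonal and $\textbf{P}$ symmetric. Squaring gives $\textbf{T}^T\textbf{T}=\textbf{P}\textbf{U}^T\textbf{U}\textbf{P}=\textbf{P}^2$, and selecting the positive semi-definite root yields $\textbf{P}=(\textbf{T}^T\textbf{T})^{1/2}$, hence $\textbf{U}=\textbf{T}(\textbf{T}^T\textbf{T})^{-1/2}$. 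The converse is a direct computation: for this $\textbf{U}$, symmetry of $(\textbf{T}^T\textbf{T})^{-1/2}$ makes both $\textbf{U}\textbf{T}^T$ and $\textbf{T}\textbf{U}^T$ equal to $\textbf{T}(\textbf{T}^T\textbf{T})^{-1/2}\textbf{T}^T$, and full rank of $\textbf{T}$ gives $\textbf{U}\textbf{U}^T=\textbf{T}(\textbf{T}^T\textbf{T})^{-1}\textbf{T}^T=\textbf{I}_p$.

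The only real subtlety is the sign ambiguity in the symmetric square root of $\textbf{T}^T\textbf{T}$: alternative symmetric roots differ from $(\textbf{T}^T\textbf{T})^{1/2}$ by conjugation by a sign matrix in $\mathcal{J}$ and yield unmixing matrices obtained by flipping signs of rows of the canonical $\textbf{U}$. This indeterminacy is precisely the one already tolerated in the independent component model, and the objective in Definition \ref{def:sym_func} is invariant under such row sign changes, so picking the positive semi-definite root in $\textbf{U}=\textbf{T}(\textbf{T}^T\textbf{T})^{-1/2}$ selects a canonical representative of the equivalence class of solutions. The full-rank hypothesis on $\textbf{T}$ needed at the very end holds at the population level because Assumption \ref{assu:zeroboth} prevents $\textbf{T}_k=\textbf{0}$ for more than one index, and at the sample level with probability tending to one.
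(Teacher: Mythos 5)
Your proposal is correct and follows essentially the same route as the paper's proof: element-wise identification of the scalar stationarity conditions with $\textbf{U}\textbf{T}^T=\textbf{T}\textbf{U}^T$, then squaring the symmetric matrix $\textbf{U}^T\textbf{T}$ to get $\textbf{T}^T\textbf{T}$ and taking the symmetric square root, with your converse check and rank discussion being welcome extra detail the paper omits. (Only the side remark that alternative symmetric roots arise by ``conjugation by a sign matrix in $\mathcal{J}$'' is slightly off --- the sign flips act in the eigenbasis of $\textbf{T}^T\textbf{T}$ --- but this does not affect the argument.)
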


The estimating equations then suggest a fixed-point algorithm with a step
\[\textbf{U} \leftarrow  \textbf{T} (\textbf{T}^T \textbf{T})^{-1/2}. \]
and  further provide the following results regarding the asymptotic behavior of the estimate $\hat{\textbf{W}}$ in the case $\boldsymbol{\Omega} = \textbf{I}_p$. .

\begin{mytheorem}\label{theo:sym_asymp}
(i) Let $\textbf{z}_1,...,\textbf{z}_n$ be a random sample from a distribution with finite sixth moments and satisfying assumptions \ref{assu:ind} and \ref{assu:zeroskew}. Then there exists a sequence of solutions based on skewness (that is, $\alpha = 1$) such that $\hat{\textbf{W}}\rightarrow_P \textbf{I}_p$ and
\begin{align*}
\sqrt{n} (\hat{w}_{kk} - 1) &= -\dfrac{1}{2} \sqrt{n} (\hat{s}_{kk} - 1) + o_P(1), \\
\sqrt{n} \hat{w}_{kl} &= \frac{\sqrt{n} \hat{\psi}_{1kl}}{\gamma_k^2 + \gamma_l^2} + o_P(1), \quad l \neq k,
\end{align*}
where $\hat{\psi}_{1kl} = \gamma_k \hat{r}_{kl} - \gamma_l \hat{r}_{lk} - \gamma_k^2 \hat{s}_{kl}$.
\newline

(ii) Let $\textbf{z}_1,...,\textbf{z}_n$ be a random sample from a distribution with finite eighth moments and satisfying assumptions  \ref{assu:ind} and \ref{assu:zerokurt}. Then there exists a sequence of solutions based on kurtosis (that is, $\alpha = 0$) such that $\hat{\textbf{W}}\rightarrow_P \textbf{I}_p$ and
\begin{align*}
\sqrt{n} (\hat{w}_{kk} - 1) &= -\dfrac{1}{2} \sqrt{n} (\hat{s}_{kk} - 1) + o_P(1), \\
\sqrt{n} \hat{w}_{kl} &= \frac{\sqrt{n} \hat{\psi}_{2kl}}{\kappa_k^2 + \kappa_l^2} + o_P(1), \quad l \neq k,
\end{align*}
where $\hat{\psi}_{2kl} = \kappa_k \hat{q}_{kl} - \kappa_l \hat{q}_{lk} - (\kappa_k \beta_k - 3\kappa_l) \hat{s}_{kl}$.
\newline

(iii) Let $\textbf{z}_1,...,\textbf{z}_n$ be a random sample from a distribution with finite eighth moments and satisfying assumptions \ref{assu:ind} and \ref{assu:zeroboth}. Then there exists a sequence of solutions based on both skewness and kurtosis such that $\hat{\textbf{W}}\rightarrow_P \textbf{I}_p$ and
\begin{align*}
\sqrt{n} (\hat{w}_{kk} - 1) &= -\dfrac{1}{2} \sqrt{n} (\hat{s}_{kk} - 1) + o_P(1), \\
\sqrt{n} \hat{w}_{kl} &= \frac{3 \alpha \sqrt{n} \hat{\psi}_{1kl} + 4 (1 - \alpha)  \sqrt{n} \hat{\psi}_{2kl}}{3 \alpha (\gamma_k^2 + \gamma_l^2) + 4 (1 - \alpha) (\kappa_k^2 + \kappa_l^2)} + o_P(1), \quad l \neq k,
\end{align*}
where $\alpha \in [0, 1]$ is the  weight given to skewness, and $\hat{\psi}_{1kl}$ is as in (i) and $\hat{\psi}_{2kl}$ as in (ii).
\end{mytheorem}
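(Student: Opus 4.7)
By the affine equivariance of Lemma \ref{lem:symm_ae} I may assume $\boldsymbol{\Omega}=\textbf{I}_p$ and write $\hat{\textbf{W}} = \hat{\textbf{U}}\hat{\boldsymbol{\Sigma}}^{-1/2}$. Under each of the assumptions \ref{assu:zeroskew}, \ref{assu:zerokurt}, \ref{assu:zeroboth}, the inequality of Theorem \ref{theo:sym_ineq} is strict outside the sign-permutation orbit of $\textbf{I}_p$, so a standard uniform-law-of-large-numbers argument on the compact group $\mathcal{U}$ produces a local maximizer $\hat{\textbf{U}}\rightarrow_P\textbf{I}_p$, and hence $\hat{\textbf{W}}\rightarrow_P\textbf{I}_p$. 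Orthogonality of $\hat{\textbf{U}}$ forces $\hat{u}_{kk} = 1 + O_P(n^{-1})$ and $\hat{u}_{kl} + \hat{u}_{lk} = O_P(n^{-1})$ for $k \neq l$.

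Combining these bounds with the standard expansion $\hat{\boldsymbol{\Sigma}}^{-1/2} = \textbf{I}_p - \tfrac{1}{2}(\hat{\boldsymbol{\Sigma}} - \textbf{I}_p) + O_P(n^{-1})$ and $\hat{\boldsymbol{\Sigma}}_{kl} = \hat{s}_{kl} + O_P(n^{-1})$ inside $\hat{\textbf{W}} = \hat{\textbf{U}}\hat{\boldsymbol{\Sigma}}^{-1/2}$ immediately yields the diagonal statement $\sqrt{n}(\hat{w}_{kk}-1) = -\tfrac{1}{2}\sqrt{n}(\hat{s}_{kk}-1) + o_P(1)$ common to all three parts, and reduces the off-diagonal problem to $\sqrt{n}\hat{w}_{kl} = \sqrt{n}\hat{u}_{kl} - \tfrac{1}{2}\sqrt{n}\hat{s}_{kl} + o_P(1)$ for $k\ne l$.

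For $\sqrt{n}\hat{u}_{kl}$ I linearize the estimating equation $\hat{\textbf{u}}_l^T\hat{\textbf{T}}_k = \hat{\textbf{u}}_k^T\hat{\textbf{T}}_l$ from Lemma \ref{lem:estimeq}. At the oracle point $(\textbf{e}_k,\textbf{z}_\cdot)$ the independence of the components of $\textbf{z}$ gives $\hat{\textbf{T}}_k \rightarrow_P c_k\textbf{e}_k$ with $c_k = 3\alpha\gamma_k^2 + 4(1-\alpha)\kappa_k\beta_k$, so the equation reduces at leading order to $\hat{T}_{k,l} - \hat{T}_{l,k} = (c_k + c_l)\hat{u}_{kl} + o_P(n^{-1/2})$ after using $\hat{u}_{lk} = -\hat{u}_{kl} + O_P(n^{-1})$. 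Joint first-order Taylor expansion of $\hat{T}_{k,l}$ in the perturbations $\hat{\textbf{u}}_k - \textbf{e}_k$ and $\hat{\boldsymbol{\Sigma}}^{-1/2} - \textbf{I}_p$ produces three kinds of $n^{-1/2}$ corrections: (a) raw sample-average contributions $3\alpha\gamma_k\sqrt{n}\hat{r}_{kl} + 4(1-\alpha)\kappa_k\sqrt{n}\hat{q}_{kl}$, all other candidate off-diagonal cross moments of $\textbf{z}$ vanishing by independence; (b) a feedback term $12(1-\alpha)\kappa_k\sqrt{n}\hat{u}_{kl}$ from the Jacobian of $\hat{\boldsymbol{\tau}}_k$ in the direction $\textbf{e}_l$, present only from the kurtosis piece because the analogous skewness derivative involves $E(z_k z_l^2)=0$; and (c) a whitening contribution proportional to $\sqrt{n}\hat{s}_{kl}$, with the $\sqrt{n}\bar{z}_l$ pieces cancelling against the $\gamma_k\sqrt{n}\bar{z}_l$ that enters through $n^{-1}\sum z_{ik}^3 z_{il}$. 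Subtracting the $k\leftrightarrow l$ analogue, moving the feedback to the coefficient side, and applying $\beta_k - 3 = \kappa_k$ collapses the denominator to $3\alpha(\gamma_k^2+\gamma_l^2) + 4(1-\alpha)(\kappa_k^2+\kappa_l^2)$, while the surviving numerator combines with the $-\tfrac{1}{2}\sqrt{n}\hat{s}_{kl}$ identified above into $3\alpha\sqrt{n}\hat{\psi}_{1kl} + 4(1-\alpha)\sqrt{n}\hat{\psi}_{2kl}$; this proves (iii), and (i) and (ii) are the specializations $\alpha=1$ and $\alpha=0$. The main obstacle is this last cancellation: the static leading constant $c_k+c_l = 4(\kappa_k\beta_k+\kappa_l\beta_l)$ alone does not produce the stated $4(\kappa_k^2+\kappa_l^2)$, and one must track the $12(1-\alpha)(\kappa_k+\kappa_l)$ Jacobian feedback carefully to see it combine with $\beta_k - 3 = \kappa_k$ into the clean squared form appearing in the denominator.
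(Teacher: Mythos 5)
Your plan is correct and follows essentially the same route as the paper's proof: reduction to $\boldsymbol{\Omega}=\textbf{I}_p$ by equivariance, consistency via uniform convergence of the sample objective over the compact set, the decomposition $\hat{\textbf{W}}=\hat{\textbf{U}}\hat{\textbf{S}}{}^{-1/2}$ giving the diagonal result and $\sqrt{n}\hat{w}_{kl}=\sqrt{n}\hat{u}_{kl}-\tfrac12\sqrt{n}\hat{s}_{kl}+o_P(1)$ (the paper's Lemma \ref{lem:app_diag}), and linearization of the symmetric estimating equations $\hat{\textbf{u}}_l^T\hat{\textbf{T}}_k=\hat{\textbf{u}}_k^T\hat{\textbf{T}}_l$. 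You have also correctly identified the one genuinely delicate point, namely that the Jacobian feedback term $3\kappa_k\sqrt{n}\hat{w}_{kl}$ from the fourth-moment part must be combined with $\beta_k-3=\kappa_k$ and the antisymmetry relation $\sqrt{n}\hat{w}_{kl}+\sqrt{n}\hat{w}_{lk}=-\sqrt{n}\hat{s}_{kl}+o_P(1)$ to collapse the denominator to $3\alpha(\gamma_k^2+\gamma_l^2)+4(1-\alpha)(\kappa_k^2+\kappa_l^2)$, which is exactly the mechanism in the paper.
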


\begin{mycorollary}\label{cor:sym_asymp}
(i) Under the assumptions of Theorem \ref{theo:sym_asymp}(i) the limiting distribution of $\sqrt{n} \, vec(\hat{\textbf{W}} - \textbf{I}_p)$ is multivariate normal with mean vector $\textbf{0}$ and the following asymptotic variances.
\begin{alignat*}{3}
&ASV&(\hat{w}_{kk}) &= \frac{\kappa_k + 2}{4}, &\\
&ASV&(\hat{w}_{kl}) &= \frac{\zeta_{11}}{(\gamma_k^2 + \gamma_l ^2)^2}, &\quad k \neq l,
\end{alignat*}
where $\zeta_{11} = \gamma_k^2 (\nu_k - \gamma_k^2) + \gamma_l^2 (\nu_l - \gamma_l^2) + \gamma_l^4$.
\newline

(ii) Under the assumptions of Theorem \ref{theo:sym_asymp}(ii) the limiting distribution of $\sqrt{n} \, vec(\hat{\textbf{W}} - \textbf{I}_p)$ is multivariate normal with mean vector $\textbf{0}$ and the following asymptotic variances.
\begin{alignat*}{3}
&ASV&(\hat{w}_{kk}) &= \frac{\kappa_k + 2}{4}, &\\
&ASV&(\hat{w}_{kl}) &= \frac{\zeta_{22}}{(\kappa_k^2 + \kappa_l ^2)^2}, &\quad k \neq l,
\end{alignat*}
where $\zeta_{22} = \kappa_k^2 (\omega_k - \beta_k^2) + \kappa_l^2 (\omega_l - \beta_l^2) + \kappa_l^4$.
\newline

(iii) Under the assumptions of Theorem \ref{theo:sym_asymp}(iii) the limiting distribution of $\sqrt{n} \, vec(\hat{\textbf{W}} - \textbf{I}_p)$ is multivariate normal with mean vector $\textbf{0}$ and the following asymptotic variances.
\begin{alignat*}{3}
&ASV&(\hat{w}_{kk}) &= \frac{\kappa_k + 2}{4}, &\\
&ASV&(\hat{w}_{kl}) &= \frac{9 \alpha^2 \zeta_{11} + 16 (1 - \alpha)^2 \zeta_{22} + 24 \alpha (1 - \alpha) \zeta_{12}}{(3 \alpha (\gamma_k^2 + \gamma_l^2) + 4 (1 - \alpha) (\kappa_k^2 + \kappa_l^2))^2}, &\quad k \neq l,
\end{alignat*}
where $\alpha \in [0, 1]$ is the weight given to skewness, and $\zeta_{11}$ is as in (i), $\zeta_{22}$ as in (ii) and $\zeta_{12} = \gamma_k \kappa_k (\eta_k - \gamma_k \beta_k) +  \gamma_l \kappa_l (\eta_l - \gamma_l \beta_l) + \gamma_l^2 \kappa_l^2$.
\end{mycorollary}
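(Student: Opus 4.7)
The plan is to read the asymptotic variances directly off the linearizations provided by Theorem \ref{theo:sym_asymp}. That theorem expresses $\sqrt{n}(\hat{w}_{kk}-1)$ and $\sqrt{n}\hat{w}_{kl}$ as linear combinations of the statistics $\hat{s}_{kk}-1$, $\hat{s}_{kl}$, $\hat{r}_{kl}$, $\hat{r}_{lk}$, $\hat{q}_{kl}$, $\hat{q}_{lk}$ up to $o_P(1)$ remainders. Since the multivariate central limit theorem (noted after Table \ref{tab:cov}) gives joint asymptotic normality of these statistics with zero means and the covariance structure listed there, Slutsky's theorem yields that $\sqrt{n}\,vec(\hat{\textbf{W}} - \textbf{I}_p)$ is asymptotically centered multivariate normal. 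What remains is to identify the marginal variances of these linear combinations, after which the stated formulas follow by dividing by the squared denominators appearing in Theorem \ref{theo:sym_asymp}.

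For the diagonal entries I obtain $ASV(\hat{w}_{kk}) = \tfrac{1}{4}Var(\sqrt{n}(\hat{s}_{kk}-1)) = \tfrac{1}{4}E[(z_{ik}^2-1)^2] = \tfrac{1}{4}(\kappa_k+2)$, a value common to all three cases. For the off-diagonals in case (i), expanding $Var(\gamma_k\sqrt{n}\hat{r}_{kl} - \gamma_l\sqrt{n}\hat{r}_{lk} - \gamma_k^2\sqrt{n}\hat{s}_{kl})$ using the six relevant entries of Table \ref{tab:cov} produces the cancellations $-2\gamma_k^2\gamma_l^2 + 2\gamma_k^2\gamma_l^2 = 0$ and $\gamma_k^4 - 2\gamma_k^4 = -\gamma_k^4$, leaving $\gamma_k^2 \nu_k + \gamma_l^2 \nu_l - \gamma_k^4$. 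This equals the stated $\zeta_{11}$ rewritten as $\gamma_k^2(\nu_k - \gamma_k^2) + \gamma_l^2(\nu_l - \gamma_l^2) + \gamma_l^4$. Case (ii) is entirely parallel, using the $\hat{q}$-entries of Table \ref{tab:cov} together with the identity $\beta_l = \kappa_l + 3$ to collapse the $(\kappa_k\beta_k - 3\kappa_l)$ cross terms down to $\kappa_k^2(\omega_k - \beta_k^2) + \kappa_l^2(\omega_l - \beta_l^2) + \kappa_l^4$.

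Case (iii) additionally requires the cross-covariance $Cov(\sqrt{n}\hat{\psi}_{1kl}, \sqrt{n}\hat{\psi}_{2kl})$, and this is the main bookkeeping obstacle. Its expansion produces nine cross-product terms coming from the $(\hat{r}_{\cdot\cdot},\hat{q}_{\cdot\cdot})$, $(\hat{r}_{\cdot\cdot},\hat{s}_{kl})$ and $(\hat{q}_{\cdot\cdot},\hat{s}_{kl})$ blocks of Table \ref{tab:cov}. Two pairs cancel outright, namely $\pm\gamma_k^2\kappa_l\beta_l$ and $\pm\gamma_k^2(\kappa_k\beta_k - 3\kappa_l)$; in the remaining $\gamma_l^2$-block, the contribution $\gamma_l^2(\kappa_k\beta_k-3\kappa_l) - \gamma_l^2\kappa_k\beta_k$ collapses to $-3\gamma_l^2\kappa_l$, which one recognizes via $\beta_l = \kappa_l + 3$ as $\gamma_l^2\kappa_l(\kappa_l - \beta_l) = \gamma_l^2\kappa_l^2 - \gamma_l^2\kappa_l\beta_l$. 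The surviving total is exactly $\gamma_k\kappa_k(\eta_k - \gamma_k\beta_k) + \gamma_l\kappa_l(\eta_l - \gamma_l\beta_l) + \gamma_l^2\kappa_l^2 = \zeta_{12}$. Assembling $9\alpha^2\zeta_{11} + 16(1-\alpha)^2\zeta_{22} + 24\alpha(1-\alpha)\zeta_{12}$ and dividing by the squared denominator in Theorem \ref{theo:sym_asymp}(iii) delivers the final claim.
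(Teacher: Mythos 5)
Your proposal is correct and follows exactly the route the paper intends: the corollary is read off from the linearizations in Theorem \ref{theo:sym_asymp} together with the joint limiting normality and the covariance entries of Table \ref{tab:cov}, and your variance and cross-covariance computations (including the cancellations via $\beta_l = \kappa_l + 3$) check out against the stated $\zeta_{11}$, $\zeta_{22}$ and $\zeta_{12}$. The paper leaves this bookkeeping implicit, so your write-up simply makes explicit what the authors omit.
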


\section{Multivariate third and fourth cumulants}\label{sec:multi}

As the previous methods of estimating the unmixing matrix utilized only the marginal third and fourth cumulants of the components, a natural question is whether the use of multivariate moments has any benefits.  We therefore consider the following sets of matrices, capturing all joint third and fourth cumulants  of the random $p$-vector  $\textbf{x}$  with $E(\textbf{x})=\textbf{0}$.
\begin{eqnarray*}
\textbf{C}^{3i}(\textbf{x}) & =&  E\left[ x_i \cdot \textbf{x} \textbf{x}^T \right], \quad i=1,...,p, \ \mbox{and}\\
\textbf{C}^{4ij}(\textbf{x}) & =&  E\left[ x_i x_j \cdot\textbf{x}  \textbf{x}^T \right]- E(x_i x_j) E(\textbf{x} \textbf{x}^T)  \\
                             & -& E(x_i \cdot \textbf{x}) E(x_j \cdot \textbf{x}^T) -  E(x_j \cdot \textbf{x})E(x_i \cdot \textbf{x}^T), \quad i,j=1,...,p.
\end{eqnarray*}
Evaluating matrices $\textbf{C}^{3i}(\textbf{z})$ and $\textbf{C}^{4ij}(\textbf{z})$ gives
\[\textbf{C}^{3i}(\textbf{z}) = \gamma_i \textbf{E}^{ii} \quad \text{and} \quad \textbf{C}^{4ij}(\textbf{z}) = \delta_{ij} \kappa_i \textbf{E}^{ii}, \]
showing that both $\textbf{C}^{3i}(\textbf{z})$ and $\textbf{C}^{4ij}(\textbf{z})$ are diagonal for all $i,j=1,...,p$. Based upon them we can construct two matrices combining specific subsets of third and fourth joint cumulants, which we will call \textit{compound cumulant matrices}:
\[
\textbf{C}^3(\textbf{z}) = \sum_{i=1}^p \textbf{C}^{3i}(\textbf{z}) = \sum_{i=1}^p \gamma_i \textbf{E}^{ii} \quad \text{and} \quad \textbf{C}^4(\textbf{z}) = \sum_{i=1}^p \textbf{C}^{4ii}(\textbf{z}) = \sum_{i=1}^p \kappa_i \textbf{E}^{ii}. \]
The next theorem then gives us two viable ways of recovering the independent components using the previously defined cumulant matrices.
\begin{mytheorem}\label{theo:diag}
Let $\textbf{z} \in \mathbb{R}^p$ have independent components with $E(\textbf{z}) = \textbf{0}$ and $Cov(\textbf{z}) = \textbf{I}_p$.
Then for all orthogonal $\textbf{U}$, the eigenvectors of the symmetric matrices
$\textbf{C}^{\,3i}( \textbf{U}\textbf{z})$, $\textbf{C}^{\,4ij}(\textbf{U}\textbf{z})$, $\textbf{C}^{\,3}(\textbf{U}\textbf{z})$ and $\textbf{C}^{\,4}(\textbf{U}\textbf{z})$, $i,j=1,...,p$ are the columns of $\textbf{U}$.
\end{mytheorem}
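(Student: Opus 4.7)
The approach is uniform across all four families of matrices: for each, I would show that on $\textbf{y}=\textbf{U}\textbf{z}$ the matrix factors as $\textbf{U}\,\textbf{D}\,\textbf{U}^T$ for some diagonal $\textbf{D}$. Since $\textbf{U}$ is orthogonal, such a factorization is already an eigendecomposition, whence the columns of $\textbf{U}$ form an orthonormal set of eigenvectors.

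First I would handle the skewness matrix $\textbf{C}^{3i}(\textbf{U}\textbf{z})$. Denoting the $i$-th row of $\textbf{U}$ by $\textbf{u}_i^T$, pull $\textbf{U}$ and $\textbf{U}^T$ outside the expectation:
\[
\textbf{C}^{3i}(\textbf{U}\textbf{z}) = E\bigl[(\textbf{u}_i^T\textbf{z})\,\textbf{U}\textbf{z}\textbf{z}^T\textbf{U}^T\bigr] = \textbf{U}\,E\bigl[(\textbf{u}_i^T\textbf{z})\,\textbf{z}\textbf{z}^T\bigr]\,\textbf{U}^T.
\]
Since $z_1,\ldots,z_p$ are independent with zero means, the mixed third moment $E[z_jz_kz_l]$ vanishes unless $j=k=l$, in which case it equals $\gamma_j$. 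Thus the inner matrix has $(k,l)$-entry $u_{ik}\gamma_k\,\delta_{kl}$ and is diagonal, which gives the desired form.

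Next, for $\textbf{C}^{4ij}(\textbf{U}\textbf{z})$ I would expand each of the four terms in its definition under $\textbf{y}=\textbf{U}\textbf{z}$. Writing $y_iy_j = \sum_{k,l}u_{ik}u_{jl}\,z_kz_l$ and using the identities $E[\textbf{y}\textbf{y}^T] = \textbf{U}\textbf{U}^T$ and $E[y_i\textbf{y}] = \textbf{U}\,E[(\textbf{u}_i^T\textbf{z})\,\textbf{z}]$, one factors $\textbf{U}$ to the left and $\textbf{U}^T$ to the right of every summand. Collecting pieces yields the multilinear identity
\[
\textbf{C}^{4ij}(\textbf{U}\textbf{z}) \;=\; \textbf{U}\Bigl(\sum_{k,l}u_{ik}u_{jl}\,\textbf{C}^{4kl}(\textbf{z})\Bigr)\textbf{U}^T.
\]
Because $\textbf{C}^{4kl}(\textbf{z}) = \delta_{kl}\kappa_k\textbf{E}^{kk}$, the inner sum collapses to $\sum_k u_{ik}u_{jk}\kappa_k\textbf{E}^{kk}$, which is again diagonal. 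The compound matrices $\textbf{C}^3(\textbf{U}\textbf{z}) = \sum_i\textbf{C}^{3i}(\textbf{U}\textbf{z})$ and $\textbf{C}^4(\textbf{U}\textbf{z}) = \sum_i\textbf{C}^{4ii}(\textbf{U}\textbf{z})$ inherit the diagonalization by linearity, since any sum of matrices of the form $\textbf{U}\,\textbf{D}\,\textbf{U}^T$ with diagonal $\textbf{D}$ remains of that form.

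The main obstacle is the bookkeeping in the $\textbf{C}^{4ij}$ step: one must verify that the three subtracted correction terms in its definition assemble with the leading term $E[y_iy_j\,\textbf{y}\textbf{y}^T]$ so as to reproduce exactly $\textbf{C}^{4kl}(\textbf{z})$ inside the double sum, with $\textbf{U}$ and $\textbf{U}^T$ cleanly on the outside. This is a routine expansion resting only on $E[\textbf{y}\textbf{y}^T] = \textbf{I}_p$ and the independence structure of $\textbf{z}$, and requires no ideas beyond careful algebra. Note that possible repetitions among the diagonal entries of $\textbf{D}$ merely mean that some eigenvectors are only determined up to rotation within an eigenspace; the columns of $\textbf{U}$ are still one valid orthonormal eigenbasis in all cases, which is exactly what the statement asserts.
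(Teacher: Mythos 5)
Your proof is correct and takes essentially the same route as the paper: factor $\textbf{U}$ and $\textbf{U}^T$ out of the expectations, use multilinearity of cumulants together with independence to see that the inner matrix is diagonal ($\textbf{C}^{3k}(\textbf{z})=\gamma_k\textbf{E}^{kk}$, $\textbf{C}^{4kl}(\textbf{z})=\delta_{kl}\kappa_k\textbf{E}^{kk}$), and read off the eigendecomposition. Your packaging of the fourth-order step as the multilinear identity $\textbf{C}^{4ij}(\textbf{U}\textbf{z})=\textbf{U}\bigl(\sum_{k,l}u_{ik}u_{jl}\textbf{C}^{4kl}(\textbf{z})\bigr)\textbf{U}^T$ is just a cleaner statement of the same term-by-term expansion the paper performs.
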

Theorem \ref{theo:diag} says that the rotation giving the independent components from the standardized observations is such that it diagonalizes all the matrices $\textbf{C}^3(\textbf{x}_{st}), \textbf{C}^4(\textbf{x}_{st}), \textbf{C}^{3i}(\textbf{x}_{st})$ and $\textbf{C}^{4ij}(\textbf{x}_{st})$, $i,j=1,...,p$. To recover the independent components in practice we thus want to find a rotation $\textbf{U} \in \mathcal{U}$ that simultaneously makes all the matrices $\textbf{U}\textbf{C}_s\textbf{U}^T$, where $\textbf{C}_s,\, s=1,...,S$, is some subset of the previous matrices, as diagonal as possible.

One way of accomplishing this is based on the observation that for any family of matrices $\textbf{C}_s, s=1,...,S$, and any $\textbf{U} \in \mathcal{U}$ we have
\[\sum_{s=1}^S\|\text{diag}(\textbf{U}\textbf{C}_s\textbf{U}^T)\|^2 + \sum_{s=1}^S\|\text{off}(\textbf{U}\textbf{C}_s\textbf{U}^T)\|^2 = \sum_{s=1}^S \| \textbf{C}_s \|^2, \]
implying that the joint approximate diagonalization can be preformed by finding $\textbf{U} \in \mathcal{U}$ that maximizes the sum $\sum_{s=1}^S\|\text{diag}(\textbf{U}\textbf{C}_s\textbf{U}^T)\|^2$. The process can then be thought as a sort of ``joint eigendecomposition''. The concept is not new and has been used before e.g. in \citet{cardoso1993blind} and in \citet{moreau2001generalization}.




\subsection{Using compound cumulant matrices}\label{subsec:fobi}

Having already justified the working of the following methods, we first present the use of compound cumulant matrices $\textbf{C}^3$ and $\textbf{C}^4$ in recovering the independent components.

In order to obtain an affine equivariant procedure this time,  we must use a somewhat unorthodox standardization. Namely, we pretransform the data by an arbitrary independent component functional.  This is necessitated by the ``bad behavior'' of the compound matrix of third cumulants $\textbf{C}^3$. Writing the IC functional in the form $(\boldsymbol{\Sigma}^*)^{-1/2}=\textbf{U}^* \boldsymbol{\Sigma}^{-1/2}$ for some $\textbf{U}^* \in \mathcal{U}$, makes the standardization then correspond to the transformation $\textbf{x} \mapsto \textbf{x}_{st}^* = (\boldsymbol{\Sigma}^*)^{-1/2} ( \textbf{x} - \boldsymbol{\mu})$. Note that $(\boldsymbol{\Sigma}^*)^{-1/2}\boldsymbol{\Sigma}(\boldsymbol{\Sigma}^*)^{-1/2}=\textbf{I}_p$ so that $(\boldsymbol{\Sigma}^*)^{-1/2}$ is just a certain asymmetric version of $\boldsymbol{\Sigma}^{-1/2}$. Surprisingly, the limiting behavior of the estimates then does not depend on the root-$n$ consistent choice of $(\boldsymbol{\Sigma}^*)^{-1/2}$.
 Note that this idea to achieve affine equivariance for another IC method was also used in \cite{MiettinenNordhausenOjaTaskinen:2013}.


\begin{mydefinition} \label{def:fobi_func}
The compound cumulant functional based on both third and fourth cumulants is a functional $\textbf{W} (F_\textbf{x}) = \textbf{U} (\boldsymbol{\Sigma}^*)^{-1/2}$, where $(\boldsymbol{\Sigma}^*)^{-1/2}$ is the standardizing IC functional and the orthogonal matrix $\textbf{U}$ is found as
\[ \textbf{U} = \underset{\textbf{U} \in \mathcal{U}}{\emph{argmax}} \left( \alpha \| diag( \textbf{U} \textbf{C}^{\,3}(\textbf{x}_{st}^*) \textbf{U}^T ) \|^2 + (1 - \alpha) \| diag( \textbf{U} \textbf{C}^{\,4}(\textbf{x}_{st}^*) \textbf{U}^T ) \|^2 \right), \]
where $\alpha \in [0, 1]$ is the proportion of weight given to skewness.
\end{mydefinition}

Letting then $\alpha = 1$ or $\alpha = 0$ and using the properties of the matrices $\textbf{C}^3$ and $\textbf{C}^4$ yields the following corollary.

\begin{mycorollary}\label{cor:fobi_xor}
(i) The compound cumulant functional  based on third cumulants (that is, $\alpha = 1$) is  a functional  $\textbf{W} (F_\textbf{x}) = \textbf{U} (\boldsymbol{\Sigma}^*)^{-1/2}$, where   $\textbf{U}$ has the eigenvectors of $\textbf{C}^{\,3}(\textbf{x}_{st}^*)$ as its rows.
\newline
(ii) The compound cumulant functional  based on fourth cumulants (that is, $\alpha = 0$) is a functional  $\textbf{W} (F_\textbf{x}) = \textbf{U} (\boldsymbol{\Sigma}^*)^{-1/2}$, where  $\textbf{U}$ has the eigenvectors of $\textbf{C}^{\,4}(\textbf{x}_{st}^*)$ as its rows.
\end{mycorollary}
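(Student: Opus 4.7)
The plan is to specialize Definition \ref{def:fobi_func} to $\alpha=1$ and $\alpha=0$ and observe that the objective then reduces to the maximization of the squared diagonal Frobenius norm of a single symmetric matrix, which is the classical variational characterization of the eigendecomposition. The key tool is the Pythagorean identity already displayed just above the definition: for any symmetric $\textbf{C}$ and any $\textbf{U}\in\mathcal{U}$,
\[
\|\mathrm{diag}(\textbf{U}\textbf{C}\textbf{U}^T)\|^2 + \|\mathrm{off}(\textbf{U}\textbf{C}\textbf{U}^T)\|^2 = \|\textbf{C}\|^2,
\]
which is just orthogonal invariance of the Frobenius norm applied to $\textbf{U}\textbf{C}\textbf{U}^T$. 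Since the right-hand side is independent of $\textbf{U}$, maximizing the diagonal term is equivalent to minimizing the nonnegative off-diagonal term.

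First I would check that both $\textbf{C}^{\,3}(\textbf{x}_{st}^*)$ and $\textbf{C}^{\,4}(\textbf{x}_{st}^*)$ are real symmetric, which is immediate from their defining formulas since each summand is an expectation of a symmetric outer product, and since the covariance-correction terms in $\textbf{C}^{\,4ij}$ are symmetric in $\textbf{x}\textbf{x}^T$. Therefore each admits a real orthogonal eigendecomposition. The minimum value $0$ of $\|\mathrm{off}(\textbf{U}\textbf{C}\textbf{U}^T)\|^2$ is attained precisely when $\textbf{U}\textbf{C}\textbf{U}^T$ is diagonal, i.e.\ when the rows of $\textbf{U}$ form an orthonormal eigenbasis of $\textbf{C}$. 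Setting $\alpha=1$ in Definition \ref{def:fobi_func} suppresses the fourth-cumulant term and reduces the objective to $\|\mathrm{diag}(\textbf{U}\textbf{C}^{\,3}(\textbf{x}_{st}^*)\textbf{U}^T)\|^2$, whose maximizers are the orthogonal matrices with rows equal to the eigenvectors of $\textbf{C}^{\,3}(\textbf{x}_{st}^*)$; this proves (i). Part (ii) is identical with $\alpha=0$ and $\textbf{C}^{\,4}$ replacing $\textbf{C}^{\,3}$.

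The only nontrivial point is uniqueness of the maximizing $\textbf{U}$ up to the permutation/sign indeterminacy inherent in $\textbf{W}(F)$. By Theorem \ref{theo:diag}, after passing through the preliminary IC pretransformation $(\boldsymbol{\Sigma}^*)^{-1/2}$, the relevant compound cumulant matrices are orthogonally similar to $\mathrm{diag}(\gamma_1,\dots,\gamma_p)$ and $\mathrm{diag}(\kappa_1,\dots,\kappa_p)$ respectively, so their eigenvalues are exactly the $\gamma_k$'s and $\kappa_k$'s. Unique recovery of the eigenvectors (up to sign) therefore requires the simplicity of these eigenvalues, which is Assumption \ref{assu:distskew} for part (i) and Assumption \ref{assu:distkurt} for part (ii); this is the main thing to be explicit about. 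Aside from this, the corollary is essentially the statement that when the joint-diagonalization family collapses to a single symmetric matrix, the approximate joint diagonalization collapses to ordinary spectral decomposition.
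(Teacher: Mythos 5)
Your argument is correct. It rests on the same underlying fact as the paper's proof --- that when the joint-diagonalization family degenerates to a single symmetric matrix, maximizing $\|\mathrm{diag}(\textbf{U}\textbf{C}\textbf{U}^T)\|^2$ over $\textbf{U}\in\mathcal{U}$ is achieved exactly by an orthogonal eigenbasis --- but the two proofs place the weight differently. You make the variational step fully explicit: the Pythagorean identity reduces the maximization to minimizing $\|\mathrm{off}(\textbf{U}\textbf{C}\textbf{U}^T)\|^2$, whose zero is attained precisely at the eigendecomposition, which exists because $\textbf{C}^{\,3}(\textbf{x}_{st}^*)$ and $\textbf{C}^{\,4}(\textbf{x}_{st}^*)$ are symmetric. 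This proves the literal statement of the corollary without invoking the IC model at all. The paper instead leaves the variational step implicit (it only gestures at the identity displayed before Definition \ref{def:fobi_func}) and spends its effort on the IC-model-specific content: via Theorem \ref{theo:diag} it exhibits the explicit diagonalization $\textbf{C}^3(\textbf{x}_{st}^*)=\textbf{U}^*\textbf{U}^T\bigl(\sum_i \textbf{D}_i\bigr)\textbf{U}\textbf{U}^{*T}$, thereby showing not only that the maximizer is the eigenbasis but that the resulting rotation $\textbf{U}\textbf{U}^{*T}$ maps $\textbf{x}_{st}^*$ back to $\textbf{z}$ (Fisher consistency), with the fourth-cumulant case delegated to the FOBI literature. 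Your closing remark that unique recovery (up to sign and permutation) requires simple eigenvalues, i.e.\ Assumptions \ref{assu:distskew} and \ref{assu:distkurt}, is a point the paper's proof of the corollary does not make explicitly but which it does impose later in Theorem \ref{theo:fobi_asymp}; flagging it here is a genuine improvement in precision. In short: your route buys a self-contained and more elementary proof of the stated corollary, while the paper's route buys the additional identification of the eigenvectors with the de-mixing rotation.
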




As already stated, the different standardization mechanism guarantees the affine equivariance of the procedure.


\begin{mylemma}\label{lem:fobi_ae}
The compound cumulant functional $\textbf{W}(F_\textbf{x})$ in Definition \ref{def:fobi_func} is an independent component functional for every $\alpha \in [0, 1]$.
\end{mylemma}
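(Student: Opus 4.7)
The plan is to check the two defining properties of an IC functional---Fisher consistency to the independent components under the IC model, and affine equivariance for arbitrary $\textbf{x}$. Both arguments exploit the fact that the pretransformation $(\boldsymbol{\Sigma}^*)^{-1/2}$ is itself assumed to be an IC functional, which is precisely the motivation for the asymmetric standardization. Preliminarily, one records the transformation rules for the compound matrices: a direct expansion yields $\textbf{C}^4(\textbf{V}\textbf{w}) = \textbf{V}\textbf{C}^4(\textbf{w})\textbf{V}^T$ for every orthogonal $\textbf{V}$, while $\textbf{C}^3(\textbf{V}\textbf{w}) = \textbf{V}\bigl(\sum_a (\sum_i V_{ia})\textbf{C}^{3a}(\textbf{w})\bigr)\textbf{V}^T$, which collapses cleanly only when $\textbf{V} = \textbf{P}\textbf{J} \in \mathcal{P}\mathcal{J}$, in which case the inner factor is $j_a$.

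For Fisher consistency (part (i)), the IC-functional property of the pretransformation gives $\textbf{x}_{st}^* = \textbf{P}_0\textbf{J}_0\textbf{z}$ for some $\textbf{P}_0 \in \mathcal{P}$, $\textbf{J}_0 \in \mathcal{J}$ under the IC model. Combining the transformation rules above with $\textbf{C}^{3k}(\textbf{z}) = \gamma_k\textbf{E}^{kk}$, $\textbf{C}^{4ij}(\textbf{z}) = \delta_{ij}\kappa_i\textbf{E}^{ii}$, and $\textbf{J}_0\textbf{E}^{kk}\textbf{J}_0 = \textbf{E}^{kk}$, one obtains
\[
\textbf{C}^3(\textbf{x}_{st}^*) = \textbf{P}_0\Bigl(\sum_k j_{0k}\gamma_k\textbf{E}^{kk}\Bigr)\textbf{P}_0^T, \qquad \textbf{C}^4(\textbf{x}_{st}^*) = \textbf{P}_0\Bigl(\sum_k\kappa_k\textbf{E}^{kk}\Bigr)\textbf{P}_0^T,
\]
so both matrices are simultaneously orthogonally diagonalized by $\textbf{U} = \textbf{P}_0^T$. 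The Pythagorean-type identity displayed in Section \ref{sec:multi} just before Section \ref{subsec:fobi} converts the maximization in Definition \ref{def:fobi_func} into a simultaneous minimization of the weighted off-diagonal Frobenius norms; both vanish at $\textbf{U} = \textbf{P}_0^T$, and under the relevant distinctness assumptions on the $\gamma_k, \kappa_k$ the full maximizer set is exactly $\{\textbf{P}\textbf{J}\textbf{P}_0^T : \textbf{P}\in\mathcal{P}, \textbf{J}\in\mathcal{J}\}$. Consequently $\textbf{W}(F_\textbf{x})\textbf{x} = \textbf{U}\textbf{x}_{st}^* = \textbf{P}\textbf{J}\textbf{J}_0\textbf{z}$ recovers the independent components up to sign and permutation.

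For affine equivariance (part (ii)), let $\textbf{y}=\textbf{A}\textbf{x}+\textbf{b}$. Because $(\boldsymbol{\Sigma}^*)^{-1/2}$ is affine equivariant in the IC-functional sense, $\textbf{y}_{st}^* = \textbf{V}\textbf{x}_{st}^*$ with $\textbf{V} = \textbf{P}_1\textbf{J}_1$ for some $\textbf{P}_1, \textbf{J}_1$. Applying the transformation rules, the fourth-cumulant term of the $\textbf{y}$-objective becomes $\textbf{V}\textbf{C}^4(\textbf{x}_{st}^*)\textbf{V}^T$, and for the third-cumulant term the signed coefficients $j_{1a}$ flip the signs of individual summands of the resulting modified compound matrix---but since only the squared diagonal entries enter the objective of Definition \ref{def:fobi_func}, these sign flips leave the objective value invariant under the substitution $\textbf{U}\mapsto\textbf{U}\textbf{V}^T$. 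Hence the maximizers of the $\textbf{y}$-problem are exactly the right-translates of the $\textbf{x}$-maximizers by $\textbf{V}^T$ (up to the usual left $\mathcal{P}\mathcal{J}$ ambiguity), and a short absorption step using that $\textbf{U}_\textbf{x}\textbf{J}_1\textbf{U}_\textbf{x}^T$ is itself in $\mathcal{J}$ gives $\textbf{W}(F_\textbf{y})\textbf{y} = \textbf{P}\textbf{J}\textbf{W}(F_\textbf{x})\textbf{x}$.

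The main obstacle I expect is precisely the anomalous transformation law of $\textbf{C}^3$, which is what forced the authors to introduce the asymmetric standardization $(\boldsymbol{\Sigma}^*)^{-1/2}$ rather than the symmetric $\boldsymbol{\Sigma}^{-1/2}$. The key observation rescuing equivariance is that the squared-diagonal Frobenius norm is insensitive to sign flips of individual diagonal entries, so although $\textbf{C}^3$ transforms via the sign-weighted sum $\sum_a j_a \textbf{C}^{3a}$ instead of $\textbf{C}^3$ itself, the optimization value at the corresponding rotation is unchanged, and the set of optimizers is related by right multiplication by $\textbf{V}^T$---exactly what is needed to match the IC-functional equivariance relation.
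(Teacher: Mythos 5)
The paper gives no explicit proof of this lemma (it is simply asserted after the discussion of the asymmetric standardization, with the supporting diagonalization computations appearing in the proofs of Theorem \ref{theo:diag} and Corollary \ref{cor:fobi_xor}), so your argument has to stand on its own. Your part (i) is essentially right and consistent with the paper's implicit reasoning: under the IC model the standardizing IC functional gives $\textbf{x}_{st}^* = \textbf{P}_0\textbf{J}_0\textbf{z}$, both compound matrices become $\textbf{P}_0(\text{diagonal})\textbf{P}_0^T$, and the Pythagorean identity identifies the maximizers with the joint diagonalizers (modulo the distinctness caveat you acknowledge).

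The gap is in part (ii). You assert that because only squared diagonal entries enter the objective, flipping the signs of the individual summands $\textbf{C}^{3a}$ leaves the objective value invariant under $\textbf{U}\mapsto\textbf{U}\textbf{V}^T$. That step is false: the $d$th diagonal entry of $\textbf{U}\textbf{C}^{3}\textbf{U}^T$ is the \emph{sum} $\sum_a c_a$ with $c_a = \textbf{u}_d^T\textbf{C}^{3a}\textbf{u}_d$, and $\left(\sum_a j_a c_a\right)^2 \neq \left(\sum_a c_a\right)^2$ in general --- squaring is insensitive to a global sign change, not to sign changes of individual summands. Hence for a generic distribution $F$ the objective built from $\textbf{y}_{st}^* = \textbf{P}_1\textbf{J}_1\textbf{x}_{st}^*$ is a genuinely different function of $\textbf{U}$ than the $\textbf{x}$-objective composed with right multiplication by $\textbf{P}_1\textbf{J}_1$, and your ``right-translation of the maximizer set'' conclusion does not follow; this failure is exactly the ``bad behavior'' of $\textbf{C}^3$ that the prestandardization is meant to contain. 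The argument can be repaired, but only under the IC model: there the matrices $\textbf{C}^{3a}(\textbf{x}_{st}^*)$ are rank-one and supported on distinct diagonal positions of a common eigenbasis, so they are mutually orthogonal, $\|\sum_a j_a\textbf{C}^{3a}\|^2 = \|\sum_a\textbf{C}^{3a}\|^2$, and the attainable maximum and the set of maximizing rotations are unchanged by the signs. Equivalently, and more cheaply, under the IC model both $\textbf{W}(F_{\textbf{Ax}+\textbf{b}})\textbf{Ax}$ and $\textbf{W}(F_\textbf{x})\textbf{x}$ recover $\textbf{z}$ up to $\mathcal{P}\mathcal{J}$ by your part (i), and are therefore related by an element of $\mathcal{P}\mathcal{J}$ --- which is all the paper ever uses (to reduce the asymptotics to $\boldsymbol{\Omega}=\textbf{I}_p$). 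You should either restrict the equivariance claim to the IC model and make the disjoint-support (or Fisher-consistency) argument explicit, or explain why the sign-flip invariance holds for arbitrary $F$; as written it does not.
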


\begin{myremark}
We implicitly assume here that the IC functional used in the standardization exists and is well-defined. The extra assumptions needed for its existence and root-$n$ consistency can be seen as the price we have to pay for making the compound cumulant method affine equivariant.
\end{myremark}

\begin{myremark}
If $\alpha=0$ and only $ \textbf{C}^4$ is used, the prestandardization is not needed and the classical FOBI estimate is obtained as the solution.
The most natural choice for $(\boldsymbol{\Sigma}^*)^{-1/2}$ is then the FOBI functional.
\end{myremark}

The Lagrangian of the objective function in the maximization problem of Definition \ref{def:fobi_func} then has the form
\begin{align*}
 L(\textbf{U}, \boldsymbol{\Lambda}) &= \alpha \sum_{k=1}^p (\textbf{u}_k^T \textbf{C}^3\textbf{u}_k)^2 + (1 - \alpha) \sum_{k=1}^p (\textbf{u}_k^T \textbf{C}^4 \textbf{u}_k)^2 \\
 &- \sum_{k=1}^{p-1} \sum_{l=k+1}^p \lambda_{kl} \textbf{u}_k^T \textbf{u}_l - \sum_{k=1}^p \lambda_{kk} (\textbf{u}_k^T \textbf{u}_k - 1).
\end{align*}
(The matrices $\textbf{C}^3$ and  $\textbf{C}^4$ are evaluated at $\textbf{x}_{st}^*$.)
The optimization can be done as in Section \ref{symm FastIca}, and the estimation equations are
\[
\textbf{U}\textbf{T}^T=\textbf{T}\textbf{U}^T\ \ \mbox{and}\ \ \textbf{U}\textbf{U}^T=\textbf{I}_p
\]
where $\textbf{T}=(\textbf{T}_1,...,\textbf{T}_p)^T$ with
\[\textbf{T}_k = \alpha (\textbf{u}_k^T \textbf{C}^3 \textbf{u}_k) \textbf{C}^3 \textbf{u}_k + (1 - \alpha)(\textbf{u}_k^T \textbf{C}^4 \textbf{u}_k) \textbf{C}^4 \textbf{u}_k,
\ \ k=1,...,p.\]
The estimating equations again suggest a fixed-point algorithm and can be used to find the asymptotic behaviors of the estimates. We then have the following.

\begin{mytheorem}\label{theo:fobi_asymp}
(i) Let $\textbf{z}_1,...,\textbf{z}_n$ be a random sample from a distribution with finite sixth moments and satisfying assumptions \ref{assu:ind} and \ref{assu:distskew}. Assume further that $\sqrt{n}\left((\boldsymbol{\Sigma}^*)^{-1/2} - \textbf{I}_p\right)=O_p(1)$. Then there exists a sequence of solutions based on skewness (that is, $\alpha = 1$) such that $\hat{\textbf{W}}\rightarrow_P \textbf{I}_p$ and
\begin{align*}
\sqrt{n} (\hat{w}_{kk} - 1) &= -\dfrac{1}{2} \sqrt{n} (\hat{s}_{kk} - 1) + o_P(1), \\
\sqrt{n} \hat{w}_{kl} &= \dfrac{\sqrt{n} \hat{\psi}_{1kl}}{\gamma_k - \gamma_l}
 + o_P(1), \quad k \neq l,
\end{align*}
where $\hat{\psi}_{1kl} = \hat{r}_{kl} + \hat{r}_{lk} + \sum_{m \neq k,l} \hat{r}_{mkl} - \gamma_k \hat{s}_{kl}$.
\newline

(ii) Let $\textbf{z}_1,...,\textbf{z}_n$ be a random sample from a distribution with finite eighth moments and satisfying assumptions \ref{assu:ind} and \ref{assu:distkurt}. Assume further that $\sqrt{n}\left((\boldsymbol{\Sigma}^*)^{-1/2} - \textbf{I}_p\right)=O_p(1)$. Then there exists a sequence of solutions based on kurtosis (that is, $\alpha = 0$) such that $\hat{\textbf{W}}\rightarrow_P \textbf{I}_p$ and
\begin{align*}
\sqrt{n} (\hat{w}_{kk} - 1) &= -\dfrac{1}{2} \sqrt{n} (\hat{s}_{kk} - 1) + o_P(1), \\
\sqrt{n} \hat{w}_{kl} &= \dfrac{\sqrt{n} \hat{\psi}_{2kl}}{\kappa_k - \kappa_l}
 + o_P(1), \quad k \neq l,
\end{align*}
where $\hat{\psi}_{2kl} = \hat{q}_{kl} + \hat{q}_{lk} + \sum_{m \neq k,l} \hat{q}_{mkl} - (\kappa_k + p + 4) \hat{s}_{kl}$.
\newline

(iii) Let $\textbf{z}_1,...,\textbf{z}_n$ be a random sample from a distribution with finite eighth moments and satisfying assumptions \ref{assu:ind} and \ref{assu:distboth}. Assume further that $\sqrt{n}\left((\boldsymbol{\Sigma}^*)^{-1/2} - \textbf{I}_p\right)=O_p(1)$. Then there exists a sequence of solutions based on both skewness and kurtosis such that $\hat{\textbf{W}}\rightarrow_P \textbf{I}_p$ and
\begin{align*}
\sqrt{n} (\hat{w}_{kk} - 1) &= -\dfrac{1}{2} \sqrt{n} (\hat{s}_{kk} - 1) + o_P(1), \\
\sqrt{n} \hat{w}_{kl} &= \frac{\alpha(\gamma_k - \gamma_l) \sqrt{n} \hat{\psi}_{1kl} + (1 - \alpha) (\kappa_k - \kappa_l) \sqrt{n} \hat{\psi}_{2kl}}{\alpha (\gamma_k - \gamma_l)^2 + (1 - \alpha)(\kappa_k - \kappa_l)^2} + o_P(1), \quad k \neq l,
\end{align*}
where $\alpha \in [0, 1]$ is the proportion of weight given to skewness, and $\hat{\psi}_{1kl}$ is as in (i) and $\hat{\psi}_{2kl}$ as in (ii).
\end{mytheorem}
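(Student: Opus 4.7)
The plan is to attack all three parts simultaneously by perturbing the estimating equations of Lemma~\ref{lem:estimeq} (adapted to the compound cumulants of Definition \ref{def:fobi_func}) about the limiting solution $\textbf{U} = \textbf{I}_p$, exploiting the affine equivariance of Lemma \ref{lem:fobi_ae} to reduce, without loss of generality, to the case $\boldsymbol{\Omega} = \textbf{I}_p$. Write $\hat{\textbf{U}} = \textbf{I}_p + \textbf{E}$ with $\textbf{E} = O_P(n^{-1/2})$ and $(\hat{\boldsymbol{\Sigma}}^*)^{-1/2} = \textbf{I}_p + \textbf{F}$ with $\textbf{F} = O_P(n^{-1/2})$ (the latter by hypothesis). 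Orthogonality of $\hat{\textbf{U}}$ gives $\textbf{E} + \textbf{E}^T = -\textbf{E}\textbf{E}^T = O_P(n^{-1})$, so the diagonal $E_{kk}$ is $O_P(n^{-1})$ and the off-diagonals are skew up to $o_P(n^{-1/2})$. From the defining relation $(\hat{\boldsymbol{\Sigma}}^*)^{-1/2}\hat{\boldsymbol{\Sigma}}((\hat{\boldsymbol{\Sigma}}^*)^{-1/2})^T = \textbf{I}_p$ I get $\textbf{F} + \textbf{F}^T = -(\hat{\boldsymbol{\Sigma}} - \textbf{I}_p) + O_P(n^{-1})$, which forces $F_{kk} = -\tfrac{1}{2}(\hat{s}_{kk}-1) + O_P(n^{-1})$. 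Since $\hat{\textbf{W}} - \textbf{I}_p = \textbf{E} + \textbf{F} + O_P(n^{-1})$, this instantly delivers the diagonal claim $\sqrt{n}(\hat{w}_{kk}-1) = -\tfrac{1}{2}\sqrt{n}(\hat{s}_{kk}-1) + o_P(1)$ uniformly over (i)--(iii).

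For the off-diagonal entries I expand the empirical compound cumulant matrices at $\textbf{x}_{st}^* = (\textbf{I}_p + \textbf{F})\tilde{\textbf{z}}$. Using the definitions, a direct bookkeeping in the basic statistics of Table~\ref{tab:cov} shows
\[
\hat{\textbf{C}}^3(\textbf{x}_{st}^*)_{kl} = \gamma_k\delta_{kl} + n^{-1/2}\bigl(\sqrt{n}\hat{r}_{kl} + \sqrt{n}\hat{r}_{lk} + \textstyle\sum_{m\neq k,l}\sqrt{n}\hat{r}_{mkl}\bigr) + (\gamma_k F_{kl} + \gamma_l F_{lk}) + o_P(n^{-1/2}),
\]
with the analogous expansion for $\hat{\textbf{C}}^4(\textbf{x}_{st}^*)$ in the $\hat{q}$--statistics together with the second-moment correction $(\kappa_k + p + 4)\hat{s}_{kl}$ that originates from the subtraction of $E(x_i x_j) E(\textbf{xx}^T)$ etc.\ in the definition of $\textbf{C}^{4ij}$. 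Substituting these expansions and the linearization of $\hat{\textbf{U}}$ into the estimating equations $\hat{\textbf{u}}_l^T \hat{\textbf{T}}_k = \hat{\textbf{u}}_k^T \hat{\textbf{T}}_l$ for $k\neq l$, and retaining only the $O_P(n^{-1/2})$ terms, produces, in the skewness case $\alpha=1$, the linear equation
\[
(\gamma_k - \gamma_l)\,\sqrt{n}\bigl(E_{kl} + F_{kl}\bigr) = \sqrt{n}\hat{r}_{kl} + \sqrt{n}\hat{r}_{lk} + \textstyle\sum_{m\neq k,l}\sqrt{n}\hat{r}_{mkl} - \gamma_k \sqrt{n}\hat{s}_{kl} + o_P(1),
\]
which, together with Assumption \ref{assu:distskew}, yields (i); in the kurtosis case $\alpha=0$ the same calculation with $\textbf{C}^4$ gives (ii) using Assumption \ref{assu:distkurt}; and (iii) follows by adding the two linearizations weighted by $\alpha$ and $1-\alpha$, dividing through by the weighted denominator $\alpha(\gamma_k-\gamma_l)^2 + (1-\alpha)(\kappa_k-\kappa_l)^2$, whose non-vanishing is guaranteed by Assumption \ref{assu:distboth}.

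The principal obstacle is the cancellation announced in the remark following Lemma~\ref{lem:fobi_ae}: the off-diagonal entries $F_{kl}$ of the asymmetric pre-standardization depend on the arbitrary auxiliary IC functional, and therefore so do $E_{kl}$ individually, yet their sum $\hat{w}_{kl} = E_{kl} + F_{kl}$ must be free of that choice. The calculation above exhibits this cancellation explicitly: the $(\gamma_k F_{kl} + \gamma_l F_{lk})$ contribution to $\hat{\textbf{C}}^3(\textbf{x}_{st}^*)$, after being fed through the linearization, combines with the corresponding $F$-dependent part of $\textbf{E}$ inherited from the eigenvector perturbation to give precisely $(\gamma_k - \gamma_l)(E_{kl} + F_{kl})$ on the left-hand side, while the right-hand side involves only the empirical third-moment statistics. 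The bookkeeping required to verify this cancellation (and its fourth-cumulant analogue, where the $\beta_k$ and ``$+p+4$'' corrections must be propagated carefully through the expansion) is the main technical labour of the proof; once it is complete, the three stated expansions follow.
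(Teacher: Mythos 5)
Your overall route is the same as the paper's: reduce to $\boldsymbol{\Omega}=\textbf{I}_p$ by equivariance, linearize the first-order conditions of the joint-diagonalization problem around $\hat{\textbf{U}}=\textbf{I}_p$, use the defining relation of the asymmetric pre-standardization to get the diagonal expansion, and show that the arbitrary choice of standardizing IC functional cancels from the off-diagonal entries. Two points, however, need attention.

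First, there is a genuine gap at the outset: you \emph{assume} $\hat{\textbf{U}}=\textbf{I}_p+\textbf{E}$ with $\textbf{E}=O_P(n^{-1/2})$, but the theorem's claim includes the existence of a sequence of solutions with $\hat{\textbf{W}}\rightarrow_P\textbf{I}_p$, and the root-$n$ rate of the maximizer is not free. The paper spends the first part of its proof establishing $\sup_{\textbf{U}\in\mathcal{U}}|D(\textbf{U})-D_n(\textbf{U})|\rightarrow_P 0$ (by factoring $(\textbf{u}^T\textbf{M}\textbf{u})^2-(\textbf{u}^T\hat{\textbf{M}}\textbf{u})^2$ and using compactness of $\mathcal{U}$) and then invokes an argument from the cited literature to get consistency; only then is the linearization legitimate. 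Without this step your argument is circular.

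Second, your displayed expansion of $\hat{\textbf{C}}^3(\textbf{x}_{st}^*)_{kl}$ carries the standardization contribution as $\gamma_k F_{kl}+\gamma_l F_{lk}$, whereas the correct term (computing $[(\hat{\textbf{V}}-\textbf{I}_p)\sum_m\gamma_m\textbf{E}^{mm}]_{kl}+[\sum_m\gamma_m\textbf{E}^{mm}(\hat{\textbf{V}}^T-\textbf{I}_p)]_{kl}$) is $\gamma_l F_{kl}+\gamma_k F_{lk}$. This is not cosmetic: the cancellation you rightly identify as the crux works only with the correct placement, since $\gamma_l F_{kl}+\gamma_k F_{lk}+(\gamma_k-\gamma_l)F_{kl}=\gamma_k(F_{kl}+F_{lk})=-\gamma_k\hat{s}_{kl}+o_P(n^{-1/2})$, which is exactly where the $-\gamma_k\hat{s}_{kl}$ term of $\hat{\psi}_{1kl}$ comes from and why the arbitrary IC pre-standardization drops out. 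With your indices the $F$-dependence does not reduce to a multiple of $F_{kl}+F_{lk}$ and does not cancel. Your final linear equation is nevertheless the correct one, so this reads as a slip rather than a wrong idea, but the verification you defer to ``bookkeeping'' is precisely the step where the slip occurs, and the analogous fourth-cumulant computation (where the $(\kappa_k+p+4)$ coefficient arises from the subtracted products in $\textbf{C}^{4ij}$ and the $\sum_{m\neq k,l}\hat{q}_{mkl}$ terms) is asserted rather than carried out.
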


\begin{mycorollary} \label{cor:fobi_asymp}
(i) Under the assumptions of Theorem \ref{theo:fobi_asymp}(i) the limiting distribution of $\sqrt{n} \, vec(\hat{\textbf{W}} - \textbf{I}_p)$ is multivariate normal with mean vector $\textbf{0}$ and the following asymptotic variances.
\begin{alignat*}{3}
&ASV&(\hat{w}_{kk}) &= \frac{\kappa_k + 2}{4}, &\\
&ASV&(\hat{w}_{kl}) &= \frac{\zeta_{11}}{(\gamma_k - \gamma_l)^2}, &\quad k \neq l,
\end{alignat*}
where $\zeta_{11} = (\nu_k - \gamma_k^2) + (\nu_l - \gamma_l^2) + \gamma_l^2 + (p-2)$.

(ii) Under the assumptions of Theorem \ref{theo:fobi_asymp}(ii) the limiting distribution of $\sqrt{n} \, vec(\hat{\textbf{W}} - \textbf{I}_p)$ is multivariate normal with mean vector $\textbf{0}$ and the following asymptotic variances.
\begin{alignat*}{3}
&ASV&(\hat{w}_{kk}) &= \frac{\kappa_k + 2}{4}, &\\
&ASV&(\hat{w}_{kl}) &= \frac{\zeta_{22}}{(\kappa_k - \kappa_l)^2}, &\quad k \neq l,
\end{alignat*}
where $\zeta_{22} = (\omega_k - \beta_k^2) + (\omega_l - \beta_l^2) + \kappa_l^2 + \sum_{m \neq k,l} (\beta_m - 1)$.

(iii) Under the assumptions of Theorem \ref{theo:fobi_asymp}(iii) the limiting distribution of $\sqrt{n} \, vec(\hat{\textbf{W}} - \textbf{I}_p)$ is multivariate normal with mean vector $\textbf{0}$ and the following asymptotic variances.
\begin{alignat*}{3}
&ASV&(\hat{w}_{kk}) &= \frac{\kappa_k + 2}{4}, &\\
&ASV&(\hat{w}_{kl}) &= \frac{\alpha^2\delta_{1kl}^2 \zeta_{11} + (1 - \alpha)^2\delta_{2kl}^2 \zeta_{22} + 2 \alpha (1 - \alpha) \delta_{1kl} \delta_{2kl} \zeta_{12}}{(\alpha \delta_{1kl}^2 + (1 - \alpha) \delta_{2kl}^2)^2}, &\quad k \neq l,
\end{alignat*}
where $\delta_{1kl} = (\gamma_k - \gamma_l)$, $\delta_{2kl} = (\kappa_k - \kappa_l)$, $\alpha \in [0, 1]$ is the proportion of weight given to skewness, and $\zeta_{11}$ is as in (i), $\zeta_{22}$ as in (ii) and $\zeta_{12} = (\eta_k - \gamma_k \beta_k) + (\eta_l - \gamma_l \beta_l) + \gamma_l \kappa_l + \sum_{m \neq k,l} \gamma_m$.
\end{mycorollary}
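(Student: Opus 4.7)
The plan is to derive each asymptotic variance directly from the linear expansions supplied by Theorem \ref{theo:fobi_asymp}. Every coefficient in those expansions is a constant depending only on the marginal moments of $\textbf{z}$, so $\sqrt{n}\, vec(\hat{\textbf{W}} - \textbf{I}_p)$ is the image under a fixed linear map of the joint limit of the statistics $\sqrt{n}\hat{s}_{kl}, \sqrt{n}\hat{r}_{kl}, \sqrt{n}\hat{r}_{mkl}, \sqrt{n}\hat{q}_{kl}, \sqrt{n}\hat{q}_{mkl}$. Joint multinormality of this family follows from the classical CLT (as noted just after Table \ref{tab:cov}), and combining this with the $o_P(1)$ remainders in Theorem \ref{theo:fobi_asymp} via Slutsky yields the asymptotic normality of $\sqrt{n}\, vec(\hat{\textbf{W}} - \textbf{I}_p)$ with mean zero. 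Only the elementwise variances remain to be identified.

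For the diagonal entries, the linearization $\sqrt{n}(\hat{w}_{kk} - 1) = -\tfrac{1}{2}\sqrt{n}(\hat{s}_{kk} - 1) + o_P(1)$ together with $Var(z_{ik}^2) = \beta_k - 1 = \kappa_k + 2$ gives the stated formula at once. For the off-diagonal entries under part (i), I would substitute $\hat{\psi}_{1kl} = \hat{r}_{kl} + \hat{r}_{lk} + \sum_{m\neq k,l}\hat{r}_{mkl} - \gamma_k \hat{s}_{kl}$ and expand $Var(\sqrt{n}\hat{\psi}_{1kl})$. Because $E(z_{im})=0$, each $\hat{r}_{mkl}$ is uncorrelated with $\hat{r}_{kl}, \hat{r}_{lk}, \hat{s}_{kl}$ and with every other $\hat{r}_{m'kl}$; only the entries $\nu_k, \nu_l, \gamma_k\gamma_l, \gamma_k, \gamma_l$ from Table \ref{tab:cov} contribute, and a short cancellation gives $\nu_k+\nu_l+(p-2)-\gamma_k^2 = \zeta_{11}$. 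Part (ii) follows the same recipe for the richer $\hat{q}$-family, keeping track of $Var(\hat{q}_{mkl})=\beta_m$, $Cov(\hat{q}_{kl},\hat{q}_{mkl})=\beta_k$, $Cov(\hat{q}_{mkl},\hat{s}_{kl})=1$, and $Cov(\hat{q}_{mkl},\hat{q}_{m'kl})=1$ for distinct $m,m'$ (this last one not in Table \ref{tab:cov} but immediate from independence). The coefficient $\kappa_k+p+4$ attached to $\hat{s}_{kl}$ is precisely what is required for the numerous cross-terms to collapse into the compact $\zeta_{22}$.

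Part (iii) is then a mechanical combination: the numerator is a weighted sum of $\hat{\psi}_{1kl}$ and $\hat{\psi}_{2kl}$, so its variance is the corresponding quadratic form in $\zeta_{11}, \zeta_{22}$ and the cross-covariance $Cov(\sqrt{n}\hat{\psi}_{1kl}, \sqrt{n}\hat{\psi}_{2kl})$. The latter I would evaluate using the mixed $r$-$q$ entries of Table \ref{tab:cov}, chiefly $Cov(\hat{r}_{kl}, \hat{q}_{kl}) = \eta_k$, $Cov(\hat{r}_{kl}, \hat{q}_{lk}) = \beta_l\gamma_k$, $Cov(\hat{r}_{kl}, \hat{s}_{kl})=\gamma_k$, $Cov(\hat{q}_{kl}, \hat{s}_{kl})=\beta_k$, their symmetric counterparts, and the fact that $\hat{r}_{mkl}$ is uncorrelated with every $\hat{q}$-term; the cross sums again collapse into $\zeta_{12}$. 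Nothing here is conceptually subtle; the only real obstacle is the bookkeeping needed to shepherd the many indices and cancellations to completion. Special care is required to keep $k$ and $l$ asymmetric---this is what leaves the $\gamma_l^2$, $\kappa_l^2$, and $\gamma_l^2\kappa_l^2$ residuals in $\zeta_{11}, \zeta_{22}, \zeta_{12}$ and reflects the fact that the standardizing factors multiplying $\hat{s}_{kl}$ depend only on $k$.
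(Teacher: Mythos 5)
Your derivation is correct and follows the paper's (implicit) route exactly: the corollary is obtained from the linear expansions of Theorem \ref{theo:fobi_asymp} by computing the variances and covariances of the $\hat{\psi}$-statistics from Table \ref{tab:cov}, supplemented by the two independence facts you rightly point out are not in the table ($Cov(\hat{q}_{mkl},\hat{q}_{m'kl})=1$ and the vanishing of the $\hat{r}_{mkl}$ cross-terms), and your stated cancellations check out. The only blemish is in your closing aside: for this corollary the asymmetric residual in $\zeta_{12}$ is $\gamma_l\kappa_l$, not $\gamma_l^2\kappa_l^2$ (the latter belongs to $\zeta_{12}$ of Corollary \ref{cor:jade_asymp}).
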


By noting in Corollary \ref{cor:fobi_asymp}(ii) that $(\beta_m - 1) > 0, \, m=1,...,p$ we further get a lower bound for the corresponding asymptotic variance.

\begin{mycorollary}
The asymptotic variance of the classical FOBI in \ref{cor:fobi_asymp}(ii) has a lower bound of
\[\frac{(\omega_k - \beta_k^2) + (\omega_l - \beta_l^2) + \kappa_l^2}{(\kappa_k - \kappa_l)^2}. \]
\end{mycorollary}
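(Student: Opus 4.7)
The plan is to observe that the claimed lower bound is exactly the expression from Corollary \ref{cor:fobi_asymp}(ii) with the summation term $\sum_{m \neq k,l}(\beta_m - 1)$ removed from the numerator, so it suffices to show that each summand is non-negative.

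First I would fix an index $m$ and apply Jensen's inequality (equivalently, Cauchy--Schwarz) to the convex function $t\mapsto t^2$ to get
\[
\beta_m \;=\; E(z_{im}^4) \;=\; E\bigl((z_{im}^2)^2\bigr) \;\geq\; \bigl(E(z_{im}^2)\bigr)^2 \;=\; 1,
\]
where the last equality uses the standardization assumption $E(z_{im}^2)=1$ built into Assumption \ref{assu:ind}. Hence $\beta_m - 1 \geq 0$ for every $m$, and so $\sum_{m \neq k,l}(\beta_m - 1) \geq 0$.

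Next I would plug this back into the formula for $\zeta_{22}$ in Corollary \ref{cor:fobi_asymp}(ii), namely
\[
\zeta_{22} \;=\; (\omega_k - \beta_k^2) + (\omega_l - \beta_l^2) + \kappa_l^2 + \sum_{m \neq k,l}(\beta_m - 1),
\]
and conclude $\zeta_{22} \geq (\omega_k - \beta_k^2) + (\omega_l - \beta_l^2) + \kappa_l^2$. Dividing by the positive denominator $(\kappa_k - \kappa_l)^2$, which is strictly positive under Assumption \ref{assu:distkurt}, yields the stated bound.

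There is no real obstacle here; the only thing worth flagging is that the bound is strict as soon as there exists an index $m\notin\{k,l\}$ for which $z_{im}$ is not a symmetric two-point $\pm 1$ variable (the equality case in Jensen), so in generic IC models the gap between FOBI's variance and the lower bound grows with $p$. This observation is what motivates viewing the result as a genuine limitation of classical FOBI rather than merely a technical inequality.
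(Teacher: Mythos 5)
Your proposal is correct and is essentially the paper's own argument: the paper simply notes that $(\beta_m - 1) > 0$ and drops the sum $\sum_{m \neq k,l}(\beta_m - 1)$ from $\zeta_{22}$ to obtain the bound. Your Jensen-inequality justification of $\beta_m \geq 1$ and the identification of the equality case (symmetric $\pm 1$ components) is in fact slightly more careful than the paper's bare assertion of strict positivity, which fails for Rademacher marginals.
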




\subsection{Using all cumulant matrices}\label{subsec:jade}

Besides the issue of achieving affine equivariance, another clear drawback of using the compound cumulant matrices $\textbf{C}^3$ and $\textbf{C}^4$ in the estimation of the unmixing matrix is that these matrices combine only certain  subsets of all $p^3$ or $p^4$ possible joint third or fourth cumulants. As such, the compound cumulant method may not use all the information available in joint cumulants.

A standard solution used in the literature (for fourth cumulants) is to, instead of using only the matrix $\textbf{C}^4$, use all the cumulant matrices $\textbf{C}^{\,4ij}$ simultaneously. This approach, called JADE, was introduced in \cite{cardoso1993blind}. For further details and variants, see also \cite{BonhommeRobin2009,MiettinenNordhausenOjaTaskinen:2013}.
We give the following.

\begin{mydefinition}\label{def:jade_func}
The functional  based on all third and fourth cumulants is a functional  $\textbf{W} (F_\textbf{x}) = \textbf{U}\boldsymbol{\Sigma}^{-1/2}$, where $\boldsymbol{\Sigma} = Cov(\textbf{x})$ and
\[ \textbf{U} = \underset{\textbf{U} \in \mathcal{U}}{\emph{argmax}} \left( \alpha \sum_{i=1}^p \| diag(\textbf{U} \textbf{C}^{\,3i} \textbf{U}^T)  \|^2 + (1 - \alpha) \sum_{i=1}^p \sum_{j=1}^p \| diag(\textbf{U} \textbf{C}^{\,4ij} \textbf{U}^T)  \|^2 \right), \]
where $\textbf{C}^{\,3i}$ and $\textbf{C}^{\,4ij}$ are evaluated at $\textbf{x}_{st}$ and $\alpha \in [0, 1]$ is the proportion of weight given to skewness.
\end{mydefinition}
For $\alpha=0$, the classical JADE estimate is obtained. \citet{moreau2001generalization} has a similar definition for his \textit{eJADE} estimate, the matrices $\textbf{U} \textbf{C}^{\,3i}(\textbf{x}_{st}) \textbf{U}^T$ replaced by $\textbf{C}^{3i}(\textbf{U}\textbf{x}_{st})$.
\cite{miettinen2014fourth} proved that the classical JADE functional ($\alpha=0$) is affine equivariant. This is true for the combined functional $\textbf{W} (F_\textbf{x})$ as well and we have the following.

\begin{mylemma}\label{lem:jade_ae}
The functional $\textbf{W} (F_\textbf{x})$ in Definition \ref{def:jade_func} is an independent component functional  for all $\alpha \in [0, 1]$.
\end{mylemma}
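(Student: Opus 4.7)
The plan is to verify the two defining properties of an independent component functional separately: Fisher consistency of $\textbf{W}(F_\textbf{x})\textbf{x}$ with $\textbf{z}$ (up to sign and permutation) and the required affine equivariance. Since affine equivariance will also reduce the Fisher consistency question to the canonical case $\boldsymbol{\Omega}=\textbf{I}_p$, $\boldsymbol{\mu}=\textbf{0}$, I would prove affine equivariance first.

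For affine equivariance, let $\textbf{x}^*=\textbf{A}\textbf{x}+\textbf{b}$. The standard fact recalled in Section~\ref{sec:nota} gives $\textbf{x}_{st}^*=\textbf{V}\textbf{x}_{st}$ for the orthogonal matrix $\textbf{V}=(\boldsymbol{\Sigma}^*)^{-1/2}\textbf{A}\boldsymbol{\Sigma}^{1/2}$. Expanding the definitions of $\textbf{C}^{3i}$ and $\textbf{C}^{4ij}$ (carefully handling the three centering corrections in the latter) yields the multilinear transformation rules
\[
\textbf{C}^{3i}(\textbf{x}_{st}^*)=\sum_{m}V_{im}\,\textbf{V}\,\textbf{C}^{3m}(\textbf{x}_{st})\,\textbf{V}^T,\qquad \textbf{C}^{4ij}(\textbf{x}_{st}^*)=\sum_{m,n}V_{im}V_{jn}\,\textbf{V}\,\textbf{C}^{4mn}(\textbf{x}_{st})\,\textbf{V}^T.
\]
Plugging these into the objective and exploiting both the Frobenius invariance of outer conjugation by $\textbf{V}$ and the orthogonality of $\textbf{V}$ (resp.\ $\textbf{V}\otimes\textbf{V}$) in the index-mixing collapses the whole sum to $J_{F_{\textbf{x}^*}}(\textbf{U})=J_{F_\textbf{x}}(\textbf{U}\textbf{V})$. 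Hence every maximizer $\textbf{U}^*$ for $F_{\textbf{x}^*}$ has the form $\textbf{U}^*=\textbf{P}\textbf{J}\textbf{U}\textbf{V}^T$ with $\textbf{U}$ a maximizer for $F_\textbf{x}$, and a short computation using $\textbf{A}^T(\boldsymbol{\Sigma}^*)^{-1}\textbf{A}=\boldsymbol{\Sigma}^{-1}$ gives $\textbf{W}(F_{\textbf{x}^*})\textbf{A}\textbf{x}=\textbf{P}\textbf{J}\textbf{U}\boldsymbol{\Sigma}^{-1/2}\textbf{x}=\textbf{P}\textbf{J}\textbf{W}(F_\textbf{x})\textbf{x}$.

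For Fisher consistency I would specialize to $\boldsymbol{\Omega}=\textbf{I}_p$, $\boldsymbol{\mu}=\textbf{0}$, so that $\textbf{x}_{st}=\textbf{z}$. By Theorem~\ref{theo:diag} every $\textbf{C}^{3i}(\textbf{z})=\gamma_i\textbf{E}^{ii}$ and $\textbf{C}^{4ij}(\textbf{z})=\delta_{ij}\kappa_i\textbf{E}^{ii}$ is already diagonal in the standard basis. Applying the orthogonal-invariance identity stated at the end of Section~\ref{sec:multi} separately to the two families gives an absolute upper bound on the objective, attained iff $\textbf{U}$ simultaneously diagonalizes every weighted matrix. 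Since $\textbf{U}\textbf{E}^{ii}\textbf{U}^T$ is diagonal precisely when the $i$th column of $\textbf{U}$ is a signed standard basis vector, the appropriate strengthening of Assumption~\ref{assu:gaus} (namely Assumption~\ref{assu:zeroskew}, \ref{assu:zerokurt} or \ref{assu:zeroboth} for $\alpha=1$, $\alpha=0$ or $\alpha\in(0,1)$) forces every maximizer into the form $\textbf{U}=\textbf{P}\textbf{J}$ with $\textbf{P}\in\mathcal{P}$ and $\textbf{J}\in\mathcal{J}$, whence $\textbf{U}\textbf{z}$ has independent components.

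The main obstacle will be the fourth-cumulant transformation rule: the three centering corrections in the definition of $\textbf{C}^{4ij}$ each have to mix bilinearly in $(i,j)$ in exactly compatible ways for them to assemble into the single clean bilinear conjugation formula with coefficients $V_{im}V_{jn}$. Once this algebraic identity is in hand, the rest of the bookkeeping parallels the affine equivariance argument given for classical JADE in \cite{miettinen2014fourth}.
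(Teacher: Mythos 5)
Your proposal is correct and follows essentially the same route as the paper: the core step in both is the transformation rule $\textbf{C}^{3i}(\textbf{V}\textbf{x}_{st})=\sum_k v_{ik}\,\textbf{V}\textbf{C}^{3k}(\textbf{x}_{st})\textbf{V}^T$ followed by the collapse of the index sums via orthogonality of $\textbf{V}$ (and of $\textbf{V}\otimes\textbf{V}$ for the fourth-order part, which the paper instead imports from \cite{miettinen2014fourth}), yielding $D(\textbf{U},\textbf{V}\textbf{x}_{st})=D(\textbf{U}\textbf{V},\textbf{x}_{st})$. Your additional explicit treatment of Fisher consistency via Theorem \ref{theo:diag} and the diag/off decomposition is sound and merely spells out what the paper leaves to the discussion surrounding that theorem.
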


The Lagrangian of the maximization problem in Definition \ref{def:jade_func} has the form
\begin{align*}
L(\textbf{U}, \boldsymbol{\Lambda}) &= \alpha \sum_{i=1}^p \sum_{k=1}^p (\textbf{u}_k^T \textbf{C}^{3i} \textbf{u}_k )^2 + (1 - \alpha) \sum_{i=1}^p \sum_{j=1}^p \sum_{k=1}^p (\textbf{u}_k^T \textbf{C}^{4ij} \textbf{u}_k)^2 \\
&- \sum_{k=1}^{p-1} \sum_{l=k+1}^p \lambda_{kl} \textbf{u}_k^T \textbf{u}_l - \sum_{k=1}^p \lambda_{kk} (\textbf{u}_k^T \textbf{u}_k - 1).
\end{align*}
Optimization provides the estimating equations
\[
\textbf{U}\textbf{T}^T=\textbf{T}\textbf{U}^T\ \ \mbox{and}\ \ \textbf{U}\textbf{U}^T=\textbf{I}_p
\]
where $\textbf{T}=(\textbf{T}_1,...,\textbf{T}_p)^T$ now with
\[\textbf{T}_k = \alpha \sum_{i=1}^p (\textbf{u}_k^T \textbf{C}^{3i} \textbf{u}_k) \textbf{C}^{3i} \textbf{u}_k + (1 - \alpha) \sum_{i=1}^p \sum_{j=1}^p (\textbf{u}_k^T \textbf{C}^{4ij} \textbf{u}_k) \textbf{C}^{4ij} \textbf{u}_k.\]
As in previous sections we then obtain the following.
\begin{mytheorem}\label{theo:jade_asymp}
(i) Let $\textbf{z}_1,...,\textbf{z}_n$ be a random sample from a distribution with finite sixth moments and satisfying assumptions \ref{assu:ind} and \ref{assu:zeroskew}. Then there exists a sequence of solutions based on skewness (that is, $\alpha = 1$) such that $\hat{\textbf{W}}\rightarrow_P \textbf{I}_p$ and
\begin{align*}
\sqrt{n} (\hat{w}_{kk} - 1) &= -\dfrac{1}{2} \sqrt{n} (\hat{s}_{kk} - 1) + o_P(1), \\
\sqrt{n} \hat{w}_{kl} &= \frac{\sqrt{n} \hat{\psi}_{1kl}}{\gamma_k^2 + \gamma_l^2} + o_P(1), \quad l \neq k,
\end{align*}
where $\hat{\psi}_{1kl} = \gamma_k \hat{r}_{kl} - \gamma_l \hat{r}_{lk} - \gamma_k^2 \hat{s}_{kl}$.
\newline

(ii) Let $\textbf{z}_1,...,\textbf{z}_n$ be a random sample from a distribution with finite eighth moments and satisfying assumptions \ref{assu:ind} and \ref{assu:zerokurt}. Then there exists a sequence of solutions based on kurtosis (that is, $\alpha = 0$) such that $\hat{\textbf{W}}\rightarrow_P \textbf{I}_p$ and
\begin{align*}
\sqrt{n} (\hat{w}_{kk} - 1) &= -\dfrac{1}{2} \sqrt{n} (\hat{s}_{kk} - 1) + o_P(1), \\
\sqrt{n} \hat{w}_{kl} &= \frac{\sqrt{n} \hat{\psi}_{2kl}}{\kappa_k^2 + \kappa_l^2} + o_P(1), \quad l \neq k,
\end{align*}
where $\hat{\psi}_{2kl} = \kappa_k \hat{q}_{kl} - \kappa_l \hat{q}_{lk} - (\kappa_k \beta_k - 3\kappa_l) \hat{s}_{kl}$.
\newline

(iii) Let $\textbf{z}_1,...,\textbf{z}_n$ be a random sample from a distribution with finite eighth moments and satisfying assumptions \ref{assu:ind} and \ref{assu:zeroboth}. Then there exists a sequence of solutions based on both skewness and kurtosis such that $\hat{\textbf{W}}\rightarrow_P \textbf{I}_p$ and
\begin{align*}
\sqrt{n} (\hat{w}_{kk} - 1) &= -\dfrac{1}{2} \sqrt{n} (\hat{s}_{kk} - 1) + o_P(1), \\
\sqrt{n} \hat{w}_{kl} &= \frac{\alpha \sqrt{n} \hat{\psi}_{1kl} + (1 - \alpha) \sqrt{n} \hat{\psi}_{2kl}}{\alpha (\gamma_k^2 + \gamma_l^2) + (1 - \alpha) (\kappa_k^2 + \kappa_l^2)} + o_P(1), \quad l \neq k,
\end{align*}
where $\alpha \in [0, 1]$ is the proportion of weight given to skewness, and $\hat{\psi}_{1kl}$ is as in (i) and $\hat{\psi}_{2kl}$ as in (ii).
\end{mytheorem}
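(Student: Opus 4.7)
By Lemma \ref{lem:jade_ae} the estimator is affine equivariant, so I restrict to $\boldsymbol{\Omega} = \textbf{I}_p$ throughout; then $\textbf{x}_i = \textbf{z}_i$ and the target is $\textbf{I}_p$. Consistency $\hat{\textbf{W}}\to_P \textbf{I}_p$ follows from Theorem \ref{theo:diag} (which exhibits $\textbf{I}_p$ as a population maximizer) together with the relevant non-degeneracy assumption (\ref{assu:zeroskew}, \ref{assu:zerokurt}, or \ref{assu:zeroboth}), which rules out ties among the non-trivial diagonals of the limiting cumulant matrices and therefore leaves only sign/permutation indeterminacy. Combined with uniform convergence of the sample objective over the compact set $\mathcal{U}$ this gives the standard argmax-continuity conclusion. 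I then factor $\hat{\textbf{W}} = \hat{\textbf{U}}\hat{\boldsymbol{\Sigma}}^{-1/2}$, $\hat{\textbf{U}} \in \mathcal{U}$, and expand $\hat{\boldsymbol{\Sigma}}^{-1/2} = \textbf{I}_p - \tfrac12(\hat{\boldsymbol{\Sigma}} - \textbf{I}_p) + o_P(n^{-1/2})$, whose $(k,l)$ entry contributes exactly $-\tfrac12(\hat{s}_{kl} - \delta_{kl}) + o_P(n^{-1/2})$ to $\hat{\textbf{W}}$.

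Write $\hat{\textbf{U}} = \textbf{I}_p + n^{-1/2}\hat{\textbf{H}} + o_P(n^{-1/2})$. Orthogonality of $\hat{\textbf{U}}$ forces $\hat{\textbf{H}} + \hat{\textbf{H}}^T = o_P(1)$, so $\hat{h}_{kk} = o_P(1)$ and $\hat{h}_{lk} = -\hat{h}_{kl} + o_P(1)$; multiplying with the standardization expansion yields $\sqrt{n}(\hat{w}_{kk} - 1) = -\tfrac12\sqrt{n}(\hat{s}_{kk} - 1) + o_P(1)$ and $\sqrt{n}\hat{w}_{kl} = \hat{h}_{kl} - \tfrac12\sqrt{n}\hat{s}_{kl} + o_P(1)$ for $k \neq l$. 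This takes care of the diagonal assertion in each part and reduces the theorem to identifying $\hat{h}_{kl}$, $k \ne l$, from the estimating equation.

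The core step is linearizing $\hat{\textbf{U}}\hat{\textbf{T}}^T = \hat{\textbf{T}}\hat{\textbf{U}}^T$ around $\hat{\textbf{U}} = \textbf{I}_p$. Each sample cumulant matrix admits an expansion $\hat{\textbf{C}}^{3i}(\textbf{x}_{st}) = \gamma_i \textbf{E}^{ii} + n^{-1/2}\boldsymbol{\Delta}^{3i} + o_P(n^{-1/2})$ and $\hat{\textbf{C}}^{4ij}(\textbf{x}_{st}) = \delta_{ij}\kappa_i \textbf{E}^{ii} + n^{-1/2}\boldsymbol{\Delta}^{4ij} + o_P(n^{-1/2})$, the entries of the perturbation matrices being linear combinations of the statistics $\hat{s}, \hat{r}, \hat{r}_{m\cdot\cdot}, \hat{q}, \hat{q}_{m\cdot\cdot}$ of Section \ref{sec:nota} (together with contributions from the prestandardization that reduce to $\hat{s}_{kl}$). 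The decisive observation separating JADE from the compound-matrix approach of Theorem \ref{theo:fobi_asymp} is that each summand of $\hat{\textbf{T}}_k$ is a product of a scalar $\hat{\textbf{u}}_k^T \hat{\textbf{C}} \hat{\textbf{u}}_k$ and a vector $\hat{\textbf{C}}\hat{\textbf{u}}_k$: for every $i \neq k$ in the third-cumulant sum, and for every $(i,j) \neq (k,k)$ in the fourth-cumulant sum, both factors are already $O_P(n^{-1/2})$, so the summand is $O_P(n^{-1})$ and is discarded at first order. Only the two summands with $i = k$ and $(i,j) = (k,k)$ survive, which is exactly why the three-index statistics $\hat{r}_{m\cdot\cdot}, \hat{q}_{m\cdot\cdot}$ never enter the final expansion.

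Taking the $(k,l)$ entry of the skew-symmetric identity $\hat{\textbf{T}}\hat{\textbf{U}}^T - \hat{\textbf{U}}\hat{\textbf{T}}^T = 0$, substituting $\hat{h}_{lk} = -\hat{h}_{kl} + o_P(1)$, and plugging in the surviving perturbations $\boldsymbol{\Delta}^{3k}, \boldsymbol{\Delta}^{3l}, \boldsymbol{\Delta}^{4kk}, \boldsymbol{\Delta}^{4ll}$ produces a scalar equation of the form $[\alpha(\gamma_k^2+\gamma_l^2) + (1-\alpha)(\kappa_k^2+\kappa_l^2)]\hat{h}_{kl} = \alpha L_{1kl} + (1-\alpha)L_{2kl} + o_P(1)$, with $L_{1kl}, L_{2kl}$ linear in $\sqrt{n}\hat{r}_{kl}, \sqrt{n}\hat{r}_{lk}, \sqrt{n}\hat{q}_{kl}, \sqrt{n}\hat{q}_{lk}, \sqrt{n}\hat{s}_{kl}$. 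Solving for $\hat{h}_{kl}$ and merging with $\sqrt{n}\hat{w}_{kl} = \hat{h}_{kl} - \tfrac12\sqrt{n}\hat{s}_{kl} + o_P(1)$ absorbs the $\hat{s}_{kl}$ contributions into the definitions of $\hat{\psi}_{1kl}$ and $\hat{\psi}_{2kl}$ given in parts (i) and (ii), yielding the stated formula. The principal technical obstacle is the explicit perturbation calculus for $\boldsymbol{\Delta}^{3k}$ and $\boldsymbol{\Delta}^{4kk}$ (using Table \ref{tab:cov} to control the joint limits of the building-block statistics) together with the rigorous bookkeeping that every cross-index summand is $o_P(n^{-1/2})$; once this is in place the resulting scalar equation is essentially the one arising in the proof of Theorem \ref{theo:sym_asymp}, which accounts for the identical form of the two expansions at $\alpha \in \{0,1\}$.
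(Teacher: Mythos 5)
Your proposal follows essentially the same route as the paper: restriction to $\boldsymbol{\Omega}=\textbf{I}_p$ by equivariance, consistency via uniform convergence of the objective over $\mathcal{U}$, the decomposition $\hat{\textbf{W}}=\hat{\textbf{U}}\hat{\boldsymbol{\Sigma}}^{-1/2}$ handling the diagonal terms (this is Lemma \ref{lem:app_diag}), and a linearization of the joint-diagonalization criterion around the identity in which only the four matrices $\textbf{C}^{3k},\textbf{C}^{3l},\textbf{C}^{4kk},\textbf{C}^{4ll}$ contribute at order $n^{-1/2}$ — your "surviving summands" observation is exactly the content of the paper's appeal to Lemma \ref{lem:app_multi}, which you re-derive from the estimating equations instead of citing. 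The only substantive piece you defer rather than execute is the explicit first-order expansion of $\hat{\textbf{C}}{}^{3k}_{kl}$ and $\hat{\textbf{C}}{}^{4kk}_{kl}$ in terms of $\hat{r}_{kl},\hat{r}_{lk},\hat{q}_{kl},\hat{q}_{lk},\hat{s}_{kl}$ (including the cancellation of the standardization effect via $\sqrt{n}\hat{v}_{kl}+\sqrt{n}\hat{v}_{lk}=-\sqrt{n}\hat{s}_{kl}+o_P(1)$), which is precisely where the stated forms of $\hat{\psi}_{1kl}$ and $\hat{\psi}_{2kl}$ come from and which the paper carries out via the $\hat{\textbf{H}}_k$ and $\hat{N}_{klm}$ expansions borrowed from the proof of Theorem \ref{theo:fobi_asymp}.
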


\begin{mycorollary} \label{cor:jade_asymp}
(i) Under the assumptions of Theorem \ref{theo:jade_asymp}(i) the limiting distribution of $\sqrt{n} \, vec(\hat{\textbf{W}} - \textbf{I}_p)$ is multivariate normal with mean vector $\textbf{0}$ and the following asymptotic variances.
\begin{alignat*}{3}
&ASV&(\hat{w}_{kk}) &= \frac{\kappa_k + 2}{4}, &\\
&ASV&(\hat{w}_{kl}) &= \frac{\zeta_{11}}{(\gamma_k^2 + \gamma_l ^2)^2}, &\quad k \neq l,
\end{alignat*}
where $\zeta_{11} = \gamma_k^2 (\nu_k - \gamma_k^2) + \gamma_l^2 (\nu_l - \gamma_l^2) + \gamma_l^4$.
\newline

(ii) Under the assumptions of Theorem \ref{theo:jade_asymp}(ii) the limiting distribution of $\sqrt{n} \, vec(\hat{\textbf{W}} - \textbf{I}_p)$ is multivariate normal with mean vector $\textbf{0}$ and the following asymptotic variances.
\begin{alignat*}{3}
&ASV&(\hat{w}_{kk}) &= \frac{\kappa_k + 2}{4}, &\\
&ASV&(\hat{w}_{kl}) &= \frac{\zeta_{22}}{(\kappa_k^2 + \kappa_l ^2)^2}, &\quad k \neq l,
\end{alignat*}
where $\zeta_{22} = \kappa_k^2 (\omega_k - \beta_k^2) + \kappa_l^2 (\omega_l - \beta_l^2) + \kappa_l^4$.
\newline

(iii) Under the assumptions of Theorem \ref{theo:jade_asymp}(iii) the limiting distribution of $\sqrt{n} \, vec(\hat{\textbf{W}} - \textbf{I}_p)$ is multivariate normal with mean vector $\textbf{0}$ and the following asymptotic variances.
\begin{alignat*}{3}
&ASV&(\hat{w}_{kk}) &= \frac{\kappa_k + 2}{4}, &\\
&ASV&(\hat{w}_{kl}) &= \frac{\alpha^2 \zeta_{11} + (1 - \alpha)^2 \zeta_{22} + 2 \alpha (1 - \alpha) \zeta_{12}}{(\alpha (\gamma_k^2 + \gamma_l^2) + (1 - \alpha) (\kappa_k^2 + \kappa_l^2))^2}, &\quad k \neq l,
\end{alignat*}
where $\alpha \in [0, 1]$ is the proportion of weight given to skewness, and $\zeta_{11}$ is as in (i), $\zeta_{22}$ as in (ii) and $\zeta_{12} = \gamma_k \kappa_k (\eta_k - \gamma_k \beta_k) + \gamma_l \kappa_l (\eta_l - \gamma_l \beta_l) + \gamma_l^2 \kappa_l^2$.
\end{mycorollary}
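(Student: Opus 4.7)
The plan is to derive the corollary directly from the linearizations in Theorem \ref{theo:jade_asymp} combined with the asymptotic covariance structure summarized in Table \ref{tab:cov}. Theorem \ref{theo:jade_asymp} expresses every coordinate of $\sqrt{n}\,vec(\hat{\textbf{W}} - \textbf{I}_p)$, up to an $o_P(1)$ remainder, as a fixed linear combination of $\sqrt{n}\hat{s}_{kl}$, $\sqrt{n}\hat{r}_{kl}$, $\sqrt{n}\hat{r}_{lk}$, $\sqrt{n}\hat{q}_{kl}$, $\sqrt{n}\hat{q}_{lk}$. The classical multivariate CLT, applied to the i.i.d.\ sample $\{\textbf{z}_i\}$ under the stated moment conditions (sixth moments in (i), eighth moments in (ii) and (iii)), yields the joint limiting multinormality of these statistics with mean $\textbf{0}$ and the covariances listed in Table \ref{tab:cov}. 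Slutsky's theorem then produces the multivariate normality of $\sqrt{n}\,vec(\hat{\textbf{W}} - \textbf{I}_p)$ with mean $\textbf{0}$ and reduces the whole statement to an algebraic variance computation.

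For the diagonal entries in all three cases one has $\sqrt{n}(\hat{w}_{kk} - 1) = -\tfrac{1}{2}\sqrt{n}(\hat{s}_{kk} - 1) + o_P(1)$ and $Var(\sqrt{n}\hat{s}_{kk}) = E(z_{ik}^4) - 1 = \beta_k - 1 = \kappa_k + 2$, giving the stated $ASV(\hat{w}_{kk}) = (\kappa_k + 2)/4$.

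For the off-diagonal entries, cases (i) and (ii) proceed by expanding $Var(\sqrt{n}\hat{\psi}_{1kl})$ and $Var(\sqrt{n}\hat{\psi}_{2kl})$ using bilinearity, reading off the relevant covariances from Table \ref{tab:cov}, and simplifying using the identities $\gamma_l^2\nu_l = \gamma_l^2(\nu_l - \gamma_l^2) + \gamma_l^4$ and the analogous one for $\kappa_l^2\omega_l$. The denominators carry over as squared constants $(\gamma_k^2 + \gamma_l^2)^2$ and $(\kappa_k^2 + \kappa_l^2)^2$, producing $\zeta_{11}$ and $\zeta_{22}$ as stated. In case (iii) one writes
\[
Var\bigl(\alpha\sqrt{n}\hat{\psi}_{1kl} + (1 - \alpha)\sqrt{n}\hat{\psi}_{2kl}\bigr) = \alpha^2\zeta_{11} + (1 - \alpha)^2\zeta_{22} + 2\alpha(1 - \alpha)\zeta_{12},
\]
so cases (i) and (ii) handle the two pure-square contributions and only the cross term $\zeta_{12} = Cov(\sqrt{n}\hat{\psi}_{1kl}, \sqrt{n}\hat{\psi}_{2kl})$ is new.

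The main obstacle is the bookkeeping in this cross-covariance: it expands into nine pairings between the $\hat{r}$-, $\hat{q}$- and $\hat{s}$-statistics, each requiring the appropriate entry of Table \ref{tab:cov}. After substitution many contributions cancel (in particular those arising from pairing the $-\gamma_k^2\hat{s}_{kl}$ coefficient in $\hat{\psi}_{1kl}$ against the $\hat{q}$-terms of $\hat{\psi}_{2kl}$), and the relation $\beta_l - 3 = \kappa_l$ collapses the remaining $\gamma_l^2$-block into $\gamma_l^2\kappa_l^2$. The surviving quantity is precisely $\gamma_k\kappa_k(\eta_k - \gamma_k\beta_k) + \gamma_l\kappa_l(\eta_l - \gamma_l\beta_l) + \gamma_l^2\kappa_l^2$, as claimed. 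No new ideas beyond Theorem \ref{theo:jade_asymp} and the covariance table are required.
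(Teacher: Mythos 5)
Your proposal is correct and follows exactly the route the paper intends: the paper gives no separate proof of this corollary, treating it as an immediate consequence of the linearizations in Theorem \ref{theo:jade_asymp}, the joint limiting multinormality of $\sqrt{n}\hat{s}_{kl}$, $\sqrt{n}\hat{r}_{kl}$, $\sqrt{n}\hat{q}_{kl}$ from the CLT, and the covariances in Table \ref{tab:cov}. Your variance and cross-covariance computations (including the cancellations and the use of $\kappa_l = \beta_l - 3$ to produce the $\gamma_l^2\kappa_l^2$ and $\kappa_l^4$ terms) check out.
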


Comparison of the asymptotic variances in Corollary \ref{cor:sym_asymp} and Corollary \ref{cor:jade_asymp} immediately yields the following result.

\begin{mycorollary}
(i) If $\alpha=0$ or $\alpha=1$, the asymptotic variances of the estimates based on symmetric projection pursuit are the same as those based on all cumulant matrices.
\newline
(ii)
The asymptotic variances of the estimates based on symmetric projection pursuit are the same as those based on all cumulant matrices
if their respective weights $\alpha_{S}$ and $\alpha_{J}$ satisfy $\alpha_{S} = 4 \alpha_J/(3 + \alpha_J)$.
\end{mycorollary}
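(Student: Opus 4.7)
The plan is to observe that, after a harmless reparametrisation, the off-diagonal asymptotic variances in Corollary \ref{cor:sym_asymp}(iii) and Corollary \ref{cor:jade_asymp}(iii) are literally the same rational function of a common single parameter. Writing $A = \gamma_k^2 + \gamma_l^2$ and $B = \kappa_k^2 + \kappa_l^2$, both off-diagonal ASVs take the form
\[
\frac{c_1^2 \zeta_{11} + c_2^2 \zeta_{22} + 2 c_1 c_2 \zeta_{12}}{(c_1 A + c_2 B)^2},
\]
where $\zeta_{11}, \zeta_{22}, \zeta_{12}$ are defined identically in the two corollaries. For the symmetric projection pursuit functional one reads off $(c_1, c_2) = (3 \alpha_S,\, 4(1-\alpha_S))$, and for the functional based on all cumulant matrices $(c_1, c_2) = (\alpha_J,\, 1-\alpha_J)$. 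The diagonal ASVs already coincide, both equal to $(\kappa_k + 2)/4$, so matching the two estimators reduces to matching the off-diagonal expressions.

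Next I would use that numerator and denominator are both homogeneous of degree two in $(c_1, c_2)$, so the whole ratio is homogeneous of degree zero and depends only on $t := c_2/c_1$. Consequently the two asymptotic variances agree (for every admissible source distribution) if and only if the two induced values of $t$ coincide, namely
\[
\frac{4(1 - \alpha_S)}{3 \alpha_S} \;=\; \frac{1 - \alpha_J}{\alpha_J}.
\]
Cross-multiplying gives $4\alpha_J(1 - \alpha_S) = 3\alpha_S(1 - \alpha_J)$, which simplifies to $\alpha_S(3 + \alpha_J) = 4 \alpha_J$, and hence to the claimed identity $\alpha_S = 4\alpha_J/(3 + \alpha_J)$. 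This is part (ii).

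Part (i) then drops out by specialisation: $\alpha_J = 0$ yields $\alpha_S = 0$, and $\alpha_J = 1$ yields $\alpha_S = 1$, matching the pairs of boundary cases. Alternatively, one may just compare Corollary \ref{cor:sym_asymp}(i)--(ii) with Corollary \ref{cor:jade_asymp}(i)--(ii) directly and note that the expressions are written identically. I do not anticipate any real obstacle here: the genuine content has already been absorbed into the asymptotic expansions of Theorems \ref{theo:sym_asymp} and \ref{theo:jade_asymp} and their corollaries, and all that remains is the structural recognition of the common rational form together with the elementary manipulation above.
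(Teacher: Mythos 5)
Your proposal is correct and matches the paper's (implicit) argument: the paper offers no separate proof, stating only that the result follows by comparing Corollaries \ref{cor:sym_asymp} and \ref{cor:jade_asymp}, which is exactly the direct comparison you carry out. Your observation that both off-diagonal variances are the same degree-zero homogeneous function of $(c_1,c_2)$ with $(3\alpha_S,4(1-\alpha_S))$ versus $(\alpha_J,1-\alpha_J)$, so that matching $c_2/c_1$ gives $\alpha_S=4\alpha_J/(3+\alpha_J)$, is precisely the intended computation.
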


\section{Comparison of the estimates} \label{sec:simu}

\subsection{Projection pursuit for cluster identification}
Given that all the methods allow tuning in the form of the weighting parameter $\alpha$, a natural question is whether there exists some optimal choice of weighting for any particular pair of source distributions. We approach this question first in the context of  cluster identification. This approach is not new, for both skewness and kurtosis have been used before for similar purposes. In \cite{jones1987projection} the authors use approximative techniques to find a linear combination of squared skewness and kurtosis to use as \textit{entropy index} in projection pursuit. In \cite{pena2001cluster} the authors project the data into directions that have extremal kurtosis in hopes of discovering clusters.

For the model, assume that the vector of independent components $\textbf{z}$ is a mixture of two multivariate normal distributions $\textbf{z}^*$, namely
\[\textbf{z}^* \sim \pi \cdot \mathcal{N}_p(\textbf{0}, \textbf{I}_p) + (1-\pi) \cdot \mathcal{N}_p(\mu \textbf{e}_1, \textbf{I}_p),\]
standardized to have zero mean and identity covariance matrix and where $\pi \in (0, 1)$ and $\mu \in \mathbb{R}\backslash \{0\}$. Under the model, the only independent component having non-zero skewness or kurtosis is the first one, meaning that only the first row of the unmixing matrix is identifiable (up to sign). However, this is enough as the first component carries all the information needed for the group separation, the remaining components can be considered just as noise.

We use the projection pursuit approach to estimate the direction $\textbf{w}$ of Fisher's linear discriminant  subspace. Note next that $\gamma_l=\kappa_l=0$ for $l>1$ and  the asymptotic variances of the estimated elements $\hat{w}_{1l},\, l>1,$ are the same for the deflation-based and symmetric projection pursuit. Thus to choose the optimal weighting for group separation we want to minimize the variance
\[f(\alpha; \pi, \mu) := \frac{9 \alpha^2 \zeta_{11k} + 16 (1 - \alpha)^2 \zeta_{22k} + 24 \alpha (1 - \alpha) \zeta_{12k}}{(3 \alpha \gamma_k^2 + 4 (1 - \alpha) \kappa_k^2)^2}, \quad \alpha  \in [0, 1],\]
for $k = 1$, where $\zeta_{11k}, \zeta_{22k}$ and $\zeta_{12k}$ are as in Corollary \ref{cor:sep_asymp}.

\begin{figure}[t]
    \centering
    \includegraphics[width=1\textwidth]{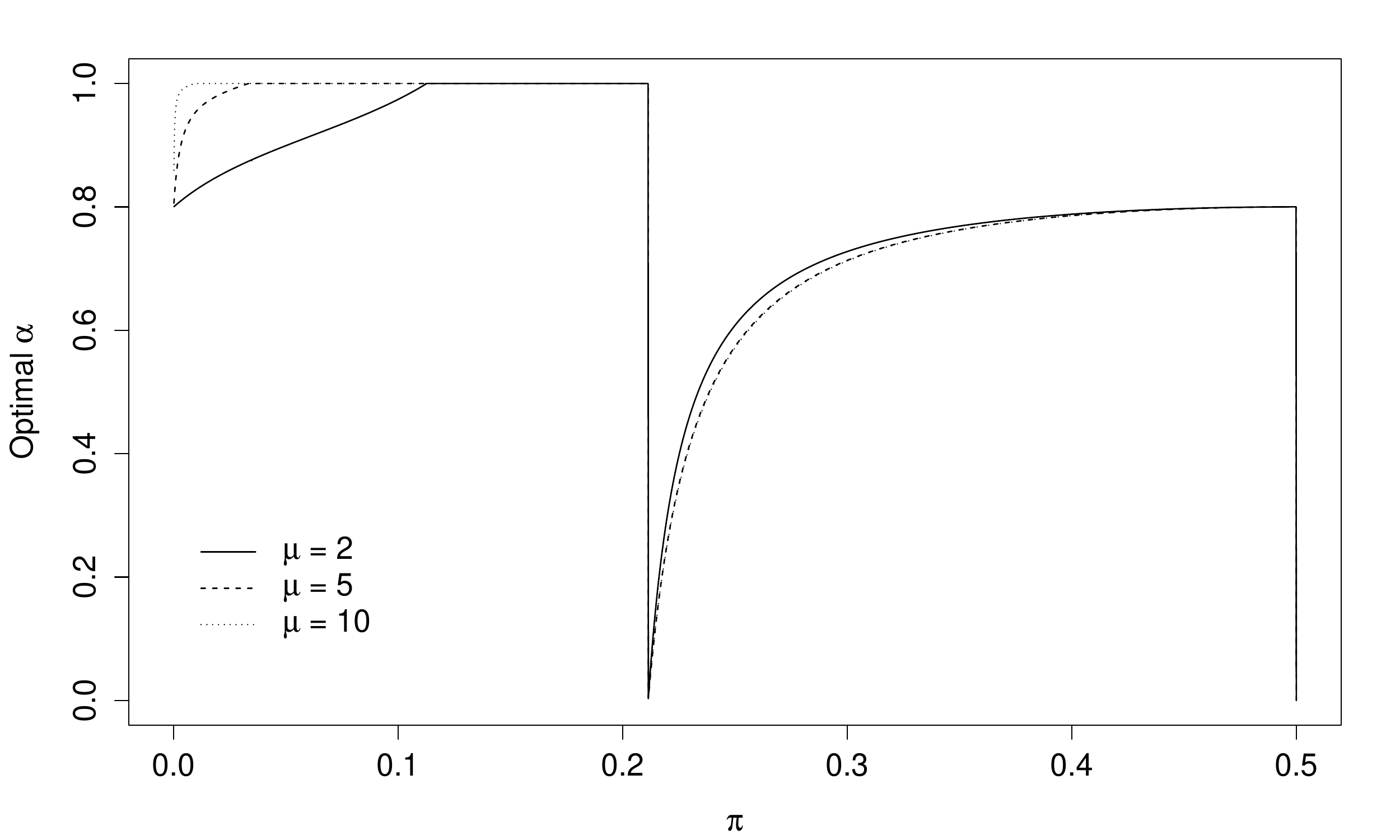}
    \caption{The optimal choices of weight $\alpha$ for different values of $\pi$ and $\mu$.}
    \label{fig:sim_discr}
\end{figure}

Using the function \textit{optimize} in R \citep{Rcore} we searched the global minimum of $f$ for three choices of $\mu = 2, 5, 10$ and the results are shown in Figure \ref{fig:sim_discr} (we only need to consider the interval $\pi \in (0, 0.5]$ due to symmetry). First, the plots show that the choice of $\mu$ has hardly any effect on the optimal value of $\alpha$. Secondly, we see two discontinuity points, namely $\pi = (3 + \sqrt{3})^{-1} =: \pi_0$ and $\pi=0.5$ which arise due to excess kurtosis and skewness respectively vanishing in those particular values of $\pi$. And thirdly, we observe that the curve goes to zero when approaching the point $\pi_0$ from the right. This counterintuitively suggests using only kurtosis even though $\kappa_1 \approx 0$ in the vicinity of $\pi_0$. However, a careful examination shows that when $\pi = \pi_0 + \epsilon$ for some small $\epsilon > 0$, the function $f$ indeed has a global minimum near zero but it also has $\text{lim}_{\alpha \rightarrow 0+} f(\alpha) = \infty$. Thus for practical purposes the global minimum might be too close to zero to be of any use.


Hence, based on the above considerations, we thus suggest $\alpha = 0.8$ as a good, all-around weight for use in group separation of normal mixtures. This choice is further supported by the fact that it corresponds to the weights given to squared skewness and squared excess kurtosis in the classical Jarque-Bera test statistic for normality $(n/6) \hat{\gamma}^2 + (n/24)\hat{\kappa}^2$ \citep{jarque1987test} (under normality,  $(n/6) \hat{\gamma}^2$ and $(n/24)\hat{\kappa}^2$ are independent and both have a chi-squared distribution with one degree of freedom).
This corresponds  also to the effective value derived in \cite{jones1987projection}. 

\subsection{Comparison of asymptotic variances in IC models}

Due to the affine equivariance of the estimates, it is sufficient to consider the behavior of $\hat{\textbf{W}}$ only in the case $\mathbf{\Omega}=\textbf{I}_p$. For all estimates, 
$\sqrt{n}vec(\hat{\textbf{W}}-\textbf{I}_p)\to_d N_{p^2}(\textbf{0}, \mathbf{\Upsilon})$ and the comparison should then be made using the asymptotic covariance matrix $\mathbf{\Upsilon}$. Then 
$tr(\mathbf{\Upsilon})$ is 
\[ \sum_{k=1}^p\sum_{l=1}^p ASV(\hat{w}_{kl}) = \sum_{k=1}^p ASV(\hat{w}_{kk}) +\sum_{k=1}^{p-1}\sum_{l=k+1}^p \left(ASV(\hat{w}_{kl}) + ASV(\hat{w}_{lk})\right) \]
where $\sum_{k=1}^p ASV(\hat{w}_{kk})$ is the same for all estimates. As in  \cite{miettinen2014fourth} we then use the values $ ASV(\hat{w}_{kl}) + ASV(\hat{w}_{lk})$ for the comparison
of the estimates for different choices of $k$th and $l$th marginal distributions. Surprisingly, for all deflation-based and symmetric projection pursuit estimates, this criterion value depends only on the $k$th and $l$th marginal distributions. For the estimates that use compound cumulant matrices, we use $ ASV(\hat{w}_{12}) + ASV(\hat{w}_{21})$ with $p=2$ and the same marginal distributions, as it is in fact the lower bound of $ ASV(\hat{w}_{kl}) + ASV(\hat{w}_{lk})$ if all other components are symmetric.

As the asymptotic variances of the symmetric projection pursuit approach and of the approach based on all cumulant matrices are the same (with adjusted weights), we in fact have three different methods to compare, namely, deflation-based PP, symmetric PP and the estimate based on  compound cumulant matrices. For each method we distinguish the versions using third cumulants only ($\alpha=1$), fourth cumulants only ($\alpha=0$),  and third and fourth cumulants with the weight $\alpha = 0.8$. The $k$th and $l$th marginal distributions are standardized versions of the exponential power distribution EP$(\alpha)$ or Gamma$(\alpha)$ with densities
\[
f(z)=\kappa_1 e^{-\kappa_2 |z|^{\alpha}} \ \ \mbox{or} \ \ f(z)=\kappa_1 z^{\alpha-1} e^{-\kappa_2 z}, \ z>0, 
\]
with $\kappa_1,\kappa_2>0$ and positive shape parameter $\alpha$. The asymptotic variances (and their lower bounds) then depend on the marginal distributions only through their shape parameters $\alpha$ 
For more details on the distributions in a similar study see \cite{miettinen2014fourth}. The values of $ASV(\hat{w}_{kl}) + ASV(\hat{w}_{lk})$ for different combinations of families and parameters are shown in Figures \ref{fig:sim_both} and \ref{fig:sim_gamma}. We do not report the results in cases where both components come from the symmetric exponential power family. In this case, the asymptotic variances of the estimates with $0<\alpha<1$ are the same as the asymptotic variance of the estimate with $\alpha=0$ and the results for $\alpha=0$ are already given in \cite{miettinen2014fourth}. In figures,  a darker shade indicates a larger value so that the performance of a particular method is at its best in the areas of lighter color.

From the contour plots we see that in the cases considered the performances of the estimates based on compound cumulant matrices are clearly the worst. One reason for this is, that none of them permit two sources having exactly the same distributions, causing the darker shades in the diagonals of Figure \ref{fig:sim_gamma} (see the Assumptions \ref{assu:distskew}, \ref{assu:distkurt} and \ref{assu:distboth} in Section \ref{sec:ICmodel}). It also seems that the symmetric projection pursuit (and the multiple cumulant method) gives the best performance, although the deflation-based methods do not come far behind.

Note, that the symmetry of exponential power distribution is evident in the upper two plots on the left-hand side of Figure \ref{fig:sim_both} where the sum of variances depends clearly only on the properties of the gamma distribution. The same two plots also showcase the fact that in the bivariate case when exactly one of the independent components is symmetric, both skewness-based projection pursuit methods have the same asymptotic behavior (as measured by the sum of off-diagonal asymptotic variances). The same would also hold for kurtosis, as can be verified by inspecting the results in Corollaries \ref{cor:sep_asymp} and \ref{cor:sym_asymp}.

\begin{figure}[t]
    \centering
    \includegraphics[width=1\textwidth]{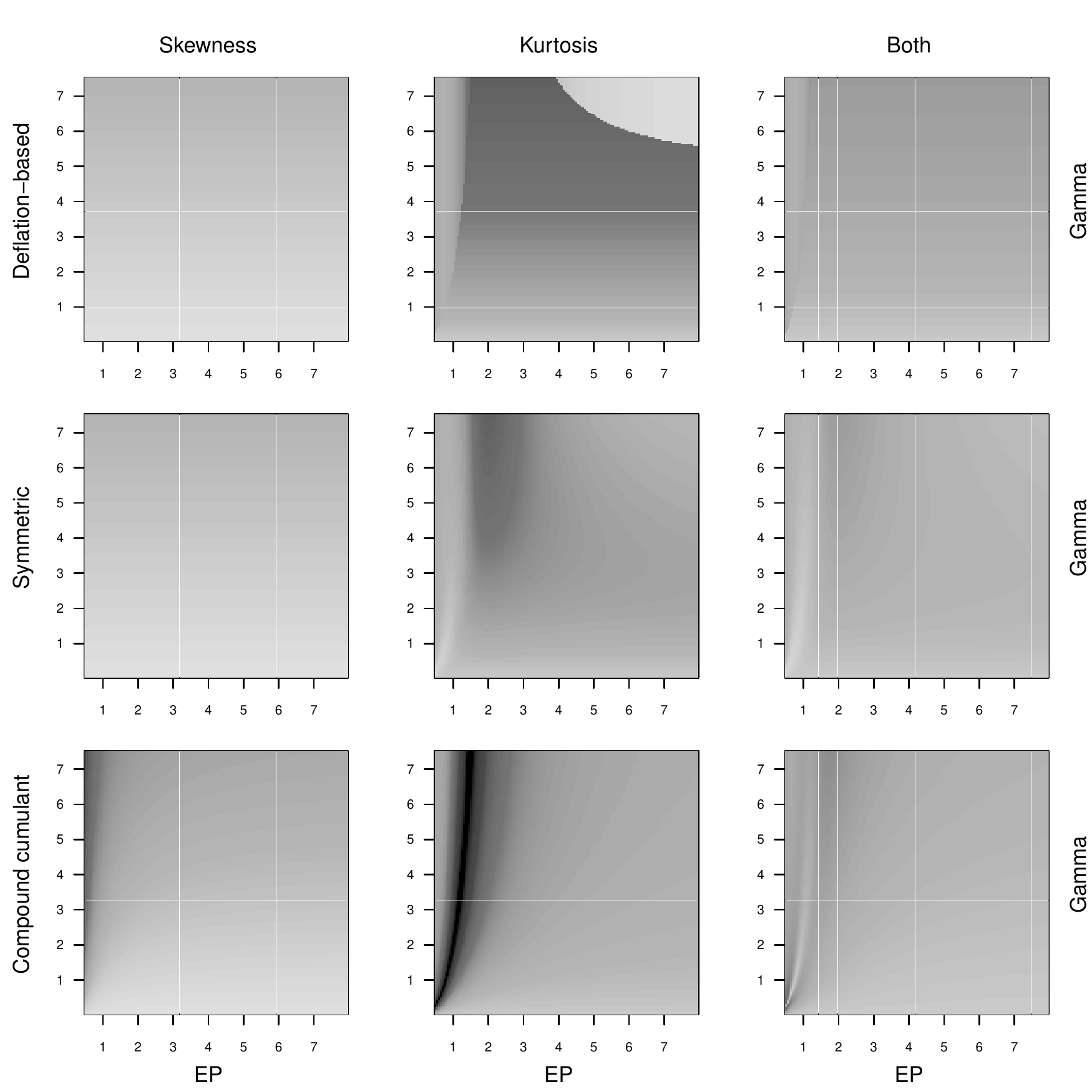}
    \caption{Contour plots of $ASV(\hat{w}_{12}) + ASV(\hat{w}_{21})$ for different combinations of methods and cumulants used when the $x$-axis independent component has an exponential power distribution and the $y$-axis independent component has a gamma distribution.}
    \label{fig:sim_both}
\end{figure}

\begin{figure}[t]
    \centering
    \includegraphics[width=1\textwidth]{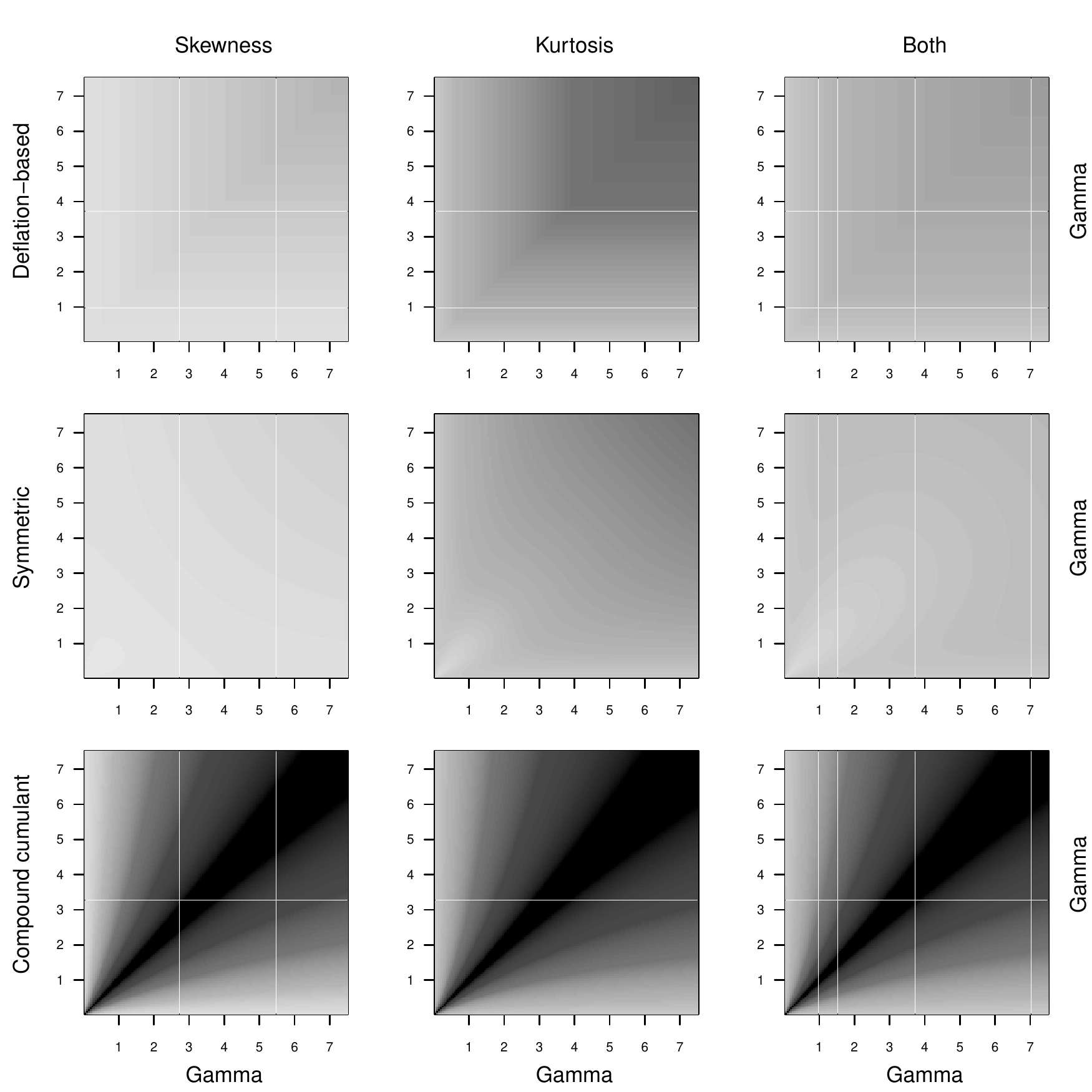}
    \caption{Contour plots of $ASV(\hat{w}_{12}) + ASV(\hat{w}_{21})$ for different combinations of methods and cumulants used when both independent components come from gamma distributions.}
    \label{fig:sim_gamma}
\end{figure}

\section{Discussion} \label{sec:conc}

In the previous sections, four different approaches for solving the independent component problem were thoroughly discussed. Each method was first precisely defined and then had its affine equivariance proven
 and estimating equations and algorithms provided, and finally the methods' asymptotic properties were derived. The main novelty in this paper is the combination of third and fourth cumulants in ICA where the weight given to skewness (or kurtosis) can be considered a tuning parameter. The special case of giving all weight to kurtosis
  yields then in the corresponding cases the deflation-based FastICA and the classic FOBI. Whereas the novel symmetric approach then gives perhaps a more natural version of the currently used symmetric FastICA approach.

The most surprising result here is the similar asymptotic behaviors of the symmetric projection pursuit and the method based on all cumulant matrices (including JADE).  Note that the squared symmetric projection pursuit is computationally much lighter than JADE, and could thus possibly replace the use of JADE in many applications. Following this discovery, a justified question to ask is whether moving from the absolute values to squares provides better results also in the general case of symmetric FastICA. This will be considered in a separate paper.
Another surprising result was that the compound cumulant approach needs special treatment to obtain affine equivariance when combing third and fourth compound cumulant matrices. Although the price to pay for this seems
relatively low as just a few stronger assumptions are needed. However as our comparisons indicate, this approach in general seems to be inferior to all other methods discussed here and its main advantage is its computational simplicity.

In the comparison section we established that all the methods can also be used successfully in cluster identification in the case of a multivariate normal mixture. Additionally, when using the projection pursuit methods for such a goal, a good rule of thumb for the choice of weights for squared skewness relative to squared kurtosis is to give 80\% of the weight to squared skewness, or alternatively, giving equal weights to standardized squared skewness and standardized squared excess kurtosis. This weighting then also coincides with the weighting used in the classical Jarque-Bera test of normality based on the same momentary quantities.


Finally, it is interesting that although all the methods considered are defined very differently from each other, the corresponding expressions for the asymptotic variances in Corollaries \ref{cor:sep_asymp}, \ref{cor:sym_asymp}, \ref{cor:fobi_asymp} and \ref{cor:jade_asymp} exhibit pleasing symmetry. Based on this pattern one could even make a highly educated guess on what the asymptotic properties of the even higher moment versions of the methods would be (assuming that the methods actually exist).

\section{Acknowledgements}
Jari Miettinen kindly provided the code used for the comparison of the methods. This work was supported by the Academy of Finland (grant 268703).

\appendix

\setcounter{secnumdepth}{0}


\setcounter{aplemma}{0}

\makeatletter
\renewcommand{\theaplemma}{A\@arabic\c@aplemma}
\makeatother

\section{Appendix}\label{sec:proofs}

\begin{proof}[Proof of Theorem \ref{theo:sep_ineq}]
Note first, that under the assumptions of the independent component model the following two identities hold.
\[\gamma(\textbf{u}^T\textbf{z}) = \sum_{k=1}^p u_k^3\gamma_k \quad \text{and} \quad \kappa(\textbf{u}^T\textbf{z}) = \sum_{k=1}^p u_k^4\kappa_k. \]
Then, by using the Cauchy-Schwarz inequality and the fact that $u_k^2 \leq 1, \forall k$ we have
\begin{align*}
&\alpha_1 \gamma^2(\textbf{u}^T\textbf{z}) + \alpha_2 \kappa^2(\textbf{u}^T\textbf{z}) \\
\leq & \alpha_1\sum_{k=1}^p u_k^4 \gamma_k^2 + \alpha_2\sum_{k=1}^p u_k^6 \kappa_k^2 \\
\leq & \sum_{k=1}^p u_k^2 \underset{1 \leq l \leq p}{\text{max}}(\alpha_1 \gamma_l^2 + \alpha_2 \kappa_l^2),
\end{align*}
from which the result follows.
\end{proof}

\begin{proof}[Proof of Theorem \ref{theo:sep_asymp}]

We begin by proving the consistency of the estimator and due to the affine equivariance of the squared deflation-based projection pursuit functional $\textbf{W}$, we may without loss of generality restrict our attention to the case $\boldsymbol{\Omega} = \textbf{I}$ (this holds true for all the methods considered). Note then that the population and sample objective functions are of the forms
\[D(\textbf{u}) = \sum_{j=1}^J w_j \left( E\left[g_j(\textbf{u}^T \textbf{z})\right] \right)^2 \quad \text{and} \quad D_n(\textbf{u}) = \sum_{j=1}^J w_j \left( \frac{1}{n} \sum_{i=1}^n g_j(\textbf{u}^T \textbf{x}_{st,i}) \right)^2,\]
where $w_j$ are the weights given to the functions $g_j$. Consequently
\[\underset{\textbf{u}^T \textbf{u} = 1}{\text{sup}}|D(\textbf{u}) - D_n(\textbf{u})| \leq \sum_{j=1}^J w_j \underset{\textbf{u}^T \textbf{u} = 1}{\text{sup}} \left| \left( E\left[g_j(\textbf{u}^T \textbf{z})\right] \right)^2 - \left( \frac{1}{n} \sum_{i=1}^n g_j(\textbf{u}^T \textbf{x}_{st,i}) \right)^2 \right|. \]
The difference of squares then factorizes into form $(G_j + \hat{G}_j)(G_j - \hat{G}_j)$, where the first factor (for our choices of $g_j$) converges to finite constant due to the assumption on finiteness of moments and for the second factor we can use the uniform law of large numbers. As our choices for the functions $g_j$ are continuous and the set $\{ \textbf{u} \in \mathbb{R}^p : \textbf{u}^T \textbf{u} = 1 \}$ is compact, we then have $\text{sup}_{\textbf{u}^T \textbf{u} = 1} | D(\textbf{u}) - D_n(\textbf{u}) | \rightarrow_P 0$.

Now $D(\textbf{u})$ has the unique (up to sign) maximizer $\textbf{e}_1$, and applying the technique used in the proofs of \citet{miettinen2014deflation}, the above uniform convergence in probability implies the consistency of step 1 (up to sign), $\mathbb{P}(\|\hat{\textbf{w}}_1 - \textbf{e}_1 \| < \epsilon) \rightarrow_P 0$.

For the convergence of step 2, we follow in the vein of \citet{miettinen2014deflation} and move to the orthogonal complement $\hat{\textbf{u}}_1^\perp$ of the span of $\hat{\textbf{u}}_1$ and consider the functions $D_2(\textbf{v}) := D(\textbf{E}\textbf{v})$ and $D_{2,n}(\textbf{v}) := D_n(\hat{\textbf{E}}\textbf{v})$, where $\textbf{v} \in \mathbb{R}^{p-1}$, $\textbf{E} = (\textbf{e}_2, ..., \textbf{e}_p) \in \mathbb{R}^{p \times (p-1)}$ and $\hat{\textbf{E}}$ is chosen as the closest matrix to $\textbf{E}$ with respect to matrix norm such that the matrix $(\hat{\textbf{u}_1}, \hat{\textbf{E}})$ is orthogonal. Note that this is not restricting as $\textbf{E}$ and $\hat{\textbf{E}}$ are bases of $\textbf{e}_1^\perp$ and $\hat{\textbf{u}}_1^\perp$, respectively. The consistency of $\hat{\textbf{u}}_1$ also implies $\hat{\textbf{E}} \rightarrow_P \textbf{E}$.

Similar reasoning as used above with $D(\textbf{u})$ and $D_n(\textbf{u})$ in conjunction with the following convergence implied by the results in \citet{randles1982asymptotic} and the finiteness of moments and differentiability of our choice of functions $g_j$,
\[ \frac{1}{n}\sum_{i=1}^n g_j(\textbf{v}^T \hat{\textbf{E}}{}^T \textbf{x}_{st,i}) \rightarrow_P E\left[g_j(\textbf{v}^T\textbf{E}^T \textbf{z})\right], \]
can be used to prove the convergence, $\text{sup}_{\textbf{v}^T \textbf{v} = 1} | D_2(\textbf{v}) - D_{2,n}(\textbf{v}) | \rightarrow_P 0$. Observing then that $D(\textbf{v})$ has the unique (up to sign) maximizer $\textbf{e}_1$, the arguments used for $\hat{\textbf{u}}_1$ then show that $\hat{\textbf{v}} \rightarrow_P \textbf{e}_1$ and consequently $\hat{\textbf{u}}_2 = \hat{\textbf{E}} \hat{\textbf{v}} \rightarrow_P \textbf{e}_2$. Using similar constructions for $k = 3,...,p-1$ we get the consistency of the estimator up to sign-change, that is $\hat{\textbf{W}} \rightarrow_P \textbf{I}_p$.

For the asymptotic behavior of $\hat{\textbf{W}}$ we then consider the diagonal and off-diagonal elements of $\hat{\textbf{W}}$ separately, and starting with the diagonal elements we first establish the following Lemma.

\begin{aplemma}\label{lem:app_diag}
Assume that $\hat{\textbf{W}} = \hat{\textbf{U}} \hat{\textbf{S}}{}^{-1/2}$, where $\sqrt{n}(\hat{\textbf{S}} - \textbf{I}_p) = \mathcal{O}_P(1)$, $\sqrt{n}(\hat{\textbf{U}} - \textbf{I}_p) = \mathcal{O}_P(1)$ and $\hat{\textbf{U}} \in \mathcal{U}$. Then the following three hold.
\begin{alignat*}{2}
\sqrt{n} (\hat{w}_{kk} - 1) &= -\frac{1}{2} \sqrt{n} (\hat{s}_{kk} - 1) + o_P(1), \quad & k&=1,...,p, \\
\sqrt{n} \hat{w}_{kl} + \sqrt{n} \hat{w}_{lk} &= -\sqrt{n} \hat{s}_{kl} + o_P(1), \quad & k \neq l&=1,...,p, \\
\sqrt{n} \hat{w}_{kl} &= \sqrt{n} \hat{u}_{kl} -\frac{1}{2}\sqrt{n} \hat{s}_{kl} + o_P(1), \quad & k \neq l&=1,...,p.
\end{alignat*}
\end{aplemma}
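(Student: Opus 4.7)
The proof hinges on a single first-order Taylor expansion of the matrix square root together with the orthogonality of $\hat{\textbf{U}}$. The plan is as follows.

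First I would expand $\hat{\textbf{S}}^{-1/2}$ around $\textbf{I}_p$. Since $\sqrt{n}(\hat{\textbf{S}}-\textbf{I}_p)=O_P(1)$, writing $\hat{\textbf{S}}=\textbf{I}_p+n^{-1/2}\textbf{A}$ with $\textbf{A}=O_P(1)$ and using the smoothness of the map $\textbf{S}\mapsto \textbf{S}^{-1/2}$ at $\textbf{I}_p$ (whose derivative at $\textbf{I}_p$ acts as $\textbf{A}\mapsto -\tfrac12 \textbf{A}$), one obtains
\begin{equation*}
\sqrt{n}\bigl(\hat{\textbf{S}}^{-1/2}-\textbf{I}_p\bigr)=-\tfrac{1}{2}\sqrt{n}\bigl(\hat{\textbf{S}}-\textbf{I}_p\bigr)+o_P(1).
\end{equation*}
Then I would expand the product: from $\hat{\textbf{W}}-\textbf{I}_p=(\hat{\textbf{U}}-\textbf{I}_p)+(\hat{\textbf{S}}^{-1/2}-\textbf{I}_p)+(\hat{\textbf{U}}-\textbf{I}_p)(\hat{\textbf{S}}^{-1/2}-\textbf{I}_p)$ and the $O_P(n^{-1/2})$ size of the two factors in the last term,
\begin{equation*}
\sqrt{n}(\hat{w}_{kl}-\delta_{kl})=\sqrt{n}(\hat{u}_{kl}-\delta_{kl})-\tfrac{1}{2}\sqrt{n}(\hat{s}_{kl}-\delta_{kl})+o_P(1) \tag{$\ast$}
\end{equation*}
holds elementwise for all $k,l$. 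This already yields the third claim of the lemma, since for $k\neq l$ the Kronecker deltas vanish.

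Next I would exploit the orthogonality $\hat{\textbf{U}}\hat{\textbf{U}}^T=\textbf{I}_p$ to control the entries of $\hat{\textbf{U}}$. Writing $\hat{u}_{ij}=\delta_{ij}+n^{-1/2}b_{ij}$ with $b_{ij}=O_P(1)$, the diagonal constraint $\sum_m \hat{u}_{km}^2=1$ gives $2n^{-1/2}b_{kk}=-n^{-1}\sum_{m} b_{km}^2$, so $\sqrt{n}(\hat{u}_{kk}-1)=o_P(1)$. The off-diagonal constraint $\sum_m \hat{u}_{km}\hat{u}_{lm}=0$ for $k\neq l$ similarly forces $\sqrt{n}(\hat{u}_{kl}+\hat{u}_{lk})=o_P(1)$. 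Plugging the first identity into $(\ast)$ with $k=l$ yields claim 1; adding $(\ast)$ to its $(k,l)\leftrightarrow(l,k)$ counterpart and using $\hat{s}_{kl}=\hat{s}_{lk}$ together with $\sqrt{n}(\hat{u}_{kl}+\hat{u}_{lk})=o_P(1)$ yields claim 2.

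The one part that requires a little care is the Taylor expansion of $\hat{\textbf{S}}^{-1/2}$: because $\hat{\textbf{S}}^{-1/2}$ is defined as the symmetric square root of $\hat{\textbf{S}}^{-1}$, one should justify the expansion either via the power series $(\textbf{I}_p+n^{-1/2}\textbf{A})^{-1/2}=\textbf{I}_p-\tfrac12 n^{-1/2}\textbf{A}+O_P(n^{-1})$, valid once the eigenvalues of $\hat{\textbf{S}}$ concentrate near $1$, or via a smoothness argument applied to the matrix function. After that, everything reduces to elementary bookkeeping with the orthogonality constraints on $\hat{\textbf{U}}$ and the symmetry of $\hat{\textbf{S}}$.
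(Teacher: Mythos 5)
Your proposal is correct and follows essentially the same route as the paper: linearize $\hat{\textbf{S}}^{-1/2}$ to get $\sqrt{n}(\hat{\textbf{S}}^{-1/2}-\textbf{I}_p)=-\tfrac12\sqrt{n}(\hat{\textbf{S}}-\textbf{I}_p)+o_P(1)$, expand the product $\hat{\textbf{U}}\hat{\textbf{S}}^{-1/2}$ to first order for the third claim, and use the orthogonality of $\hat{\textbf{U}}$ (which forces $\sqrt{n}(\hat{u}_{kk}-1)=o_P(1)$ and $\sqrt{n}(\hat{u}_{kl}+\hat{u}_{lk})=o_P(1)$) for the first two. The only cosmetic differences are that the paper obtains the square-root linearization from an exact telescoping matrix identity rather than a Taylor/power-series expansion, and derives claims 1 and 2 from the matrix sum $\hat{\textbf{W}}+\hat{\textbf{W}}^T$ rather than elementwise, neither of which changes the substance.
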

To prove Lemma \ref{lem:app_diag} consider first the following identity.
\begin{align*}
\textbf{O}_p &= \hat{\textbf{S}}{}^{-1/2} \sqrt{n} ( \hat{\textbf{S}} - \textbf{I}_p) \hat{\textbf{S}}{}^{-1/2} + \hat{\textbf{S}}{}^{-1/2} \sqrt{n} (\hat{\textbf{S}}{}^{-1/2} - \textbf{I}_p) + \sqrt{n}( \hat{\textbf{S}}{}^{-1/2} - \textbf{I}_p) \\
&= \sqrt{n} ( \hat{\textbf{S}} - \textbf{I}_p) + 2 \sqrt{n}( \hat{\textbf{S}}{}^{-1/2} - \textbf{I}_p) + o_P(1).
\end{align*}
For the second equality above, note that $\sqrt{n} ( \hat{\textbf{S}} - \textbf{I}_p) = - \sqrt{n} (\hat{\textbf{S}}{}^{-1/2} - \textbf{I}_p) (\hat{\textbf{S}} + \hat{\textbf{S}}{}^{1/2})$ which implies that $\sqrt{n} (\hat{\textbf{S}}{}^{-1/2} - \textbf{I}_p)$ is bounded in probability, thus allowing us to conclude the identity $\sqrt{n}(\hat{\textbf{S}}{}^{-1/2} - \textbf{I}_p) = -(1/2)\sqrt{n}(\hat{\textbf{S}} - \textbf{I}_p) + o_P(1)$.

Using similar techniques one can prove that $\sqrt{n}(\hat{\textbf{U}}{}^T - \textbf{I}_p) = -\sqrt{n}(\hat{\textbf{U}} - \textbf{I}_p) + o_P(1)$ and $\sqrt{n}(\hat{\textbf{W}} - \textbf{I}_p) = \sqrt{n}(\hat{\textbf{U}} - \textbf{I}_p) + \sqrt{n}(\hat{\textbf{S}}{}^{-1/2} - \textbf{I}_p) + o_P(1)$. As a consequence of these we then get the third claim of the lemma.

Consider then the sum of $\hat{\textbf{W}}$ with its transpose $\hat{\textbf{W}} + \hat{\textbf{W}}{}^T = \hat{\textbf{U}} \hat{\textbf{S}}{}^{-1/2} + \hat{\textbf{S}}{}^{-1/2} \hat{\textbf{U}}{}^T$.
\begin{align*}
\sqrt{n}(\hat{\textbf{W}} + \hat{\textbf{W}}{}^T - 2\textbf{I}_p) &= \sqrt{n}(\hat{\textbf{U}} - \textbf{I}_p) \hat{\textbf{S}}{}^{-1/2} + \sqrt{n}( \hat{\textbf{S}}{}^{-1/2} - \textbf{I}_p) \\
& \quad + \sqrt{n}( \hat{\textbf{S}}{}^{-1/2} - \textbf{I}_p) \hat{\textbf{U}}{}^T + \sqrt{n}(\hat{\textbf{U}}{}^T - \textbf{I}_p) + o_P(1) \\
& = - \sqrt{n}( \hat{\textbf{S}} - \textbf{I}_p) + o_P(1),
\end{align*}
from which the first two claims follow.

For the asymptotic behavior of the off-diagonal elements we require in the current proof and the proof of Theorem \ref{theo:sym_asymp} the following estimators.
\begin{alignat*}{4}
& \hat{h}_{3k} = \frac{1}{n} \sum_{i=1}^n (\hat{\textbf{w}}_k^T \tilde{\textbf{z}}_i)^3, \qquad & \hat{\textbf{T}}_{3k} &= \frac{1}{n} \sum_{i=1}^n (\hat{\textbf{w}}_k^T \tilde{\textbf{z}}_i)^2 \tilde{\textbf{z}}_i, \\
& \hat{h}_{4k} = \frac{1}{n} \sum_{i=1}^n (\hat{\textbf{w}}_k^T \tilde{\textbf{z}}_i)^4 - 3, \qquad & \hat{\textbf{T}}_{4k} &= \frac{1}{n} \sum_{i=1}^n (\hat{\textbf{w}}_k^T \tilde{\textbf{z}}_i)^3 \tilde{\textbf{z}}_i,
\end{alignat*}
satisfying $\hat{h}_{3k} \rightarrow_P \gamma_k, \hat{h}_{4k} \rightarrow_P \kappa_k, \hat{\textbf{T}}_{3k} \rightarrow_P \gamma_k \textbf{e}_k$ and $\hat{\textbf{T}}_{4k} \rightarrow_P \beta_k \textbf{e}_k$. Note then, that in terms of $\hat{\textbf{W}} = (\hat{\textbf{w}}_1,...,\hat{\textbf{w}}_p)^T = \hat{\textbf{U}} \hat{\textbf{S}}{}^{-1/2}$ the estimating equations have the form
\[\hat{\boldsymbol{\Gamma}}_k = \hat{\textbf{S}} (\sum_{j=1}^k \hat{\textbf{w}}_j \hat{\textbf{w}}_j^T) \hat{\boldsymbol{\Gamma}}_k, \]
where $\hat{\boldsymbol{\Gamma}}_k = 3 \alpha \hat{h}_{3k} \hat{\textbf{T}}_{3k} + 4 (1 - \alpha) \hat{h}_{4k}  \hat{\textbf{T}}_{4k}$. Then, using Equation (5) from \cite{nordhausen2011deflation} we get the identity
\begin{align*}
\textbf{J}_k \sqrt{n} (\hat{\boldsymbol{\Gamma}}_k - \boldsymbol{\Gamma}_k \textbf{e}_k) =& \boldsymbol{\Gamma}_k [ \sqrt{n} (\hat{\textbf{S}} - \textbf{I}_p) \textbf{e}_k + \sum_{j=1}^k \textbf{e}_j \textbf{e}_k^T \sqrt{n} ( \hat{\textbf{w}}_j - \textbf{e}_j) \numberthis \label{eq:app_nord5} \\
+& \sqrt{n}(\hat{\textbf{w}}_k - \textbf{e}_k) ] + o_P(1),
\end{align*}
where $\textbf{J}_k = \sum_{j>k} \textbf{e}_j \textbf{e}_j^T$ and $\boldsymbol{\Gamma}_k = 3 \alpha \gamma_k^2 + 4 (1 - \alpha) \kappa_k \beta_k$. Next, using Equation (3) from \cite{nordhausen2011deflation} separately for $\hat{\textbf{T}}_{3k}$ and $\hat{\textbf{T}}_{4k}$ gives the following two identities.
\begin{align*}
\sqrt{n} (\hat{\textbf{T}}_{3k} - \gamma_k \textbf{e}_k) &= \sqrt{n} \hat{\textbf{r}}_k - 2 \textbf{e}_k \textbf{e}_k^T \sqrt{n} \bar{\textbf{z}} + 2 \gamma_k \textbf{E}^{kk} \sqrt{n} (\hat{\textbf{w}}_k - \textbf{e}_k) + o_P(1), \numberthis \label{eq:app_t3k} \\
\sqrt{n} (\hat{\textbf{T}}_{4k} - \beta_k \textbf{e}_k) &= \sqrt{n} \hat{\textbf{q}}_k - 3 \gamma_k \textbf{e}_k \textbf{e}_k^T \sqrt{n} \bar{\textbf{z}} \numberthis \label{eq:app_t4k} \\
&+ 3 (\textbf{I}_p + (\beta_k - 1) \textbf{E}^{kk}) \sqrt{n} (\hat{\textbf{w}}_k - \textbf{e}_k) + o_P(1),
\end{align*}
where $\hat{\textbf{r}}_k = (1/n) \sum_{i=1}^n (z_{ik}^2 - 1) \textbf{z}_i$ and $\hat{\textbf{q}}_k = (1/n) \sum_{i=1}^n (z_{ik}^3 - \gamma_k) \textbf{z}_i$. Using Equations \eqref{eq:app_t3k} and \eqref{eq:app_t4k} together with the fact that $\sqrt{n} (\hat{h}_{3k} \hat{\textbf{T}}_{3k} - \gamma_k^2 \textbf{e}_k) = \gamma_k \sqrt{n} ( \hat{\textbf{T}}_{3k} - \gamma_k \textbf{e}_k) + \gamma_k \sqrt{n} (\hat{h}_{3k} - \gamma_k) \textbf{e}_k + o_P(1)$ (and the analogy for $\hat{\textbf{T}}_{4k}$) we get an alternative expression for $\sqrt{n} (\hat{\boldsymbol{\Gamma}}_k - \boldsymbol{\Gamma}_k \textbf{e}_k)$ which can be substituted into Equation \eqref{eq:app_nord5}. Inspecting the result element-wise then yields the following two equations from which the asymptotic result follows.
\begin{align*}
0 &= \sqrt{n} \hat{s}_{lk} + \sqrt{n} \hat{w}_{lk} + \sqrt{n} \hat{w}_{kl} + o_P(1), \quad l < k,
\end{align*}
and
\begin{align*}
\quad & 3 \alpha \gamma_k \sqrt{n} \hat{r}_{kl} + 4 (1 - \alpha) \kappa_k ( \sqrt{n} \hat{q}_{kl} + 3 \sqrt{n} \hat{w}_{kl})\\
= \quad & (3 \alpha \gamma_k^2 + 4 (1 - \alpha) \kappa_k \beta_k)(\sqrt{n} \hat{s}_{lk} + \sqrt{n} \hat{w}_{kl}) + o_P(1), \quad l > k.
\end{align*}
\end{proof}

\begin{proof}[Proof of Theorem \ref{theo:sym_ineq}]
For the proof we require the following Lemma.
\begin{aplemma}\label{lem:app_ineq}
Let a $p \times p$ matrix $\textbf{U} \in \mathcal{U}$, $\textbf{b} \in \mathbb{R}^p$ and $r \in \mathbb{N}, \, r \geq 2$. Then
\[\sum_{i=1}^p \sum_{k=1}^p \sum_{l=1}^p u_{ik}^r u_{il}^r b_k b_l \leq \sum_{k=1}^p b_k^2. \]
\end{aplemma}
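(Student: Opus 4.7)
The plan is to rewrite the triple sum as a single sum of squares and then apply Cauchy--Schwarz together with the two orthogonality identities $\sum_k u_{ik}^2 = 1$ (rows) and $\sum_i u_{ik}^2 = 1$ (columns) that hold for every $\mathbf{U} \in \mathcal{U}$.

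First I would observe that the $k,l$ double sum factorizes, giving
\[
\sum_{i=1}^p \sum_{k=1}^p \sum_{l=1}^p u_{ik}^r u_{il}^r b_k b_l = \sum_{i=1}^p \left( \sum_{k=1}^p u_{ik}^r b_k \right)^{\!2}.
\]
So it suffices to bound each inner squared sum by $\sum_k u_{ik}^2 b_k^2$, because summing that over $i$ and swapping the order of summation yields $\sum_k b_k^2 \sum_i u_{ik}^2 = \sum_k b_k^2$ by column-orthonormality, which is exactly the desired right-hand side.

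To bound the inner squared sum, I would split the exponent as $u_{ik}^r = u_{ik}^{r-1} \cdot u_{ik}$ and apply the Cauchy--Schwarz inequality with $x_k = u_{ik}^{r-1}$ and $y_k = u_{ik} b_k$, obtaining
\[
\left( \sum_{k=1}^p u_{ik}^r b_k \right)^{\!2} \le \left( \sum_{k=1}^p u_{ik}^{2r-2} \right) \left( \sum_{k=1}^p u_{ik}^2 b_k^2 \right).
\]
Since $\mathbf{U}$ is orthogonal we have $|u_{ik}| \le 1$ for all $i,k$, and because $r \ge 2$ the exponent satisfies $2r-2 \ge 2$, hence $u_{ik}^{2r-2} \le u_{ik}^2$. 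Row-orthonormality then gives $\sum_k u_{ik}^{2r-2} \le \sum_k u_{ik}^2 = 1$, which reduces the Cauchy--Schwarz estimate to the desired inequality $(\sum_k u_{ik}^r b_k)^2 \le \sum_k u_{ik}^2 b_k^2$.

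The argument is short and I do not anticipate a real obstacle; the only mild subtlety is the sign issue when $r$ is odd, which makes $u_{ik}^{r-1}$ potentially negative, but this is harmless because Cauchy--Schwarz only sees the squares $u_{ik}^{2r-2}$. The two key structural facts being exploited are that the rows \emph{and} columns of $\mathbf{U}$ are unit vectors, which is precisely what allows the double appearance of $\sum u_{ik}^2 = 1$ in the two reductions.
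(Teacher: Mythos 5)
Your proof is correct and essentially the same as the paper's: both rewrite the triple sum as $\sum_i \bigl(\sum_k u_{ik}^r b_k\bigr)^2$, apply Cauchy--Schwarz after splitting off one factor of $u_{ik}$, use $u_{ik}^{2r-2}\le u_{ik}^2$ from $|u_{ik}|\le 1$ and $r\ge 2$, and finish with the row and column normalizations of $\textbf{U}$. The only cosmetic difference is which of the two Cauchy--Schwarz vectors carries $b_k$, which merely shifts the stage at which the power-reduction step is applied.
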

To prove Lemma \ref{lem:app_ineq} we first utilize the Cauchy-Schwarz inequality.
\[\sum_{i=1}^p \sum_{k=1}^p \sum_{l=1}^p u_{ik}^r u_{il}^r b_k b_l = \sum_{i=1}^p \left( \sum_{k=1}^p (u_{ik}) (u_{ik}^{r-1} b_k) \right)^2 \leq \sum_{i=1}^p \sum_{k=1}^p u_{ik}^{2r-2} b_k^2. \]
Then observing that $u_{ik}^{2r-2} = u_{ik}^2 u_{ik}^{2r-4} \leq u_{ik}^2$ gives the desired result.

The inequalities of Theorem \ref{theo:sym_ineq} then easily follow by first expanding the left-hand sides under the assumptions of the independent component model in \eqref{eq:icm_icmodel} to yield
\[
\sum_{i=1}^p \sum_{k=1}^p \sum_{l=1}^p u_{ik}^3 u_{il}^3 \gamma_k \gamma_l \quad \text{and} \quad
\sum_{i=1}^p \sum_{k=1}^p \sum_{l=1}^p u_{ik}^4 u_{il}^4 \kappa_k \kappa_l.
\]
Then, for both cases, an application of Lemma \ref{lem:app_ineq} gives the desired result.

\end{proof}

\begin{proof}[Proof of Lemma \ref{lem:estimeq}]
The matrix form $\textbf{U} \textbf{T}^T = \textbf{T} \textbf{U}^T$ of the estimating equations follows easily by element-wise inspection. This yields further $(\textbf{T}^T \textbf{U})^2 = \textbf{T}^T \textbf{T}$ from which the result follows by first taking the symmetric square root of both sides.
\end{proof}

\begin{proof}[Proof of Theorem \ref{theo:sym_asymp}]
For the consistency, the uniform convergence in probability of the sample objective function to the population one follows easily from the proof of Theorem \ref{theo:sep_asymp} as the objective functions $D(\textbf{U})$ and $D_n(\textbf{U})$ are now just sums of the individual objective functions of the squared deflation-based projection pursuit. The desired result $\mathbb{P}( \| \hat{\textbf{W}} - \textbf{I}_p \| < \epsilon) \rightarrow 1, \, \forall \epsilon > 0$, is then proven similarly as in \citet{miettinen2014separation}

For the asymptotic behavior, Lemma \ref{lem:app_diag} takes care of the diagonal elements so we will only need to consider the off-diagonal elements. The sample versions of the estimating equations for $k,l = 1,...,p$ are
\[3 \alpha \hat{h}_{3k} \hat{\textbf{w}}_l^T \hat{\textbf{T}}_{3k} + 4 (1 - \alpha) \hat{h}_{4k} \hat{\textbf{w}}_l^T \hat{\textbf{T}}_{4k} = 3 \alpha \hat{h}_{3l} \hat{\textbf{w}}_k^T \hat{\textbf{T}}_{3l} + 4 (1 - \alpha) \hat{h}_{4l} \hat{\textbf{w}}_k^T \hat{\textbf{T}}_{4l}, \numberthis \label{eq:sym_ee}  \]
where $\hat{h}_{3k}$, $\hat{h}_{4k}$, $\hat{\textbf{T}}_{3k}$ and $\hat{\textbf{T}}_{4k}$ are as in the proof of Theorem \ref{theo:sep_asymp}. With a approach similar to the one used in the proof of Theorem 6 in \cite{miettinen2014fourth} we have that
\[\sqrt{n} \hat{h}_{3l} \hat{\textbf{w}}_k^T \hat{\textbf{T}}_{3l} = \gamma_l \sqrt{n} (\hat{\textbf{w}}_k - \textbf{e}_k)^T \gamma_l \textbf{e}_l + \gamma_l \textbf{e}_k^T \sqrt{n} (\hat{\textbf{T}}_{3l} - \gamma_l \textbf{e}_l) + o_P(1), \]
and
\[\sqrt{n} \hat{h}_{4l} \hat{\textbf{w}}_k^T \hat{\textbf{T}}_{4l}  = \kappa_l \sqrt{n} (\hat{\textbf{w}}_k - \textbf{e}_k)^T \beta_l \textbf{e}_l + \kappa_l \textbf{e}_k^T \sqrt{n} (\hat{\textbf{T}}_{4l} - \beta_l \textbf{e}_l) + o_P(1). \]

Substituting Equations \eqref{eq:app_t3k} and \eqref{eq:app_t4k} into the above expansions and then using the symmetry of the estimating equations in \eqref{eq:sym_ee} gives the following identity.

\begin{align*}
& 3 \alpha (\gamma_k^2 \sqrt{n} \hat{w}_{lk} + \gamma_k \sqrt{n} \hat{r}_{kl}) + 4 (1 - \alpha) (\beta_k \kappa_k \sqrt{n} \hat{w}_{lk} + \kappa_k \sqrt{n} \hat{q}_{kl} + 3 \kappa_k \sqrt{n} \hat{w}_{kl}) \\
= & 3 \alpha (\gamma_l^2 \sqrt{n} \hat{w}_{kl} + \gamma_l \sqrt{n} \hat{r}_{lk}) + 4 (1 - \alpha) (\beta_l \kappa_l \sqrt{n} \hat{w}_{kl} + \kappa_l \sqrt{n} \hat{q}_{lk} + 3 \kappa_l \sqrt{n} \hat{w}_{lk}) + o_P(1),
\end{align*}
from which the asymptotic result then follows using the second identity of Lemma \ref{lem:app_diag}.
\end{proof}

\begin{proof}[Proof of Theorem \ref{theo:diag}]
Evaluating $\textbf{C}^{3i}$ at $\textbf{x}_{st} = \textbf{U}^T\textbf{z}$ yields
\begin{align*}
\textbf{C}^{3i}(\textbf{x}_{st}) &= \textbf{U}^T E \left[ (\textbf{z}^T \textbf{U} \textbf{e}_i) \textbf{z} \textbf{z}^T \right] \textbf{U} = \textbf{U}^T \left( \sum_{k=1}^p u_{ki} \textbf{C}^{3k}(\textbf{z}) \right) \textbf{U},
\end{align*}
which proves the theorem for $\textbf{C}^3(\textbf{x}_{st})$ also. Similarly, after some simplification, we have for fourth joint cumulants
\begin{align*}
\textbf{C}^{4ij}(\textbf{x}_{st}) &= \textbf{U}^T \left( \sum_{k,l}^p u_{ki} u_{lj} \textbf{B}^{kl}(\textbf{z}) - \delta_{ij} \textbf{I}_p - \textbf{u}_i \textbf{u}_j^T - \textbf{u}_j \textbf{u}_i^T \right) \textbf{U} \\
&= \textbf{U}^T \left( \sum_{k=1}^p u_{ki} u_{kj} \kappa_k \textbf{E}^{kk} \right) \textbf{U},
\end{align*}
where $\textbf{u}_k$ are columns of $\textbf{U}$. This then gives the result for $\textbf{C}^4(\textbf{x}_{st})$ also.
\end{proof}

\begin{proof}[Proof of Corollary \ref{cor:fobi_xor}]
Observe first, that without loss of generality, we may in both cases assume that the non-zero weight is equal to 1. Starting with the third cumulants, we have under the independent component model $\textbf{C}^{3i}(\textbf{z}) = \gamma_i \textbf{E}^{ii}$. Then from the proof of Theorem \ref{theo:diag} we have that
\[\textbf{C}^{3i}(\textbf{x}_{st}^*) = \textbf{C}^{3i}(\textbf{U}^* \textbf{U}^T \textbf{z}) = \textbf{U}^* \textbf{U}^T \textbf{D}_i \textbf{U} \textbf{U}^{*T},\]
where the matrices $\textbf{D}_i$, $i=1,...,p$ are diagonal. This in turn implies that the matrix $\textbf{C}^3(\textbf{x}_{st}^*)$ has
\begin{align*}
\textbf{C}^3(\textbf{x}_{st}^*) &= \sum_{i=1}^p \textbf{C}^{3i}(\textbf{x}_{st}^*) = \textbf{U}^* \textbf{U}^T \left( \sum_{i=1}^p \textbf{D}_i \right) \textbf{U} \textbf{U}^{*T},
\end{align*}
the last line of which is the eigendecomposition (diagonalization) of the matrix $\textbf{C}^3(\textbf{x}_{st}^*)$. Thus choosing $\alpha = 1$ in the optimization problem of Definition \ref{def:fobi_func} leads to this same diagonalization and gives the transformation $\textbf{x}_{st}^* \mapsto \textbf{U} \textbf{U}^{*T} \textbf{x}_{st}^* = \textbf{z}$.

For the corresponding proof for fourth moments and the FOBI-matrix, $E\left[ \textbf{x}{}^*_{st} \textbf{x}{}^{*T}_{st} \textbf{x}{}^*_{st} \textbf{x}{}^{*T}_{st} \right]$, denote $\textbf{U}^* \textbf{U}^T = \textbf{V} \in \mathcal{U}$ and see for example \cite{miettinen2014fourth}. The same result for $\textbf{C}^4(\textbf{x}{}^*_{st}) = E\left[ \textbf{x}{}^*_{st} \textbf{x}{}^{*T}_{st} \textbf{x}{}^*_{st} \textbf{x}{}^{*T}_{st} \right] - (p+2) \textbf{I}_p$ then instantly follows
\end{proof}




\begin{proof}[Proof of Theorem \ref{theo:fobi_asymp}]
We begin by proving the consistency of the estimator. Again we first need to show that the sample objective function converges uniformly in probability to the corresponding population statistic over $\mathcal{U}$. For both the compound cumulant and multiple cumulant methods the objective functions are of the form
\[D(\textbf{U}) = \sum_{j=1}^J w_j \sum_{d=1}^p (\textbf{u}^T_d\textbf{M}_j\textbf{u}_d)^2 \quad \text{and} \quad D_n(\textbf{U}) = \sum_{j=1}^J w_j \sum_{d=1}^p (\textbf{u}^T_d\hat{\textbf{M}}_j\textbf{u}_d)^2, \]
where $\{\textbf{M}_j\}_{j=1}^J$ and $\{\hat{\textbf{M}}_j\}_{j=1}^J$ are the sets of matrices to be diagonalized and their sample versions, and $w_j$ are their respective positive weights. It is thus sufficient to consider individual supremums of the form
\[S(\textbf{M}, \hat{\textbf{M}}) = \underset{\textbf{u}^T\textbf{u} = 1}{\text{sup}}|(\textbf{u}^T\textbf{M}\textbf{u})^2 - (\textbf{u}^T\hat{\textbf{M}}\textbf{u})^2|,\]
where $\textbf{M}$ and $\hat{\textbf{M}}$ are the population and sample version of an arbitrary matrix to be diagonalized and thus satisfy $\hat{\textbf{M}} \rightarrow_P \textbf{M}$. The assumptions on the finiteness of moments further ensures that $\| \textbf{M} + \hat{\textbf{M}} \|$ converges in probability to some finite constant and we thus have
\begin{align*}
S(\textbf{M}, \hat{\textbf{M}}) &= \underset{\textbf{u}^T\textbf{u} = 1}{\text{sup}} \left( |\textbf{u}^T (\textbf{M} + \hat{\textbf{M}}) \textbf{u}| \cdot |\textbf{u}^T (\textbf{M} - \hat{\textbf{M}}) \textbf{u}| \right) \\
&\leq \underset{\textbf{u}^T\textbf{u} = 1}{\text{sup}} \left( \| \textbf{u}^T \| \| \textbf{M} + \hat{\textbf{M}} \| \| \textbf{u} \| \right) \underset{\textbf{u}^T\textbf{u} = 1}{\text{sup}} \left( \| \textbf{u}^T \| \| \textbf{M} - \hat{\textbf{M}} \| \| \textbf{u} \| \right)\\
&= \| \textbf{M} + \hat{\textbf{M}} \| \| \textbf{M} - \hat{\textbf{M}} \| \\
&\rightarrow_P 0.
\end{align*}

Using then the obtained result, $\text{sup}_{\textbf{U} \in \mathcal{U}}| D(\textbf{U}) - D_n(\textbf{U}) | \rightarrow_P 0$, the consistency of the estimator, that is $\mathbb{P}( \| \hat{\textbf{W}} - \textbf{I}_p \| < \epsilon) \rightarrow 1, \, \forall \epsilon > 0$, is proven similarly as in \citet{miettinen2014separation}.

Next, concerning the asymptotic behavior of $\hat{\textbf{W}}$, we again without loss of generality assume $\boldsymbol{\Omega} = \textbf{I}_p$, and then for diagonal elements it again suffices to use Lemma \ref{lem:app_diag}. To find the asymptotic behavior of the off-diagonal elements of $\hat{\textbf{W}}$ we in turn utilize the following lemma from the supplementary material of \cite{miettinen2014fourth}.

\begin{aplemma}\label{lem:app_multi}
Assume that $\hat{\textbf{S}}_k, k=1,...,K$ are $p \times p$ matrices such that $\sqrt{n}(\hat{\textbf{S}}_k - \boldsymbol{\Lambda}_k)$ are asymptotically normal with mean zero and $\boldsymbol{\Lambda}_k = diag(\lambda_{k1},...,\lambda_{kp})$. Let $\hat{\textbf{U}} = (\hat{\textbf{u}}_1,...,\hat{\textbf{u}}_p)^T$ be the orthogonal matrix that maximizes
\[\sum_{k=1}^K \| diag(\hat{\textbf{U}} \hat{\textbf{S}}_k \hat{\textbf{U}}{}^T) \|^2. \]
Then
\[\sqrt{n} \hat{u}_{ij} = \frac{\sum_{k=1}^K(\lambda_{ki} - \lambda_{kj})\sqrt{n} [\hat{\textbf{S}}_k]_{ij}}{\sum_{k=1}^K(\lambda_{ki} - \lambda_{kj})^2} + o_P(1), \quad i \neq j.\]
\end{aplemma}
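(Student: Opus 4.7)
The plan is to treat this as a fairly standard delta-method / linearization argument for an $M$-estimator defined by joint approximate diagonalization. The objective $\sum_{k}\|\mathrm{diag}(\textbf{U}\hat{\textbf{S}}_k\textbf{U}^T)\|^2$ evaluated at the population $\boldsymbol{\Lambda}_k$ (all diagonal) is maximized by $\textbf{U}=\textbf{I}_p$, up to sign changes and permutations; we fix the branch so that $\hat{\textbf{U}}\to_P \textbf{I}_p$. The rest is to derive the first-order estimating equations, expand each ingredient to order $n^{-1/2}$, and read off the claim.

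First I would derive the estimating equations exactly as in Sections \ref{subsec:fobi} and \ref{subsec:jade}: writing the Lagrangian with the orthonormality constraints $\hat{\textbf{u}}_i^T\hat{\textbf{u}}_j=\delta_{ij}$, differentiating with respect to $\hat{\textbf{u}}_i$, taking the inner product with $\hat{\textbf{u}}_j$, and equating the two resulting symmetric expressions for $\lambda_{ij}$ gives, for every $i\ne j$,
\begin{equation*}
\sum_{k=1}^K \bigl(\hat{\textbf{u}}_i^T\hat{\textbf{S}}_k\hat{\textbf{u}}_i\bigr)\,\hat{\textbf{u}}_j^T\hat{\textbf{S}}_k\hat{\textbf{u}}_i
=\sum_{k=1}^K \bigl(\hat{\textbf{u}}_j^T\hat{\textbf{S}}_k\hat{\textbf{u}}_j\bigr)\,\hat{\textbf{u}}_i^T\hat{\textbf{S}}_k\hat{\textbf{u}}_j .
\end{equation*}
This is the analogue of Lemma \ref{lem:estimeq} and is what the proof will linearize.

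Next I would expand each piece around $\hat{\textbf{U}}=\textbf{I}_p$, writing $\hat{u}_{lm}=\delta_{lm}+\epsilon_{lm}$ with $\epsilon_{lm}=O_P(n^{-1/2})$, and using $\hat{\textbf{S}}_k=\boldsymbol{\Lambda}_k+O_P(n^{-1/2})$ with $[\hat{\textbf{S}}_k]_{ml}=O_P(n^{-1/2})$ for $m\ne l$. A direct bookkeeping then yields
\begin{equation*}
\hat{\textbf{u}}_i^T\hat{\textbf{S}}_k\hat{\textbf{u}}_i=\lambda_{ki}+O_P(n^{-1/2}),\qquad
\hat{\textbf{u}}_j^T\hat{\textbf{S}}_k\hat{\textbf{u}}_i=[\hat{\textbf{S}}_k]_{ij}+\hat{u}_{ij}\lambda_{kj}+\hat{u}_{ji}\lambda_{ki}+o_P(n^{-1/2}),
\end{equation*}
since all remaining products are $O_P(n^{-1})$. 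The orthogonality constraint applied at the $(i,j)$ entry of $\hat{\textbf{U}}\hat{\textbf{U}}^T=\textbf{I}_p$ gives $\hat{u}_{ij}+\hat{u}_{ji}=O_P(n^{-1})$, so $\sqrt{n}\,\hat{u}_{ji}=-\sqrt{n}\,\hat{u}_{ij}+o_P(1)$.

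Finally, plugging these expansions into the estimating equation, using symmetry of $\hat{\textbf{S}}_k$, and collecting the $O_P(n^{-1/2})$ terms gives
\begin{equation*}
\sum_{k=1}^K(\lambda_{ki}-\lambda_{kj})\,\sqrt{n}\,[\hat{\textbf{S}}_k]_{ij}
=\sqrt{n}\,\hat{u}_{ij}\sum_{k=1}^K(\lambda_{ki}-\lambda_{kj})^2+o_P(1),
\end{equation*}
which, under the implicit identifiability condition that $\sum_k(\lambda_{ki}-\lambda_{kj})^2>0$ for $i\ne j$, gives the claimed representation. The main obstacle is really just careful accounting during the linearization to ensure that all genuinely $O_P(n^{-1})$ cross terms (products of two off-diagonal $\epsilon_{lm}$'s, or of an off-diagonal $\epsilon_{lm}$ with an off-diagonal $[\hat{\textbf{S}}_k]_{rs}$) are discarded, and that the $\hat{u}_{ji}$ contribution is correctly converted to $-\hat{u}_{ij}$; once this is done, the algebra collapses cleanly. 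The consistency $\hat{\textbf{U}}\to_P\textbf{I}_p$ needed at the start follows from uniform convergence of the objective function over the compact group $\mathcal{U}$, analogous to the argument sketched at the beginning of the proof of Theorem \ref{theo:fobi_asymp}.
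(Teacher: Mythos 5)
Your argument is correct, and it is essentially the standard linearization that this lemma rests on; note, however, that the paper itself does not prove Lemma \ref{lem:app_multi} at all but imports it from the supplementary material of \cite{miettinen2014fourth}, so there is no in-paper proof to compare against. Your route --- estimating equations from the Lagrangian, expansion of $\hat{\textbf{u}}_j^T\hat{\textbf{S}}_k\hat{\textbf{u}}_i$ and $\hat{\textbf{u}}_i^T\hat{\textbf{S}}_k\hat{\textbf{u}}_i$ around $\textbf{I}_p$, and the antisymmetry $\sqrt{n}\,\hat{u}_{ji}=-\sqrt{n}\,\hat{u}_{ij}+o_P(1)$ from orthogonality --- is exactly the pattern the paper uses in its own proofs of Theorems \ref{theo:sep_asymp} and \ref{theo:sym_asymp}, and the algebra checks out: substituting the expansions into $\sum_k(\hat{\textbf{u}}_i^T\hat{\textbf{S}}_k\hat{\textbf{u}}_i-\hat{\textbf{u}}_j^T\hat{\textbf{S}}_k\hat{\textbf{u}}_j)\hat{\textbf{u}}_i^T\hat{\textbf{S}}_k\hat{\textbf{u}}_j=0$ yields precisely the stated representation. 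Two hypotheses you use are only implicit in the lemma and worth flagging: the matrices $\hat{\textbf{S}}_k$ must be symmetric (otherwise $[\hat{\textbf{S}}_k]_{ij}$ would have to be replaced by the symmetrized entry), and $\sum_k(\lambda_{ki}-\lambda_{kj})^2>0$ for all $i\neq j$ is needed both for the denominator and for $\textbf{I}_p$ to be an identifiable maximizer up to signed permutation; both hold in all of the paper's applications.
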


Due to the weighting, the matrices we diagonalize are in terms of Lemma \ref{lem:app_multi} now actually $\sqrt{\alpha} \hat{\textbf{C}}{}^3$ and $\sqrt{1 - \alpha} \hat{\textbf{C}}{}^4$. Write then $\hat{\textbf{V}} = \hat{\textbf{U}}{}^* \hat{\textbf{S}}{}^{-1/2}$, so the estimated unmixing matrix has the form $\hat{\textbf{W}} =  \hat{\textbf{U}} \hat{\textbf{V}}$ and because  $\sqrt{n} (\hat{\textbf{U}}{}^* - \textbf{I}_p) = \mathcal{O}_P(1)$, also $\sqrt{n} (\hat{\textbf{V}} - \textbf{I}_p) = \mathcal{O}_P(1)$ holds.  Based on the orthogonality of $\hat{\textbf{U}}{}^*$ and the results of Lemma \ref{lem:app_diag}, we then have the following two equalities for $k\neq l=1,...,p$.
\begin{align*}
\sqrt{n} \hat{w}_{kl} &= \sqrt{n} \hat{u}_{kl} + \sqrt{n} \hat{v}_{kl} + o_P(1), \\
-\sqrt{n} \hat{s}_{kl} &= \sqrt{n} \hat{v}_{kl} + \sqrt{n} \hat{v}_{lk} + o_P(1)
\end{align*}

Using then Lemmas \ref{lem:app_diag} and \ref{lem:app_multi}, the above and the fact that $\textbf{C}^3(\textbf{z})$ and $\textbf{C}^4(\textbf{z})$ are diagonal we have for the compound cumulant method
\[\sqrt{n}\hat{w}_{kl} = \frac{\alpha \delta_{1kl}\sqrt{n}\left(\hat{\textbf{C}}{}^3_{kl} + \delta_{1kl}\hat{v}_{kl}\right) + (1 - \alpha) \delta_{2kl}\sqrt{n} \left(\hat{\textbf{C}}{}^4_{kl} + \delta_{2kl}\hat{v}_{kl}\right)}{\alpha \delta_{1kl}^2 + (1 - \alpha) \delta_{2kl}^2} + o_P(1), \]
where $\delta_{1kl} = (\gamma_k - \gamma_l)$ and $\delta_{2kl} = (\kappa_k - \kappa_l)$. By slightly modifying the proof of Theorem 8 in \cite{miettinen2014fourth} we can get the behavior of the FOBI-matrix $\hat{\textbf{B}}$ with the standardization functional $\hat{\textbf{V}}$, namely

\[\sqrt{n}(\hat{\textbf{B}} - \boldsymbol{\Lambda}) = \sqrt{n} \left( \hat{\textbf{V}} \frac{1}{n} \sum_{i=1}^n \left[\tilde{\textbf{z}}_i \tilde{\textbf{z}}_i^T  \hat{\textbf{V}}{}^T \hat{\textbf{V}} \tilde{\textbf{z}}_i \tilde{\textbf{z}}_i^T  \right] \hat{\textbf{V}}{}^T - \boldsymbol{\Lambda} \right) ,\]
where $\boldsymbol{\Lambda} = diag(\kappa_1,...,\kappa_p) + (p+2) \textbf{I}_p$ and the inner mean, denoted in the following by $\hat{\textbf{T}}$, converges in probability to the same constant as the matrix $\hat{\textbf{S}}_4$ in the proof of Theorem 8 in \cite{miettinen2014fourth}, namely, to $\boldsymbol{\Lambda}$. We hence have
\[\sqrt{n}(\hat{\textbf{B}} - \boldsymbol{\Lambda}) = \sqrt{n}(\hat{\textbf{V}} - \textbf{I}_p) \boldsymbol{\Lambda} + \boldsymbol{\Lambda} \sqrt{n}(\hat{\textbf{V}}{}^T - \textbf{I}_p) + \sqrt{n}(\hat{\textbf{T}} - \boldsymbol{\Lambda}) + o_P(1), \]
where an arbitrary off-diagonal element of the last term is
\[\frac{1}{n} \sum_{i=1}^n \tilde{z}_{ik} \tilde{\textbf{z}}{}_i^T \sqrt{n} (\hat{\textbf{V}}{}^T \hat{\textbf{V}} - \textbf{I}_p) \tilde{\textbf{z}}{}_i \tilde{z}_{il} + \frac{1}{\sqrt{n}} \sum_{i=1}^n \tilde{z}{}_{ik} \tilde{\textbf{z}}{}_i^T \tilde{\textbf{z}}{}_i \tilde{z}{}_{il}.\]
For the behavior of the latter sum we consult \cite{miettinen2014fourth} and for the first sum, expanding it gives
\[\frac{1}{n} \sum_{i=1}^n \tilde{z}_{ik} \tilde{\textbf{z}}{}_i^T \sqrt{n} (\hat{\textbf{V}}{}^T \hat{\textbf{V}} - \textbf{I}_p) \tilde{\textbf{z}}{}_i \tilde{z}_{il} = \sqrt{n}(\hat{\textbf{V}}{}^T \hat{\textbf{V}} - \textbf{I}_p)_{kl} + \sqrt{n}(\hat{\textbf{V}}{}^T \hat{\textbf{V}} - \textbf{I}_p)_{lk} + o_P(1),\]

where the matrix $\sqrt{n} (\hat{\textbf{V}}{}^T \hat{\textbf{V}} - \textbf{I}_p)$ can be further expanded as $\sqrt{n} (\hat{\textbf{V}}{}^T - \textbf{I}_p) + \sqrt{n}( \hat{\textbf{V}} - \textbf{I}_p) + o_P(1)$. Putting then everything together we have for an off-diagonal element of the FOBI-matrix $\hat{\textbf{B}}$ (and consequently for an off-diagonal element of $\hat{\textbf{C}}{}^4 = \hat{\textbf{B}} - (p+2) \textbf{I}_p$) that
\begin{align*}
\sqrt{n} \hat{b}_{kl} &= \sqrt{n}\hat{q}_{kl} + \sqrt{n}\hat{q}_{lk} + \sum_{m \neq k,l}^p \sqrt{n} \hat{q}_{mkl}\\
 &+ (\kappa_l + p + 4) \sqrt{n} \hat{v}_{kl} + (\kappa_k + p + 4) \sqrt{n} \hat{v}_{lk} + o_P(1).
\end{align*}

In terms of Lemma \ref{lem:app_multi} we then get
\begin{align*}
\sqrt{n} \left(\hat{\textbf{C}}{}^4_{kl} + \delta_{2kl}\hat{v}_{kl}\right) &= \sqrt{n}\hat{q}_{kl} + \sqrt{n}\hat{q}_{lk} + \sum_{m \neq k,l}^p \sqrt{n} \hat{q}_{mkl}\\
 &+ (\kappa_k + p + 4) (\sqrt{n} \hat{v}_{kl} + \sqrt{n} \hat{v}_{lk}) + o_P(1),
\end{align*}
where the effect of the standardization functional $\hat{\textbf{V}}$ vanishes as $\sqrt{n} \hat{v}_{kl} + \sqrt{n} \hat{v}_{lk} = -\sqrt{n} \hat{s}_{kl} + o_P(1)$.

For the corresponding result for the matrix $\hat{\textbf{C}}{}^3$ we first define some notation. Let $\hat{\textbf{V}} \rightarrow_P \textbf{I}_p$ denote an arbitrary standardization matrix and let
\begin{align*}
\hat{\textbf{H}}_k = \hat{\textbf{H}}_k(\hat{\textbf{V}}) &:= \frac{1}{n}\sum_{i=1}^n (\tilde{\textbf{z}}{}_i^T \hat{\textbf{V}}{}^T \textbf{e}_k) \tilde{\textbf{z}}{}_i \tilde{\textbf{z}}{}_i^T = \gamma_k \textbf{E}^{kk} + o_P(1), \\
\hat{N}_{klm} &:= \frac{1}{n}\sum_{i=1}^n \tilde{z}_{ik} \tilde{z}_{il} \tilde{z}_{im} = \delta_{km} \delta_{lm} \gamma_m + o_P(1),
\end{align*}
where $\hat{N}_{klm}$ additionally satisfies $\sqrt{n} \hat{N}_{klm} = (1/\sqrt{n}) \sum_{i=1}^n z_{ik} z_{il} z_{im} - \delta_{kl} \sqrt{n} \bar{z}_m - \delta_{km} \sqrt{n} \bar{z}_l - \delta_{lm} \sqrt{n} \bar{z}_k + o_P(1)$. Using then the above and expanding each of the matrices $\hat{\textbf{V}}$ as $\hat{\textbf{V}} = (\hat{\textbf{V}} - \textbf{I}_p) + \textbf{I}_p$ we write
\begin{align*}
\hat{\textbf{C}}{}^3 &= (\hat{\textbf{V}} - \textbf{I}_p) \left( \sum_{k=1}^p \hat{\textbf{H}}_k \right) (\hat{\textbf{V}}{}^T - \textbf{I}_p) + (\hat{\textbf{V}} - \textbf{I}_p) \left( \sum_{k=1}^p \hat{\textbf{H}}_k \right)  \\
&+ \left( \sum_{k=1}^p \hat{\textbf{H}}_k \right) (\hat{\textbf{V}}{}^T - \textbf{I}_p) + \sum_{k=1}^p \hat{\textbf{H}}_k.
\end{align*}
Using Slutsky's theorem this further yields
\begin{align*}
\sqrt{n} (\hat{\textbf{C}}{}^3 - \sum_{k=1}^p \gamma_k \textbf{E}^{kk}) & =  \sum_{k=1}^p \sqrt{n} (\hat{\textbf{V}} - \textbf{I}_p) \gamma_k \textbf{E}^{kk} + \sum_{k=1}^p \gamma_k \textbf{E}^{kk} \sqrt{n} (\hat{\textbf{V}}{}^T - \textbf{I}_p) \\
&+  \sum_{k=1}^p \sqrt{n} (\hat{\textbf{H}}_k - \gamma_k \textbf{E}^{kk})  + o_P(1).
\end{align*}
Inspecting the result element-wise and using for the last sum the expansion $(\hat{\textbf{H}}_m)_{kl} = \sum_{u=1}^p \hat{v}_{mu} \hat{N}_{klu}$, it easily follows that the element $(k,l)$ of $\sqrt{n}\hat{\textbf{C}}{}^3, \, k \neq l$, satisfies
\[\sqrt{n} \hat{\textbf{C}}{}^3_{kl} = \gamma_l \sqrt{n} \hat{v}_{kl} + \gamma_k \sqrt{n} \hat{v}_{lk} + \sum_{m=1}^p \sum_{u=1}^p \sqrt{n} \hat{v}_{mu} \hat{N}_{klu} + o_P(1). \]

The term consisting of the double sum can further be expanded as
\[\sum_{m,u}\sqrt{n}(\hat{v}_{mu}-\delta_{mu})\delta_{ku} \delta_{lu} \gamma_u + \sum_m \sqrt{n} \hat{N}_{klm} + o_P(1),\]
the first sum of which vanishes as $k \neq l$, leaving only the second sum, which after simplifying has the form
\[\sqrt{n} \hat{r}_{kl} + \sqrt{n} \hat{r}_{lk} + \sum_{m \neq k,l} \sqrt{n} \hat{r}_{mkl}. \]
We then have in terms of Lemma \ref{lem:app_multi}
\[\sqrt{n}\left(\hat{\textbf{C}}{}^3_{kl} + \delta_{1kl}\hat{v}_{kl}\right) = \sqrt{n}\hat{r}_{kl} + \sqrt{n}\hat{r}_{lk} + \sum_{m \neq k,l} \sqrt{n} \hat{r}_{mkl} + \gamma_k (\sqrt{n} \hat{v}_{kl} + \sqrt{n} \hat{v}_{lk}) + o_P(1), \]
where the effect of $\hat{\textbf{V}}$ again vanishes giving then the desired result.

Note, that in the proof we made no assumption whatsoever on the origin of the orthogonal matrix $\hat{\textbf{U}}{}^*$ and thus any choice of IC functional in the standardization leads to the same asymptotic behavior for the estimate $\hat{\textbf{W}}$.
\end{proof}

\begin{proof}[Proof of Theorem \ref{lem:jade_ae}]
As with the affine equivariance of the compound cumulant method in Definition \ref{def:fobi_func}, we again carry out the proof by showing that the optimization problem in Definition \ref{def:jade_func} is invariant under mappings $\textbf{x}_{st} \mapsto \textbf{V}\textbf{x}_{st}$, where $\textbf{V} \in \mathcal{U}$.

We first divide the objective function in two parts
\[D(\textbf{U}, \textbf{x}_{st}) = \alpha D_3(\textbf{U}, \textbf{x}_{st}) + (1 - \alpha) D_4(\textbf{U}, \textbf{x}_{st}),\]
where $D_3(\textbf{U}, \textbf{x}_{st}) = \sum_{i=1}^p \| diag(\textbf{U} \textbf{C}^{3i}(\textbf{x}_{st}) \textbf{U}^T)  \|^2$ denotes the part based on third cumulants and $D_4(\textbf{U}, \textbf{x}_{st}) =  \sum_{i=1}^p \sum_{j=1}^p \| diag(\textbf{U} \textbf{C}^{4ij}(\textbf{x}_{st}) \textbf{U}^T) \|^2$ respectively the part based on fourth cumulants. From the proof of Theorem 9 in \cite{miettinen2014fourth} we have that $D_4(\textbf{U}, \textbf{V} \textbf{x}_{st}) = D_4(\textbf{UV}, \textbf{x}_{st})$ and to complete the proof we thus need the analogical result for $D_3$.

From the proof of Theorem \ref{theo:diag} we see that
\begin{align*}
\textbf{C}^{3i}(\textbf{V}\textbf{x}_{st}) &= \sum_{k=1}^p v_{ik} \textbf{V} E \left[ (\textbf{x}_{st}^T \textbf{e}_k) \textbf{x}_{st} \textbf{x}_{st}^T \right] \textbf{V}^T \\
&= \sum_{k=1}^p v_{ik} \textbf{V} \textbf{C}^{3k}(\textbf{x}_{st}) \textbf{V}^T.
\end{align*}
Denoting $\textbf{W} = (\textbf{w}_1,...,\textbf{w}_p)^T := \textbf{UV}$ and substituting into $D_3(\textbf{U}, \textbf{V}\textbf{x}_{st})$ we then have
\begin{align*}
D_3(\textbf{U}, \textbf{V}\textbf{x}_{st}) &= \sum_{i=1}^p \sum_{d=1}^p \left( \textbf{u}_d^T \left(\sum_{k=1}^p v_{ik} \textbf{V} \textbf{C}^{3k}(\textbf{x}_{st}) \textbf{V}^T \right) \textbf{u}_d \right)^2 \\
&= \sum_{i=1}^p \sum_{d=1}^p \sum_{k=1}^p \sum_{k'=1}^p v_{ik} v_{ik'} \textbf{w}_d^T \textbf{C}^{3k}(\textbf{x}_{st}) \textbf{w}_d \textbf{w}_d^T \textbf{C}^{3k'}(\textbf{x}_{st}) \textbf{w}_d \\
&= \sum_{d=1}^p \sum_{k=1}^p \sum_{k'=1}^p \textbf{w}_d^T \textbf{C}^{3k}(\textbf{x}_{st}) \textbf{w}_d \textbf{w}_d^T \textbf{C}^{3k'}(\textbf{x}_{st}) \textbf{w}_d \left( \sum_{i=1}^p  v_{ik} v_{ik'} \right) \\
&= \sum_{d=1}^p \sum_{k=1}^p \left( \textbf{w}_d^T \textbf{C}^{3k}(\textbf{x}_{st}) \textbf{w}_d \right)^2 \\
&= D_3(\textbf{UV}, \textbf{x}_{st}).
\end{align*}
Combining this with the result for $D_4$ we have thus shown that $D(\textbf{U}, \textbf{V}\textbf{x}_{st}) = D(\textbf{UV}, \textbf{x}_{st})$.
\end{proof}

\begin{proof}[Proof of Theorem \ref{theo:jade_asymp}]
For the consistency of the estimator $\hat{\textbf{W}}$, see the proof of Theorem \ref{theo:fobi_asymp}.

The asymptotic behavior of diagonal elements is covered by Lemma \ref{lem:app_diag} and for the off-diagonal elements we use Lemma \ref{lem:app_multi} which, noting that $\textbf{C}^{3i}(\textbf{z}) = \gamma_i \textbf{E}^{ii}$ and $\textbf{C}^{4ij}(\textbf{z}) = \delta_{ij} \kappa_i \textbf{E}^{ii}$, in conjunction with Lemma \ref{lem:app_diag} now gives
\[\sqrt{n} \hat{\textbf{w}}_{kl} = \frac{\alpha M_3  + (1 - \alpha) M_4}{\alpha (\gamma_k^2 + \gamma_l^2) + (1 - \alpha) (\kappa_k^2 + \kappa_l^2)} - \frac{1}{2}\sqrt{n}\hat{s}_{kl} + o_P(1),\]
where $M_3 = \gamma_k \sqrt{n} \hat{\textbf{C}}{}^{3k}_{kl} - \gamma_l \sqrt{n} \hat{\textbf{C}}{}^{3l}_{kl}$ and $M_4 = \kappa_k \sqrt{n} \hat{\textbf{C}}{}^{4kk}_{kl} - \kappa_l \sqrt{n} \hat{\textbf{C}}{}^{4ll}_{kl}$. Notice again, that as in the proof of Theorem \ref{theo:fobi_asymp}, we again apply Lemma \ref{lem:app_multi} to matrices scaled by the square roots of the weights. We obtain the behavior of fourth cumulants from the proof of theorem in \cite{miettinen2014fourth}.
\begin{align*}
M_4 -\frac{1}{2}(\kappa_k^2 + \kappa_l^2) \sqrt{n} \hat{s}_{kl} = \kappa_k \sqrt{n} \hat{q}_{kl} - \kappa_l \sqrt{n} \hat{q}_{lk} - (\kappa \beta_k - 3 \kappa_l) \sqrt{n} \hat{s}_{kl} + o_P(1).
\end{align*}

To derive the counterpart for third cumulants we again denote the standardization matrix $\hat{\textbf{S}}{}^{1/2}$ by $\hat{\textbf{V}} \rightarrow_P \textbf{I}_p$. With a technique similar to the one used for matrix $\hat{\textbf{C}}{}^3$ in the proof of Theorem \ref{theo:fobi_asymp} we get
\begin{align*}
\sqrt{n} (\hat{\textbf{C}}{}^{3k} - \gamma_k \textbf{E}^{kk}) & = \sqrt{n} (\hat{\textbf{V}} - \textbf{I}_p) \gamma_k \textbf{E}^{kk} + \gamma_k \textbf{E}^{kk} \sqrt{n} (\hat{\textbf{V}}{}^T - \textbf{I}_p)\\
 &+  \sqrt{n} (\hat{\textbf{H}}_k - \gamma_k \textbf{E}^{kk})  + o_P(1).
\end{align*}
Inpsecting the equation element-wise and again using the fact that $(\hat{\textbf{H}}_m)_{kl} = \sum_{u=1}^p \hat{v}_{mu} \hat{N}_{klu}$ (see the proof of Theorem \ref{theo:fobi_asymp}) we then get
\[\sqrt{n}\hat{\textbf{C}}{}^{3k}_{kl} = \sqrt{n}\hat{\textbf{C}}{}^{3k}_{lk} = \gamma_k \sqrt{n} \hat{v}_{lk} + \sqrt{n} \hat{N}_{kkl} + o_P(1), \]
which further yields
\[M_3 -\frac{1}{2}(\gamma_k^2 + \gamma_l^2) \sqrt{n} \hat{s}_{kl} = \gamma_k \sqrt{n} \hat{r}_{kl} - \gamma_l \sqrt{n} \hat{r}_{lk} - \gamma_k^2 \sqrt{n} \hat{s}_{kl} + o_P(1). \]
\end{proof}

\bibliographystyle{plainnatb}
\bibliography{new_references}


\end{document}